\numberwithin{equation}{section}
\newtheorem{theorem}{Theorem}[section]
\newtheorem*{theorem*}{Theorem}
\newtheorem{corollary}[theorem]{Corollary}
\newtheorem{lemma}[theorem]{Lemma}
\newtheorem{proposition}[theorem]{Proposition}
\theoremstyle{definition}
\theoremstyle{definition}
\newtheorem{remark}[theorem]{Remark}
\newtheorem{definition}[theorem]{Definition}
\newtheorem{definition lemma}[theorem]{Definition/Lemma}
\newcommand{\C}{\mathcal{C}}
\newcommand{\R}{\mathbb{R}}
\newcommand{\Z}{\mathbb{Z}}
\newcommand{\N}{\mathbb{N}}
\renewcommand{\SS}{\mathbb{S}}
\newcommand{\n}{\mathfrak{n}}
\newcommand{\m}{\mathfrak{m}}
\renewcommand{\P}{\mathbb{P}}
\newcommand{\ud}{\mathrm{d}}
\newcommand{\sizen}{\mathrm{Size}^{\mathfrak{n}}}
\newcommand{\energyn}{\mathrm{Energy}^{\mathfrak{n}}}
\renewcommand{\top}[1]{\mathrm{Top}(#1)}
\newcommand{\VarC}[1]{\mathrm{Var}^{#1}\mathscr{C}}
\newcommand{\ec}[1]{\mathrm{ec}(#1)}
\author{Fr\'{e}d\'{e}ric Bernicot \and Marco Vitturi}
\address{CNRS - Universit\'e de Nantes \\ Laboratoire Jean
Leray \\ 2, rue de la Houssini\`ere
44322 Nantes cedex 3, France}
\email{frederic.bernicot@univ-nantes.fr \and marco.vitturi@univ-nantes.fr }
\title[Bilinear Rubio de Francia for rectangles]{A bilinear Rubio de Francia inequality for arbitrary rectangles}
\begin{document}

%
%
\begin{abstract}
Let $\mathscr{R}$ be a collection of disjoint dyadic rectangles $R$, let $\pi_R$ denote the non-smooth bilinear projection onto $R$
\[ \pi_R (f,g)(x):=\iint \mathds{1}_{R}(\xi,\eta) \widehat{f}(\xi) \widehat{g}(\eta) e^{2\pi i (\xi + \eta) x} \ud \xi \ud\eta \]
and let $r>2$. We show that the bilinear Rubio de Francia operator associated to $\mathscr{R}$ given by
\[ f,g \mapsto \Big(\sum_{R\in\mathscr{R}} |\pi_{R} (f,g)|^r \Big)^{1/r} \]
is $L^p \times L^q \rightarrow L^s$ bounded whenever $1/p + 1/q = 1/s$, $r'<p,q<r$. This extends from squares to rectangles a previous result by the same authors in \cite{BernicotVitturi}, and as a corollary extends in the same way a previous result from \cite{BeneaBernicot} for smooth projections, albeit in a reduced range.
\end{abstract}

\maketitle
%
%
%
\section{Introduction}
In this paper we present an improvement over the results of Benea and the first author in \cite{BeneaBernicot} and the results of \cite{BernicotVitturi} by the same authors. The setup is as follows.\\ 
Let $\mathcal{D}$ denote the standard dyadic grid, that is the collection of intervals of the form $[2^k n, 2^k(n+1)]$ for $n,k \in \Z$. Let then $\mathscr{R} \subset \mathcal{D} \times \mathcal{D}$ be an arbitrary collection of disjoint rectangles $R = R_1 \times R_2$ with $R_1, R_2$ dyadic, and let $\pi_R$ denote the non-smooth bilinear frequency projection on $R$, that is 
\[ \pi_R (f,g)(x) := \iint \widehat{f}(\xi) \widehat{g}(\eta) \mathds{1}_{R}(\xi,\eta) e^{2\pi i (\xi + \eta) x} \ud \xi \ud \eta. \]
We will consider the bilinear operators associated to the collection $\mathscr{R}$ given by 
\[  T_{\mathscr{R}}^r(f,g)(x) := \Big(\sum_{R\in\mathscr{R}} |\pi_R(f,g)(x)|^r\Big)^{1/r}  \]
for $r>2$.
\par
Before stating our results we provide some context. Linear variants of the operator $T_{\mathscr{R}}^r$ are known in the literature as Rubio de Francia operators. Indeed, given a collection of arbitrary disjoint intervals $\mathcal{I} = \{I_n, n \in \Z\}$, Rubio de Francia proved in \cite{RubioDeFrancia} that the operator 
\[ \mathrm{RdF}^r_{\mathcal{I}}f(x) := \Big(\sum_{n} \Big|\int \widehat{f}(\xi) \mathds{1}_{I_n}(\xi) e^{2\pi i \xi x} \ud \xi\Big|^r \Big)^{1/r} \]
is $L^p \to L^p$ bounded for $r' < p < \infty$ when $r \geq 2$, this last condition being necessary (a consequence of Khinchin's inequality). Notice that when $r = 2$ then $\mathrm{RdF}^2_{\mathcal{I}}$ is also bounded at the endpoint $r' = 2$, this endpoint being just a consequence of Plancherel's identity, but when $r > 2$ the condition $r' < p$ is sharp (see \cite{TaoCowling}). Rubio de Francia's result is a generalization of the well known Littlewood-Paley inequalities ($I_n = [2^n,2^{n+1}]$), extending an earlier result of Carleson \cite{Carleson_LP} and (independently) Cordoba \cite{Cordoba} for the collection $\mathcal{I} = \{[n,n+1], n \in\Z\}$; as such, it can be thought of as a statement about orthogonality for arbitrary frequency intervals. The result has also been extended and reproved by different means in a number of papers, see \cite{Bourgain,Journe,Sjoelin,Soria,Lacey_RdF,BeneaMuscalu_preprint}.\\
Bilinear operators of square-function type associated to collections of subsets of the frequency plane $\widehat{\R^2}$ have previously been considered as well. Perhaps the first such operator to have appeared in the literature is the square function 
\[ f,g \mapsto \Big(\sum_{n \in \Z} \Big|\iint\widehat{f}(\xi) \widehat{g}(\eta) \chi(\xi-\eta-n) e^{2\pi i (\xi + \eta) x} \ud \xi \ud \eta\Big|^2 \Big)^{1/2}, \]
where $\chi$ is a smooth bump function supported in $[-1/2, 1/2]$. Here the collection of subsets of $\widehat{\R^2}$ is evidently given by the strips $S_n:= \{(\xi,\eta) \text{ s.t. } |\xi - \eta - n| < 1/2\}$ for $n \in \Z$, and the bilinear frequency projections are given by a smooth bilinear multiplier. This square function was introduced by Lacey in \cite{Lacey_bilinearSqF}, in which he proved it is $L^p \times L^q \mapsto L^2$ bounded for $2 \leq p,q \leq \infty$ satisfying $1/p + 1/q = 1/2$. This range was later extended in \cite{MohantyShrivastava},\cite{BernicotShrivastava} to show that the operator is $L^p \times L^q \mapsto L^s$ bounded for $1/p + 1/q = 1/s$, $1 \leq s\leq 2$ and $2 \leq p,q, \leq \infty$, this last condition being sharp.\\
The operator admits a non-smooth variant given by 
\[ f,g \mapsto \Big(\sum_{n \in \Z} \Big|\iint\widehat{f}(\xi) \widehat{g}(\eta) \mathds{1}_{[-1/2,1/2]}(\xi-\eta-n) e^{2\pi i (\xi + \eta) x} \ud \xi \ud \eta\Big|^2 \Big)^{1/2}, \]
where the collection of subsets is again given by the strips $S_n$ but now the bilinear frequency projections are given by a non-smooth multiplier - specifically, each projection is essentially a modulated Bilinear Hilbert Transform. The non-smoothness makes the operator inherently harder to bound. The first author proved in \cite{Bernicot} that the operator is $L^p \times L^q \to L^s$ bounded for $2 < p,q < \infty$, $1 < s < 2$ and $1/p + 1/q = 1/s$. It should be remarked that the geometric regularity of the strips (in particular the fact that they have all the same width and separation from their neighbours) is fundamental to the proof, the case of arbitrary disjoint strips being an interesting open problem.\\
Another example of a bilinear square-function associated to subsets of $\widehat{\R^2}$ can be found in \cite{BeneaMuscalu}, where the authors considered the operator
\[ f,g \mapsto \Big( \sum_{n\in\Z} \Big|\iint\limits_{a_n < \xi < \eta < a_{n+1}} \widehat{f}(\xi) \widehat{g}(\eta) e^{2\pi i (\xi + \eta) x} \ud \xi \ud \eta \Big|^r\Big)^{1/r} \]
for $r\geq 1$, where $(a_n)_{n\in \Z}$ is a strictly increasing subsequence of reals (a smoother version of this operator was also originally considered in \cite{Benea_thesis}). They can be thought of as bilinear Rubio de Francia operators for iterated Fourier integrals, and arise naturally in the study of the stability of solutions to AKNS systems of differential equations (see \cite{MuscaluSchlag}). Here we can see that the subsets of the frequency space $\widehat{\R^2}$ consist of disjoint right triangles whose hypotenuses are aligned along the $\xi = \eta$ diagonal, and the bilinear multipliers are non-smooth. In \cite{BeneaMuscalu} it is proven that when $r\geq 2$ the operator is $L^p \times L^q \to L^s$ bounded in the same range in which the Bilinear Hilbert Transform is bounded, and when $1 \leq r < 2$ the operator is still bounded but in a range depending on $r$.\\
The final example of a bilinear operator of the above kind we provide is the closest to the operator $T^r_{\mathscr{R}}$. Consider a collection $\Omega$ of disjoint \emph{squares} $\omega$ with sides parallel to the axes in $\widehat{\R^2}$. Let $\chi$ denote a smooth bump function supported in $[-1/2,1/2]\times [-1/2,1/2]$ and let $\chi_\omega$ be the rescaling of $\chi$ with support $\omega$, namely 
\[ \chi_\omega(\xi, \eta) = \chi \Big( \frac{\xi - c(\omega_1)}{|\omega_1|}, \frac{\eta - c(\omega_2)}{|\omega_2|}\Big), \]
$c(I)$ denoting the center of the interval $I$. For $r \geq 2$ fixed, one can associate to the collection $\Omega$ the bilinear Rubio de Francia operator
\[ f,g \mapsto S^r_{\Omega}(f,g)(x) := \Big(\sum_{\omega \in \Omega} \Big|\iint\widehat{f}(\xi) \widehat{g}(\eta) \chi_\omega (\xi,\eta) e^{2\pi i (\xi + \eta) x} \ud \xi \ud \eta\Big|^r \Big)^{1/r}. \]
Thus the sets are now squares with sides parallel to the axes and the frequency projections are smooth. In \cite{BeneaBernicot} by Benea and the first author the following is proven.
\begin{theorem}[\cite{BeneaBernicot}]\label{theorem_smooth_squares}
Let $r > 2$ and let $\Omega$ be a collection of disjoint squares. Let $p,q,s$ be such that 
\[ \frac{1}{p} + \frac{1}{q} = \frac{1}{s} \]
and
\[ r' < p,q < \infty, \qquad r'/2 < s < r. \]
Then the operator $S^r_{\Omega}$ is $L^p \times L^q \to L^s$ bounded with constant independent of $\Omega$, that is for every $f \in L^p$, $g \in L^q$ the inequality
\[ \Big\|\Big(\sum_{\omega \in \Omega} \Big|\iint\widehat{f}(\xi) \widehat{g}(\eta) \chi_\omega (\xi,\eta) e^{2\pi i (\xi + \eta) x} \ud \xi \ud \eta\Big|^r \Big)^{1/r}\Big\|_{L^s} \lesssim_{p,q} \|f\|_{L^p} \|g\|_{L^q} \]
holds.
\end{theorem}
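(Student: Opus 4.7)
The plan is as follows. Since $\chi$ is a Schwartz bump supported in $[-1/2,1/2]^2$, a Fourier series expansion on a slightly larger cube produces rapidly decaying coefficients that reduce the problem to the tensor-product case $\chi(\xi,\eta)=\chi_1(\xi)\chi_2(\eta)$. In this reduced setting the bilinear projection \emph{factorises} pointwise,
\[ \pi_\omega(f,g)(x) = P_{\omega_1}f(x)\cdot P_{\omega_2}g(x), \]
where $P_I$ is the smooth Littlewood--Paley projection adapted to $I$. This factorisation is the defining feature of the smooth case and is precisely what lets us avoid the time--frequency machinery needed for the non-smooth version treated in the rest of the paper.

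Next we regroup by first coordinate: let $\mathcal{I}_1 := \{\omega_1 : \exists\,\omega_2,\, \omega_1\times\omega_2\in\Omega\}$ and, for $I\in\mathcal{I}_1$, set $\Omega_2(I) := \{J : I\times J\in\Omega\}$. The key structural observation is that for each fixed $I$, the family $\Omega_2(I)$ is a \emph{disjoint} collection of intervals on the second frequency axis, by disjointness of $\Omega$. Hence
\[ S^r_\Omega(f,g)(x) = \Big(\sum_{I\in\mathcal{I}_1} |P_I f(x)|^r\, G_I(g)(x)^r\Big)^{1/r}, \quad G_I(g) := \Big(\sum_{J\in\Omega_2(I)}|P_J g|^r\Big)^{1/r}, \]
and Rubio de Francia's theorem applied to $\Omega_2(I)$ gives $\|G_I(g)\|_{L^q}\lesssim\|g\|_{L^q}$ \emph{uniformly in $I$} for $r'<q<\infty$, with the associated vector-valued extensions supplied by the $A_p$-weighted theory of smooth Littlewood--Paley operators.

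The heart of the proof is then to combine these $I$-wise estimates across $\mathcal{I}_1$. The tempting strategy of splitting the target via bilinear Hölder into independent $L^p(\ell^{r_1}_{\omega_1})$ and $L^q(\ell^{r_2}_{\omega_2})$ norms (with $1/r_1+1/r_2=1/r$ and $1/p+1/q=1/s$) \emph{fails}, because $\mathcal{I}_1$ need not be a disjoint family: stacked squares in $\Omega$ produce identical first projections, and in general nested $\omega_1$'s are permitted as long as the companion $\omega_2$'s remain disjoint. Separating the two sides therefore destroys the multiplicity control encoded by the disjointness of $\Omega$. The correct strategy keeps the two sides entangled: dualise the outer $\ell^r$ sum by pairing against $c:\R\to\ell^{r'}_{\mathcal{I}_1}$ with $\|c(x)\|_{\ell^{r'}}\le 1$, and reduce to bounding a linear-in-$f$ expression of the form $\sum_{I\in\mathcal{I}_1} P_I f(x)\,\gamma_I(x)$ with $\gamma_I(x) := c_I(x)\,G_I(g)(x)$, where the constraints on the pair $(c_I,\gamma_I)$ couple the Rubio de Francia estimate on each $\Omega_2(I)$ to the overlap structure of $\mathcal{I}_1$.

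The main obstacle is therefore to prove a \emph{correlated} weighted vector-valued paraproduct / Littlewood--Paley estimate for the partially-overlapping family $\mathcal{I}_1$ that simultaneously absorbs the stacking multiplicities of $\Omega$. Once this correlated estimate is in hand, bilinear interpolation inside the claimed region $r'<p,q<\infty$, $r'/2<s<r$ (whose lower boundary $s=r'/2$ is the bilinear reflection of the linear Rubio de Francia endpoint $p=r'$) finishes the argument.
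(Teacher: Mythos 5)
This theorem is not proved in the present paper at all: it is Theorem 1.1, cited verbatim from \cite{BeneaBernicot}. The paper only records it as background and explicitly attributes its proof to a time--frequency argument built on \emph{columns} (trees) together with a localisation device borrowed from \cite{BeneaMuscalu}, a localisation the authors emphasize is what gives the larger range $r'<p,q<\infty$, $r'/2<s<r$ and is \emph{not} available in the non-smooth setting of Theorem 1.2. The paper also stresses that both Theorems 1.1 and 1.2 ``rely essentially on the sets in $\Omega$ being squares.'' Your proposal takes a genuinely different route, through factorisation $\pi_\omega(f,g)=P_{\omega_1}f\cdot P_{\omega_2}g$ and regrouping by the first frequency coordinate $\mathcal{I}_1$. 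That reduction to a tensor-product bump is fine, and your observation that $\Omega_2(I)$ is disjoint for each fixed $I$, together with the diagnosis that $\mathcal{I}_1$ itself need not be disjoint (stacked and nested $\omega_1$'s are permitted), is accurate and is indeed the right place to locate the difficulty.

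The problem is that this is where your argument stops being a proof. After dualising the outer $\ell^r$ sum and arriving at $\sum_{I}P_I f\cdot \gamma_I$, you declare that the ``main obstacle is therefore to prove a correlated weighted vector-valued paraproduct / Littlewood--Paley estimate for the partially-overlapping family $\mathcal{I}_1$,'' and then assert ``once this correlated estimate is in hand, bilinear interpolation finishes the argument.'' That correlated estimate is not stated precisely, is not proved, and is not even reduced to anything standard. It is not a routine vector-valued extension: because distinct $I\in\mathcal{I}_1$ can be nested with arbitrary multiplicity, the usual one-parameter Rubio de Francia machinery and its $A_p$-weighted extensions do not control $\sum_I P_I f\,\gamma_I$ unless you simultaneously exploit that the companions $\Omega_2(I)$ are disjoint \emph{and} that each $\omega$ is a square, and this coupling is precisely what the column/localisation apparatus of \cite{BeneaBernicot} is built to do. Moreover, your plan makes no use of the single-scale (square) structure beyond passing it to $\Omega_2(I)$, whereas the cited proof uses it crucially; without that, the factorised approach cannot plausibly recover the full range up to $s=r'/2$. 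As written, then, the proposal identifies the obstacle but does not overcome it, and the crucial step is a gap rather than a lemma.
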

The condition $p,q > r'$ is necessary, as can be seen by considering the collection of squares given by $\Omega = \{ [n,n+1] \times [m,m+1] \text{ s.t. } m, n \in \Z \}$ (indeed, in this case the bilinear operator factorizes into linear smooth Rubio de Francia operators and the necessity follows from the linear case as by \cite{TaoCowling}). The condition $1/p + 1/q = 1/s$ is equally necessary, as can be seen from a simple rescaling argument.\\
In \cite{BernicotVitturi} we extended the above result for squares to the case of non-smooth frequency projections, albeit in a smaller range. 
\begin{theorem}[\cite{BernicotVitturi}]\label{theorem_nonsmooth_squares}
Let $r > 2$ and let $\Omega \subset \mathcal{D}\times \mathcal{D}$ be a collection of disjoint squares with dyadic sides. Let $p,q,s$ be such that 
\[ \frac{1}{p} + \frac{1}{q} = \frac{1}{s} \]
and
\[ r' < p,q < r, \qquad r'/2 < s < r/2. \]
Then the operator $T^r_{\Omega}$ is $L^p \times L^q \to L^s$ bounded with constant independent of $\Omega$, that is for every $f \in L^p$, $g \in L^q$ the inequality
\[ \Big\|\Big(\sum_{\omega \in \Omega} \Big|\iint\widehat{f}(\xi) \widehat{g}(\eta) \mathds{1}_\omega (\xi,\eta) e^{2\pi i (\xi + \eta) x} \ud \xi \ud \eta\Big|^r \Big)^{1/r}\Big\|_{L^s} \lesssim_{p,q} \|f\|_{L^p} \|g\|_{L^q} \]
holds.
\end{theorem}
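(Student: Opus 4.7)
The point of departure is the tensor-product factorisation of the indicator of a rectangle: for a dyadic square $\omega = \omega_1 \times \omega_2$,
\[
\pi_\omega(f,g)(x) = P_{\omega_1}f(x) \cdot P_{\omega_2}g(x),
\]
where $P_I$ denotes the sharp linear frequency cutoff on the interval $I$. Matters thus reduce to the vectorial estimate
\[
\Big\|\Big(\sum_{\omega\in\Omega} |P_{\omega_1}f|^r\, |P_{\omega_2}g|^r\Big)^{1/r}\Big\|_{L^s} \lesssim \|f\|_{L^p} \|g\|_{L^q}.
\]
I would apply H\"older in the summation variable (splitting the $\ell^r$-norm via $\ell^{r\alpha}\cdot\ell^{r\beta}$ with $1/\alpha + 1/\beta = 1$) followed by H\"older in $L^s$, to reduce to two vector-valued linear Rubio de Francia-type estimates: one for $f$ indexed by the first sides $\{\omega_1\}$ and one for $g$ indexed by the second sides $\{\omega_2\}$.

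The immediate obstruction is that, although $\{\omega\}$ is a disjoint family of squares, the collections $\{\omega_1\}$ and $\{\omega_2\}$ need not consist of pairwise disjoint intervals---repetitions and nestings across scales are possible. To cure this I would stratify $\Omega$ by dyadic scale, $\Omega_k := \{\omega \in \Omega : |\omega_1| = 2^k\}$; inside each $\Omega_k$ both projections form disjoint collections of dyadic intervals of common length $2^k$, and the linear Rubio de Francia inequality applies on the nose. This yields a clean single-scale estimate in the claimed range; the genuine work lies in combining the single-scale estimates across scales.

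The technical heart is then the combination of the across-scale summation with the non-smoothness of the sharp cutoffs $P_I$. I would handle both via time-frequency analysis: write each $P_I$ as a combination of modulated Hilbert transforms using the standard identity that relates sharp frequency cutoffs to $H$, so that $T^r_\Omega$ becomes a superposition of modulated bilinear Hilbert transforms indexed by the corners of the squares in $\Omega$. Vector-valued $\ell^r$ estimates for such superpositions can be attacked in the Muscalu-Tao-Thiele tritile framework via wave packet decomposition, tree selection, and size/energy estimates, with the disjointness of the squares ensuring that the trees associated to different corners have controlled overlap. The main obstacle I anticipate is obtaining size/energy estimates of sufficient strength to sum across scales while preserving the $\ell^r$ structure; the narrower range $p, q < r$, $s < r/2$, strictly smaller than in the smooth case of Theorem~\ref{theorem_smooth_squares}, reflects precisely the limited $L^p$-range of vector-valued bilinear Hilbert transform bounds.
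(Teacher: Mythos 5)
Your opening observation --- that $\pi_\omega(f,g)=P_{\omega_1}f\cdot P_{\omega_2}g$ --- is correct and is indeed the starting point in the paper, but the factorisation you build on top of it has two fatal gaps.

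First, the H\"older splitting of the $\ell^r$-sum cannot produce two linear Rubio de Francia operators, because the family $\{\omega_1\}_{\omega\in\Omega}$ is very far from being a disjoint collection of intervals, and stratifying by scale does not fix this. Many disjoint squares in $\Omega$ of the same side length can share the same first side: $[0,1]\times[b,b+1]$ for varying $b\in\Z$ are all disjoint, all have $|\omega_1|=1$, yet all have $\omega_1=[0,1]$. Consequently, your claim that ``inside each $\Omega_k$ both projections form disjoint collections of dyadic intervals'' is false, and the ostensible single-scale quantity $\sum_{\omega\in\Omega_k}|P_{\omega_1}f|^{r\alpha}$ has uncontrolled multiplicity --- it can be infinite even for Schwartz $f$. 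The $\ell^r$-sum genuinely couples the two projections and cannot be decoupled by H\"older alone; this is exactly why the trilinear form $\widetilde\Lambda^r_\Omega$ is introduced and treated as an irreducible object.

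Second, the rewriting of $T^r_\Omega$ as a ``superposition of modulated bilinear Hilbert transforms'' misidentifies the singularity. The symbol of $\pi_\omega$ is $\mathds{1}_{\omega_1}(\xi)\,\mathds{1}_{\omega_2}(\eta)$, a tensor product whose singular set consists of axis-parallel lines; writing each $\mathds{1}_I$ in terms of $\mathrm{sgn}$ produces products of two \emph{independent} modulated linear Hilbert transforms, not a diagonal singularity $\mathrm{sgn}(\xi-\eta)$. Each individual $\pi_\omega$ is therefore a trivially bounded (degenerate) bilinear multiplier; the difficulty lies entirely in summing over $\omega$, i.e.\ it is an orthogonality problem, not a singular-integral problem. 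The Muscalu--Tao--Thiele tritile machinery is designed for the genuine diagonal singularity of BHT and does not apply as stated.

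The actual argument in \cite{BernicotVitturi} (mirrored in the present paper for rectangles) is quite different. One dualises to a trilinear form $\Lambda^r_\Omega(f,g,\mathbf h)$ with $\mathbf h\in L^{s'}(\ell^{r'})$, discretises into tiles, and organises the tiles into \emph{columns} (rather than trees). The non-smoothness is handled through the pointwise domination $T^\infty_\Omega(f,g)\lesssim \mathscr C f\cdot\mathscr C g$ by products of Carleson operators at $r=\infty$, and through the variational Carleson operator $\VarC{r}$ in the column size/energy estimates for $r<\infty$. One first proves restricted weak-type bounds only along the edges $p=r$ or $q=r$, and then the full range $r'<p,q<r$ is recovered by Silva's vector-valued complex interpolation (Lemma~\ref{lemma_interpolation}) between the $r<\infty$ partial estimates and the $r=\infty$ case --- which is precisely where the constraint $p,q<r$ enters, not from any bilinear Hilbert transform limitation.
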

\begin{remark}
First of all, notice that the condition $r'/2 < s < r/2$ is redundant in the above statement: it's a consequence of the other two conditions on $p,q,s$.\\
Secondly, the larger range in Theorem \ref{theorem_smooth_squares} was obtained by means of a localisation argument (originally introduced in \cite{BeneaMuscalu}) which is unavailable in the non-smooth case.
\end{remark}
It should be remarked that the proofs of Theorems \ref{theorem_smooth_squares} and \ref{theorem_nonsmooth_squares} both rely essentially on the sets in $\Omega$ being squares.
\par 
In this paper we will extend the result above to the case where the collection consists of arbitrary dyadic \emph{rectangles} instead, again taking the frequency projections to be non-smooth. Precisely, our main result can be stated as follows.
\begin{theorem}\label{main_theorem} 
Let $r>2$ and let $\mathscr{R} \subset \mathcal{D} \times \mathcal{D}$ be a collection of disjoint dyadic \emph{rectangles} in $\widehat{\R^2}$. Let $p,q,s$ be such that 
\[ \frac{1}{p} + \frac{1}{q} = \frac{1}{s} \]
and such that 
\[ r' < p,q < r, \qquad r'/2 < s < r/2. \]
Then the operator $T^r_{\mathscr{R}}$ is $L^p \times L^q \rightarrow L^s$ bounded with constant independent of $\mathscr{R}$, that is for every $f \in L^p$, $g \in L^q$ the inequality
\[ \Big\|\Big(\sum_{R\in\mathscr{R}} |\pi_R(f,g)(x)|^r\Big)^{1/r} \Big\|_{L^s} \lesssim_{p,q} \|f\|_{L^p} \|g\|_{L^q} \]
holds.
\end{theorem}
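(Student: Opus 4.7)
The approach is to extend the time-frequency analysis of Theorem~\ref{theorem_nonsmooth_squares} from square tiles to anisotropic rectangular ones, exploiting a product decomposition available for non-smooth projections. The key structural simplification is that $\mathds{1}_{R_1\times R_2}(\xi,\eta)=\mathds{1}_{R_1}(\xi)\mathds{1}_{R_2}(\eta)$, so the bilinear projection factors as
\[ \pi_R(f,g)(x) = P_{R_1}f(x)\cdot P_{R_2}g(x), \]
where $P_I$ denotes the linear non-smooth frequency projection onto the dyadic interval $I$. Consequently
\[ T^r_{\mathscr{R}}(f,g)(x)^r = \sum_{R\in\mathscr{R}} |P_{R_1}f(x)|^r\,|P_{R_2}g(x)|^r, \]
and the two input functions decouple in a way that is not available in the smooth setting (this decoupling is also what will allow the smooth corollary to be deduced).

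The first step is to replace each non-smooth frequency projection $P_I$ by its smooth wave packet expansion at scale $|I|$, absorbing the non-smooth tails into a bilinear Hilbert transform–type correction handled by the standard Lacey--Thiele machinery. The product $P_{R_1}f\cdot P_{R_2}g$ then becomes a model sum indexed by \emph{anisotropic} bi-tiles $(P_1,P_2)$, where $P_j$ has spatial scale $1/|R_j|$ and frequency support contained in $R_j$. The analysis then proceeds along the lines of \cite{BernicotVitturi}: decompose the bi-tiles into trees, introduce $r$-variant sizes and energies tailored to the outer $\ell^r$ sum over $\mathscr{R}$, and obtain Bessel-type orthogonality from the disjointness of the rectangles in the frequency plane.

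The main obstacle, absent from the squares case, is the mismatch of spatial scales $1/|R_1|$ and $1/|R_2|$ within a single bi-tile. To pair wave packets of $f$ and $g$ one must coarse-grain the finer to the scale of the coarser, and the resulting losses have to be controlled uniformly in the aspect ratios present in $\mathscr{R}$. My plan is to decompose $\mathscr{R}$ by aspect ratio $k=\log_2(|R_1|/|R_2|)$, reduce each anisotropy class by a dilation to a geometry formally analogous to the squares case, and reassemble using the linear non-smooth Rubio de Francia inequality in the coarse-grained variable in order to sum in $k$. Making this summation yield a constant independent of $\mathscr{R}$, and ensuring that the $r$-variant size and energy bounds survive the passage from isotropic to anisotropic tiles, is where I expect the bulk of the technical effort to lie.
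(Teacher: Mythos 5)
Your proposal correctly identifies the factorization $\pi_R(f,g)=P_{R_1}f\cdot P_{R_2}g$ and the presence of two different spatial scales as the central difficulty. However, there are two serious gaps that make the plan, as described, unlikely to close.

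First, the aspect-ratio rescaling. You propose to split $\mathscr{R}$ by $k=\log_2(|R_1|/|R_2|)$ and "reduce each anisotropy class by a dilation to the squares geometry." The bilinear form evaluates $P_{R_1}f$ and $P_{R_2}g$ at the \emph{same} point $x$, and the two frequency variables $\xi$ and $\eta$ cannot be dilated independently without breaking this. Concretely, writing $P_{R_2}g(x)=P_{2^{-k}R_2}[g(2^k\cdot)](2^{-k}x)$ decouples the spatial arguments and you no longer have a rectangle--projection trilinear form of the original type. There is no obvious dilation that maps the anisotropic problem onto the isotropic one while preserving the pairing with $h_R$. Even setting that aside, you would then have to sum over $k\in\Z$ with constants that are uniform, and nothing in the proposal supplies that summability; applying Theorem~\ref{theorem_nonsmooth_squares} to each class separately gives a constant that grows with the number of classes present. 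In the paper's argument the two scales are kept simultaneously inside a single tile structure (a "super tile" carrying a nested family of "small tiles"), and the mismatch is paid for by a shifting parameter $\n$ incurring only $\log^+(\n)$ losses, which are then dominated by the rapid decay of the wave packets.

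Second — and more fundamentally — you appear to aim at proving the full range $r'<p,q<r$ by a single time-frequency/tree-decomposition argument. The paper does not do this and there are indications it is not possible: even in the squares case no direct tile argument reaching the endpoint-neighborhood $L^2\times L^2\to L^1$ is known (the $r=2$ case is open). The paper's time-frequency argument only produces restricted weak type estimates on the boundary lines $q=r$ (for high-eccentricity rectangles) and $p=r$ (for low-eccentricity ones), and these are then \emph{interpolated against the trivial $r=\infty$ case} using Silva's vector-valued complex interpolation lemma to fill out the full $r'<p,q<r$ range. That interpolation in the outer exponent $r$ is a load-bearing element of the proof, not an afterthought, and it is absent from your plan. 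Related to this, you propose introducing "$r$-variant sizes and energies" for all three functions; the paper deliberately introduces sizes and energies only for $f$ and $\mathbf{h}$, controlling $g$ by a pointwise bound against a variational Carleson operator $\VarC{r}g$ — this asymmetric treatment is what enables the restricted weak type estimate at $q=r$. You would need to rediscover both the $r$-interpolation step and the asymmetry in the stopping times to make the argument go through.

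A smaller point: the reduction "replace each non-smooth $P_I$ by a smooth wave packet expansion and absorb the tails into a BHT-type correction" is not what the paper does, and it is unclear that the residual term is of BHT type or that the Lacey--Thiele argument would yield bounds summable over the $\ell^r$ index $R\in\mathscr{R}$. The paper instead keeps the non-smooth projections and discretizes the trilinear form directly by averaging over time intervals at the two scales $1/|R_1|$ and $1/|R_2|$.
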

A standard argument shows that we have as a corollary that the same result holds for smooth frequency projections instead, but this time with the added benefit of allowing \emph{arbitrary} disjoint rectangles, that is with sides not necessarily dyadic in any way (but still parallel to the axes, of course). 
\begin{corollary}\label{corollary_smooth_rectangles}
Let $r>2$ and let $\mathscr{R}$ be a collection of disjoint arbitrary rectangles in $\widehat{\R^2}$ with sides parallel to the axes. Let $p,q,s$ be such that 
\[ \frac{1}{p} + \frac{1}{q} = \frac{1}{s} \]
and such that 
\[ r' < p,q < r, \qquad r'/2 < s < r/2. \]
Then the operator $S^r_{\mathscr{R}}$ is $L^p \times L^q \rightarrow L^s$ bounded with constant independent of $\mathscr{R}$, that is for every $f \in L^p$, $g \in L^q$ the inequality
\[ \Big\|\Big(\sum_{R \in \mathscr{R}} \Big|\iint\widehat{f}(\xi) \widehat{g}(\eta) \chi_R (\xi,\eta) e^{2\pi i (\xi + \eta) x} \ud \xi \ud \eta\Big|^r \Big)^{1/r} \Big\|_{L^s} \lesssim_{p,q} \|f\|_{L^p} \|g\|_{L^q} \]
holds, where $\chi_R$ denotes
\[\chi_R(\xi, \eta) = \chi\Big(\frac{\xi - c(R_1)}{|R_1|},\frac{\eta - c(R_2)}{|R_2|}\Big).\]
\end{corollary}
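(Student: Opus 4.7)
The strategy is the standard Fourier-series reduction: I would deduce the smooth corollary from Theorem~\ref{main_theorem} by expanding each smooth bump $\chi_R$ into a rapidly convergent sum of modulated non-smooth indicators and bounding each ``mode'' with polynomial loss in the modulation parameters, then summing against the rapid decay of the coefficients.

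First I would expand the fixed bump $\chi$ in a $2$D Fourier series. Since $\chi$ is smooth and compactly supported in $(-1/2, 1/2)^2$, its periodic extension is smooth, so
\[
\chi(\alpha, \beta) = \sum_{(n, m) \in \Z^2} c_{n, m}\, e^{2 \pi i (n \alpha + m \beta)}, \qquad |c_{n, m}| \lesssim_N (1 + |n| + |m|)^{-N}
\]
for all $N$. Rescaling by $\alpha = (\xi - c(R_1))/|R_1|$ and $\beta = (\eta - c(R_2))/|R_2|$, and using $\chi_R = \chi_R \mathds{1}_R$, one gets
\[
\chi_R(\xi, \eta) = \sum_{n, m \in \Z} c^R_{n, m}\, e^{2 \pi i n \xi / |R_1|}\, e^{2 \pi i m \eta / |R_2|}\, \mathds{1}_R(\xi, \eta), \qquad |c^R_{n, m}| = |c_{n, m}|.
\]
Since a frequency modulation is a translation in physical space, the $(n, m)$-mode of the smooth operator equals $\pi_R(\tau_{-n/|R_1|} f, \tau_{-m/|R_2|} g)(x)$, where $\tau_h$ denotes translation by $h$. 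Minkowski's inequality in $\ell^r$ (valid since $r > 1$) and in $L^s$ then yields
\[
\|S^r_{\mathscr{R}}(f, g)\|_{L^s} \leq \sum_{n, m} |c_{n, m}|\, \Big\|\Big(\sum_{R \in \mathscr{R}} \bigl|\pi_R(\tau_{-n/|R_1|} f, \tau_{-m/|R_2|} g)\bigr|^r\Big)^{1/r}\Big\|_{L^s},
\]
so it suffices to bound the inner norm, for every $(n, m)$, by $(1 + |n| + |m|)^A \|f\|_{L^p} \|g\|_{L^q}$ with $A$ independent of $(n, m)$ and $\mathscr{R}$.

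Two reductions are required for this inner estimate. To pass from arbitrary to dyadic rectangles, I would cover each $R$ by a shifted dyadic rectangle $\widetilde{R}$ in one of the $9$ shifted $2$D dyadic grids with $R \subset \widetilde{R}$ and $|\widetilde{R}_i| \leq 3 |R_i|$, and pigeonhole into the $9$ subcollections so that all $\widetilde{R}$ come from a single shifted grid, for which Theorem~\ref{main_theorem} applies verbatim by translation invariance of its proof. To absorb the $R$-dependent shifts, I would invoke the Plancherel--Polya pointwise estimate $|F(x + h)| \lesssim (1 + L|h|) M F(x)$, valid for functions $F$ with Fourier support in an interval of length $L$; applied to $F = P_{R_i}(\cdot)$ with $h = n/|R_i|$ (so $L|h| = |n|$), this gives
\[
|\pi_R(\tau_{-n/|R_1|} f, \tau_{-m/|R_2|} g)(x)| \lesssim (1 + |n|)(1 + |m|)\, M(P_{R_1} f)(x)\, M(P_{R_2} g)(x).
\]
The Fefferman--Stein vector-valued maximal inequality, combined with Theorem~\ref{main_theorem} applied at the base case $(n, m) = (0, 0)$, then gives the required polynomial bound, which is finally summed against $\sum_{n, m}|c_{n, m}|(1 + |n| + |m|)^A < \infty$ (taking $N > A + 2$).

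\emph{Main obstacle.} The hardest point is that disjoint rectangles $R$ can produce \emph{nested} dyadic coverings $\widetilde{R}$, even within a single shifted grid, so Theorem~\ref{main_theorem} cannot be invoked directly on $\{\widetilde{R}: R \in \mathscr{R}\}$. I would overcome this by a Whitney-type stratification of $\mathscr{R}$ according to the dyadic nesting depth of $\widetilde{R}$: at each depth one extracts disjoint subfamilies of $\widetilde{R}$'s, applies Theorem~\ref{main_theorem} on each, and sums the contributions with a controlled overhead that is comfortably absorbed by the polynomial loss $(1 + |n| + |m|)^A$ and the rapid Fourier decay of $c_{n, m}$.
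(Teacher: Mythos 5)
Your approach is genuinely different from the paper's and has a real gap at the Plancherel--Polya/Fefferman--Stein step. After the Fourier-series expansion, the $(n,m)$-mode applies $\pi_R$ to translates $\tau_{-n/|R_1|}f$, $\tau_{-m/|R_2|}g$ whose shift amounts are \emph{$R$-dependent and at two different scales}. Your pointwise Plancherel--Polya bound hands you $(1+|n|)(1+|m|)\,M(\pi_{R_1}f)\,M(\pi_{R_2}g)$, a \emph{product} of maximal functions of the two factors, each at its own $R$-dependent scale $|R_1|^{-1}$, $|R_2|^{-1}$. This is not of the form $M(F_R)$ for a single family $\{F_R\}$, and it is certainly not $M(\pi_R(f,g))$, so there is no invocation of Fefferman--Stein that reduces $\big\|\big(\sum_R (M\pi_{R_1}f)^r(M\pi_{R_2}g)^r\big)^{1/r}\big\|_{L^s}$ to the non-smooth estimate of Theorem~\ref{main_theorem}. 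That ``maximal-truncated'' bilinear Rubio de Francia inequality is a genuinely new statement, and proving it requires essentially re-running the whole time-frequency argument of the paper with the shifted averages (this is exactly what the shift parameter $\n$ and the non-local averages $\fint_{I^\n}\big|\cdot\big|$ in Section~\ref{section_reductions} are handling). Citing ``Fefferman--Stein + the base case $(n,m)=(0,0)$'' is circular: the two-scale $R$-dependent shifts are precisely the core difficulty of the paper. (Note also that for $s<1$, which occurs when $r$ is close to $2$, Fefferman--Stein would not even formally apply.)

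There is a secondary gap in your ``Whitney-type stratification of $\mathscr{R}$ according to dyadic nesting depth'': for arbitrary disjoint rectangles the nesting depth of their shifted-dyadic hulls can be unbounded (take e.g.\ $R^{(k)}$ of side length $\sim 2^{-k}$ accumulating at the origin, all overlapping a common dyadic ancestor), so splitting into anti-chains produces an overhead growing with $\mathscr{R}$. The paper avoids both issues: it never modulates. Instead it Whitney-decomposes the \emph{bump} $\chi$ (not the collection) into pieces $c_{jk}\chi_{jk}$ supported well inside $R$, reducing to $12$-separated sub-collections whose dyadic hulls are automatically disjoint, with the geometric decay of $c_{jk}$ absorbing the sum over pieces; then, after a windowed Fourier series to reduce to a tensor $\chi_1\otimes\chi_2$, it writes each $\chi_j$ as $\int\phi_j$ by the fundamental theorem of calculus, so the smooth projection becomes an integral (in auxiliary parameters $\zeta,\theta$) of \emph{sharp} projections $\pi_{R_{\zeta,\theta}}$ onto sub-rectangles of $R$, handled directly by the separated variant of Theorem~\ref{main_theorem_2}. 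This sidesteps $R$-dependent modulations, maximal functions, and nesting entirely.
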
 
The proof of the corollary is elementary but has nevertheless been included in Appendix \ref{appendix_A} for completeness.\\
%
%
The study of the boundedness of $T_{\mathscr{R}}^r(f,g)$ is reduced by duality to the study of the boundedness of the trilinear form
\begin{equation}\label{eqn:trilinear_form_1}  
\widetilde{\Lambda}^r_{\mathscr{R}}(f,g,h) := \langle T^{r}_{\mathscr{R}}(f,g), h \rangle,  
\end{equation}
which can then be further reduced to the study of the boundedness of the trilinear form 
\begin{equation}\label{eqn:trilinear_form}  
\Lambda^r_{\mathscr{R}}(f,g,\mathbf{h}) := \int_{\R} \sum_{R\in\mathscr{R}} \pi_{R}(f,g)(x) h_R(x) \ud x,  
\end{equation}
where $\mathbf{h} = \{h_R\}_{R \in \mathscr{R}}$ is a vector valued function, specifically taking values in $\ell^{r'}(\mathscr{R})$. Observe moreover that the frequency projection $\pi_R(f,g)$ factorizes as $\pi_{R_1} f \cdot \pi_{R_2} g$, where $R = R_1 \times R_2$ and $\pi_I$ denotes the multiplier given by $\widehat{\pi_I f} = \mathds{1}_I \widehat{f}$. \par
We will prove the boundedness result stated below for the trilinear form $\Lambda^r_{\mathscr{R}}$; Theorem \ref{main_theorem} then follows from the above remarks.
%
%
\begin{theorem}\label{main_theorem_2}
Let $r>2$ and let $\mathscr{R} \subset \mathcal{D} \times \mathcal{D}$ be a collection of disjoint dyadic \emph{rectangles} in $\widehat{\R^2}$. Let $p,q,s$ be such that 
\[ \frac{1}{p} + \frac{1}{q} = \frac{1}{s} \]
and such that 
\[ r' < p,q < r, \qquad r'/2 < s < r/2. \] 
Then for every $f \in L^p$, $g \in L^q$ and $\mathbf{h} \in L^{s'}(\ell^{r'})$ we have
\begin{equation}\label{eqn:main_estimate}
\Lambda^r_{\mathscr{R}}(f,g,\mathbf{h}) \lesssim_{r,p,q} \|f\|_{L^p} \|g\|_{L^q} \|\mathbf{h}\|_{L^{s'}(\ell^{r'})}.
\end{equation}
\end{theorem}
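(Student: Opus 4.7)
The strategy is to adapt the time-frequency analysis of \cite{BernicotVitturi} (for squares) to the biparameter setting that arbitrary rectangles force upon us. By symmetry I would partition $\mathscr{R}$ into the sub-families $\{R : |R_1| \geq |R_2|\}$ and $\{R : |R_1| < |R_2|\}$ and argue for each separately. Focusing on the first, note that $\pi_{R_1}f$ lives at spatial scale $1/|R_1|$ (fine), $\pi_{R_2}g$ at spatial scale $1/|R_2|$ (coarse), and that $\pi_R(f,g) = \pi_{R_1}f\cdot \pi_{R_2}g$ has Fourier support in $R_1 + R_2$, an interval of length $\sim |R_1|$; hence $h_R$ may be taken frequency-localized at the fine scale $|R_1|$ at no cost. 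After a standard wave packet discretization, the estimation of $\Lambda^r_{\mathscr{R}}$ reduces to a discrete trilinear model sum over tri-tiles $\vec{P} = (P^{(1)}, P^{(2)}, P^{(3)})$ satisfying Whitney-type frequency relations, with the spatial parts of $P^{(1)}$ and $P^{(3)}$ at scale $1/|R_1|$ and the spatial part of $P^{(2)}$ at the coarser scale $1/|R_2|$.

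Next I would organize the tri-tiles into biparameter trees. Each tree is indexed by a top spatial interval at the coarse scale together with a top rectangle in frequency, but within the tree the spatial variable is genuinely two-scale, with many fine sub-intervals nested inside each coarse one. I would then define appropriate biparameter sizes and energies for $f$, $g$, and the vector-valued $\mathbf{h}$ (the latter incorporating the $\ell^{r'}$-summation over $\mathscr{R}$), prove biparameter Carleson-type embeddings controlling these in terms of $\|f\|_{L^p}$, $\|g\|_{L^q}$, and $\|\mathbf{h}\|_{L^{s'}(\ell^{r'})}$, and conclude via the standard pigeonhole over size levels combined with a geometric summation, exactly as in \cite{BernicotVitturi}.

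The main technical obstacle is the genuine biparameter structure of the wave packet tiles, absent in the squares case: on the $f$ and $g$ sides the spatial scales $1/|R_1|$ and $1/|R_2|$ are independent, so the Carleson embeddings must exploit averages over rectangular spatial windows, invoking the strong (biparameter) maximal function rather than its one-dimensional counterpart. A further delicate point is the bookkeeping required to collect the $\ell^{r'}$-norm of $\mathbf{h}$ uniformly across rectangles of wildly different aspect ratio, which I would handle by a further dyadic grouping by aspect ratio $|R_1|/|R_2|$ combined with exponential-summation tricks so that the constants remain uniform in $\mathscr{R}$. The restriction $r' < p,q < r$ arises naturally from the linear Rubio de Francia inequality applied to each factor and matches the range of Theorem \ref{theorem_nonsmooth_squares}, with the range $r'/2 < s < r/2$ being a consequence via $1/p + 1/q = 1/s$.
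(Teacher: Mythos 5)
Your broad strategy---split $\mathscr{R}$ by aspect ratio, discretize, and run a stopping-time argument with sizes and energies---is in the right spirit, but it diverges from the paper's proof at several key points, and at least two of your steps are genuinely problematic.

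First, you propose a ``genuine biparameter'' wave packet analysis with biparameter trees and strong maximal function embeddings. The paper deliberately avoids this: it discretizes \emph{only} at the fine spatial scale $|R_2|^{-1}$, and accounts for the coarse $f$-scale by replacing the fine average of $\pi_{R_1}f$ with a sum of shifted coarse averages $\sum_\n (1+|\n|)^{-N} \fint_{I^\n}|\pi_{R_1}f|$. This produces a one-parameter model sum indexed by a shift $\n$, for which one does a single-parameter time-frequency analysis with shifted ($\n$-)columns, paying only a $(\log^+ \n)^{O(1)}$ loss (controlled by the rapidly decaying weight $(1+|\n|)^{-N}$). A candidly biparameter argument would run into the well-known obstruction that biparameter Bessel/energy estimates in the non-smooth setting are not available; the whole point of the shift device is to sidestep this.

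Second, you define a size and energy for each of $f,g,\mathbf{h}$ and plan to pigeonhole over all three. The paper defines sizes and energies only for $f$ and $\mathbf{h}$; the $g$-contribution on each $\n$-column is absorbed pointwise into an average of $(\VarC{r} g)^r$ (Lemma \ref{lemma_g_control}) and then fed into the exceptional set, precisely because the column structure gives disjoint ``shifted tiles'' $R_2(P)\times I^\n_P$ but the $|I_\rho|^{1-r/2}$ weights make a genuinely multiplicative size for $g$ unhelpful here. As a consequence the direct argument does \emph{not} optimize in $g$, and it only yields restricted weak type estimates on the line $q=r$ (for $\mathscr{R}_{\mathrm{high}}$) or $p=r$ (for $\mathscr{R}_{\mathrm{low}}$), not the full triangle $r'<p,q<r$.

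This is the crucial missing step in your proposal: the full range in Theorem \ref{main_theorem_2} is \emph{not} obtained directly from the time-frequency argument. It comes from Silva's vector-valued complex interpolation lemma (Lemma \ref{lemma_interpolation}), which interpolates the partial estimate on the line $q=r_1$ at each $r_1>2$ with the trivial $r=\infty$ estimate (pointwise domination by $\mathscr{C}f\cdot\mathscr{C}g$). Letting $r_1\downarrow 2$ fills out the range $r'<p,q<r$. Your sentence attributing the range $r'<p,q<r$ directly to the linear Rubio de Francia inequality factorwise is not how the range arises in this proof, and without the interpolation step your argument as written would only deliver estimates for exponents sitting on a single line in the $(1/p,1/q)$ plane.
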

%
%
%
%
Let $\ec{R}$ denote the eccentricity of the rectangle $R$, defined as 
\[ \ec{R} := \frac{|R_2|}{|R_1|}.\]
We split the collection $\mathscr{R}$ into two subcollections, according to whether the eccentricity is high or low:
\begin{align*}
\mathscr{R}_{\mathrm{high}} &:= \{R \in \mathscr{R} \text{ s.t. } \ec{R} >1 \}, \\ 
\mathscr{R}_{\mathrm{low}} &:= \{R \in \mathscr{R} \text{ s.t. } \ec{R} \leq 1 \};
\end{align*}
of course, the rectangles of eccentricity 1 are just squares and as such we could remove them from the collection since they give rise to a bounded operator, by Theorem \ref{theorem_nonsmooth_squares}. We then split the trilinear form into $\Lambda^r_{\mathscr{R}} = \Lambda^r_{\mathscr{R}_{\mathrm{high}}} + \Lambda^r_{\mathscr{R}_{\mathrm{low}}}$. We will prove Theorem \ref{main_theorem_2} separately for $\mathscr{R}_{\mathrm{high}}, \mathscr{R}_{\mathrm{low}}$, using a time-frequency analysis of the trilinear form, and since the proof will be symmetric in the two cases we will concentrate exclusively on $\mathscr{R}_{\mathrm{high}}$ (see however Remark \ref{remark_on_trilinear_form_eccentricity_ll_1} for further details on this symmetry).
%
%
\par
Let us briefly explain the structure of the proof by analogy with the linear case of Rubio de Francia operators $\mathrm{RdF}^r_{\mathcal{I}}$, where $\mathcal{I} = \{I_n, \; n \in \Z\}$ is a given collection of disjoint intervals. Recall that the inequality $\|\mathrm{RdF}^2_{\mathcal{I}} f\|_{L^2} \leq \|f\|_{L^2}$ is a trivial consequence of Plancherel's inequality; it can be rephrased in a vector-valued flavour as $\|(\pi_{I_n} f)_{n\in \Z}\|_{L^2(\ell^2)} \leq \|f\|_{L^2}$. On the other hand, observe that pointwise $|\pi_{I_n}f(x)| \leq 2 \mathscr{C}f(x)$ independently of $n$, where $\mathscr{C}$ denotes the Carleson operator
\[ \mathscr{C}f(x) := \sup_{N \in \R} \Big| \int\limits^{N}_{-\infty} \widehat{f}(\xi) e^{2\pi i \xi x} \ud \xi\Big|, \]
and therefore we have by the Carleson-Hunt theorem that $\|(\pi_{I_n} f)_{n\in \Z}\|_{L^p(\ell^\infty)} \lesssim_p \|f\|_{L^p}$ for any $1 < p < \infty$. By complex interpolation it follows immediately that
\[ \|\mathrm{RdF}^r_{\mathcal{I}} f\|_{L^p} \lesssim_{r,p} \|f\|_{L^p} \]
for $r' < p < r$. As stated before, the operator is also bounded in the range $p \geq r > 2$, but this is a consequence of the boundedness of $\mathrm{RdF}^2_{\mathcal{I}}$ in $p\geq 2$, and the proof of this fact requires methods other than simple interpolation.\\
In the proof of Theorem \ref{main_theorem} we have adopted a similar approach. Indeed, when $r = \infty$ the bilinear operator becomes 
\[ T^\infty_{\mathscr{R}}(f,g)(x) = \sup_{R \in \mathscr{R}} |\pi_R(f,g)(x)|, \]
and this is easily seen to be bounded pointwise by $\mathscr{C}f \cdot \mathscr{C}g$, in parallel to the linear case, which in turn yields the full range of boundedness for $T^\infty_{\mathscr{R}}$. Now, the natural bilinear analogue of the $L^2 \to L^2$ estimate would be $L^2 \times L^2 \to L^1$, and thanks to an interpolation result of Silva from \cite{Silva} (see Lemma \ref{lemma_interpolation}) Theorem \ref{main_theorem_2} would follow from such an estimate for $r = 2$. However, there is no equivalent of Plancherel's theorem in the bilinear world, and this estimate is not straightforward. We currently do not know if such an estimate holds, the $r=2$ case having resisted treatment in general even in the simpler case of squares and smooth frequency projections. As a replacement, we will prove preliminary $L^p \times L^q \to L^s$ estimates for $T^r_{\mathscr{R}}$ for each $r>2$, in a range that gets arbitrarily close to $L^2 \times L^2 \to L^1$ as $r \to 2$. This will be enough to allow us to run the interpolation argument to conclude boundedness in the range stated in Theorem \ref{main_theorem}. 
\begin{remark}
The overall proof structure explained above is the same the authors have adopted in the proof of Theorem \ref{theorem_nonsmooth_squares} in \cite{BernicotVitturi}, which in turn was inspired by the previous work for smooth squares as in \cite{BeneaBernicot} and drew insights from \cite{Benea_thesis,BeneaMuscalu_preprint}, in particular the idea of using non-local operators in the stopping-time arguments. The main difference here lies in the fact that we now have to deal with the simultaneous presence of two distinct scales in each $\pi_R(f,g)$ term, namely those given by $|R_1|$ and $|R_2|$, which breaks the symmetry in how the contributions of $f$ and $g$ are treated. See particularly sections \S \ref{section_reductions}, \S \ref{section_sizes} for how this is achieved.
\end{remark}
\par
%
%
The rest of the paper is structured as follows. In section \S\ref{section_reductions}, under the hypothesis that all rectangles have high eccentricity, we will reduce the problem to that of bounding a discretized trilinear sum with additional structure of certain (shifted) averages of $f,g,\mathbf{h}$. We will introduce a notion of time-frequency tiles adapted to the particular situation.
Sections \S\ref{section_sizes}, \S\ref{section_energies}, \S\ref{section_decomposition_lemmas} and \S\ref{section_generic_estimate} develop the time-frequency analysis tools needed to prove the main result. In particular, in section \S\ref{section_sizes} we will introduce some structured collections of tiles referred to as $\n$-columns\footnote{These play the analogous r\^{o}le played by trees, in classical time-frequency analysis terminology.}, where $\n$ denotes a shifting parameter, for which we will be able to bound the discretized trilinear sum explicitely. This process will give rise to sizes for $f$ and $\mathbf{h}$, but importantly not for $g$. Indeed, we will not try to optimize our estimates in $g$, and rather prove in section \S \ref{section:partial_results} some weaker estimates for the discretized trilinear sum in a partial range contained in $p \in (2,r)$ and $q=r$. In section \S\ref{section:proof_of_main_theorem} we will improve these estimates by interpolating the partial results with the case $r=\infty$, thus concluding the proof of Theorem \ref{main_theorem}.
%
%
%
\subsection*{Acknowledgements} Both authors are supported by ERC project FAnFArE no. 637510. The authors are also very grateful to Cristina Benea for many useful comments/discussions and in particular for having shared with us a preprint of \cite{BeneaMuscalu_preprint}.
%
%
%
%
%
%
%
%
\section{Reduction to a discretized model sum}\label{section_reductions}
%
%
%

%
%
%
Recall that we are considering the collection $\mathscr{R}_{\mathrm{high}}$ of rectangles of eccentricity bigger than 1. We start by performing a standard discretization procedure on the trilinear form $\Lambda^r_{\mathscr{R}_{\mathrm{high}}}$.\\
We have (using Radon duality, with $\ud\sigma$ the induced Lebesgue measure on the plane $\xi_1 + \xi_2 + \xi_3 = 0$)
\begin{align*}
\Lambda^r_{\mathscr{R}_{\mathrm{high}}}(f,g,\mathbf{h}) & = \int_{\R} \sum_{R\in\mathscr{R}_{\mathrm{high}}} \pi_{R_1}f(x)\pi_{R_2}g(x) h_{R}(x) \ud x \\
& = \sum_{R\in\mathscr{R}_{\mathrm{high}}} \int_{\xi_1 + \xi_2 + \xi_3 = 0} \widehat{f}(\xi_1) \mathds{1}_{R_1}(\xi_1) \widehat{g}(\xi_2)\mathds{1}_{R_2}(\xi_2) \widehat{h_{R}}(\xi_3) \ud \sigma(\xi_1,\xi_2,\xi_3) \\
& = \sum_{R\in\mathscr{R}_{\mathrm{high}}} \int_{\xi_1 + \xi_2 + \xi_3 = 0} \widehat{f}(\xi_1) \mathds{1}_{R_1}(\xi_1) \widehat{g}(\xi_2)\mathds{1}_{R_2}(\xi_2) \widehat{h_{R}}(\xi_3) \chi_{R_3}(\xi_3) \ud \sigma \\
& = \sum_{R\in\mathscr{R}_{\mathrm{high}}} \int_{\R} f \ast \widecheck{\mathds{1}}_{R_1}(x) g\ast \widecheck{\mathds{1}}_{R_2}(x) h_{R} \ast \widecheck{\chi}_{R_3}(x) \ud x,
\end{align*}
where we have denoted $R_3 := 2(-R_1-R_2)$ and $\chi_{R_3}$ is a $C^\infty$ bump function adapted to $R_3$ and identically equal to $1$ on $-R_1 -R_2$. Notice that $|R_3| \sim |R_2|$ by the assumption that $R$ is of high eccentricity. Now, since the functions $f\ast \widecheck{\mathds{1}}_{R_j}$ are morally roughly constant in modulus at scale $|R_j|^{-1}$, we do the following change of variables with respect to the smallest time-scale involved, the one given by $|R_2|^{-1}$ (indeed, we are assuming $|R_2| > |R_1|$):
\begin{align*}
\sum_{R\in\mathscr{R}_{\mathrm{high}}} & \int_{\R} f \ast \widecheck{\mathds{1}}_{R_1}(x) g\ast \widecheck{\mathds{1}}_{R_2}(x) h_{R} \ast \widecheck{\chi}_{R_3}(x) \ud x \\
& = \sum_{R\in\mathscr{R}_{\mathrm{high}}} |R_2|^{-1} \int_{\R} f \ast \widecheck{\mathds{1}}_{R_1}(|R_2|^{-1}y) g\ast \widecheck{\mathds{1}}_{R_2}(|R_2|^{-1}y) h_{R} \ast \widecheck{\chi}_{R_3}(|R_2|^{-1}y) \ud y \\
& = \sum_{R\in\mathscr{R}_{\mathrm{high}}} \sum_{n \in \Z} |R_2|^{-1} \int_{0}^{1} \pi_{R_1}f(|R_2|^{-1}(n+z)) \pi_{R_2}g(|R_2|^{-1}(n+z)) \\
& \qquad \qquad\qquad \qquad \qquad \qquad \cdot h_{R} \ast \widecheck{\chi}_{R_3}(|R_2|^{-1}(n+z)) \ud z,
\end{align*}
where we have gone back to writing $\pi_\omega f$ for $f \ast \widecheck{\mathds{1}}_\omega$ in the last line.\\
\begin{remark}
Observe that $|\pi_{R_2}g|$ is morally roughly constant at scale $|R_2|^{-1}$, thus morally roughly constant on the intervals $|R_2|^{-1}(n + [0,1])$ (so is $|h_R \ast \widecheck{\chi}_{R_3}|$ since $|R_3| \sim |R_2|$ for $R \in \mathscr{R}_{\mathrm{high}}$). However, $|\pi_{R_1}f|$ is morally roughly constant at the (generally) larger scale $|R_1|^{-1}$. The presence of two different simultaneous scales is the major source of difficulty in the analysis and it is here that the difference with the proof of Theorem \ref{theorem_nonsmooth_squares} will rely. 
\end{remark}
In light of the above remark, we want to take advantage of the fact that $|\pi_{R_1}f|$ is morally roughly constant at a larger scale in space to reduce to a trilinear form with additional structure. In order to explain this reduction properly we will make use of two types of tiles (mirroring the existence of two simultaneous scales in each $\pi_R(f,g)$), the first of which we introduce now.
%
%
\begin{definition}
A \emph{small tile} $\rho$ is a pair of the form 
\[ (\omega_2 \times I_\rho, \omega_3 \times I_\rho) \]
where $I_\rho, \omega_2$ are dyadic intervals such that there exists a rectangle $R \in \mathscr{R}$ with $\omega_2 = R_2$, $\omega_3 = R_3$ (recall $R_3 := 2(-R_1-R_2)$) and $|\omega_2||I_\rho| = 1$ (hence $|\omega_3||I_\rho| \sim 1$ as well; $\omega_3$ is in general not dyadic). Given a small tile $\rho$ we denote by $R(\rho)$ this unique rectangle $R$. The collection of all small tiles is denoted by $\SS$.
\end{definition}
Given $R \in \mathscr{R}_{\mathrm{high}}$ and $n \in \Z$ there exists a unique small tile $\rho = \rho(R,n)$ such that $R(\rho) = R$ and $I_{\rho} = |R_2|^{-1}[n,n+1]$, and viceversa. Because of this we can rewrite the trilinear form as 
\[ \Lambda^r_{\mathscr{R}_{\mathrm{high}}}(f,g,\mathbf{h}) = \sum_{\rho \in \SS} \int_{I_{\rho}} \pi_{R_1(\rho)}f(x)\pi_{R_2(\rho)}g(x) h_{R(\rho)}\ast \widecheck{\chi}_{R_3(\rho)}(x) \ud x; \]
by H\"{o}lder's inequality this is controlled by 
\[ \sum_{\rho \in \SS} \|\pi_{R_1(\rho)}f\|_{L^2(I_{\rho})} \|\pi_{R_2(\rho)}g\|_{L^2(I_{\rho})} \|h_{R(\rho)}\ast \widecheck{\chi}_{R_3(\rho)}\|_{L^\infty(I_{\rho})}. \]
Now we will use the fact that $|\pi_{R_1}f|$ is roughly constant on many $I_{\rho}$'s. Let $\rho$ be fixed and let $n$ be such that $I_{\rho} = |R_2|^{-1}[n,n+1]$. There exists a unique $k \in \Z$ such that $n = k \cdot \ec{R} + \ell$ with $0 \leq \ell < \ec{R}$, and thus we can associate uniquely to $\rho$ the interval $I = |R_1|^{-1}[k,k+1]$. This is a dyadic interval of length $|R_1|^{-1}$ and it has the property that $I_{\rho} \subset I$.\\
Given the interval $\omega$ let $\chi_\omega$ denote a smooth function such that $\chi_\omega (\xi) = 1$ for $\xi \in \omega$ and $\chi_\omega(\xi) = 0$ for $\xi \not\in 2\omega$. By definition of the frequency projections we then have $\widecheck{\chi}_{R_1} \ast (\pi_{R_1}f) = \pi_{R_1}f$, and thus we can bound for every $N > 0$
\[ |\pi_{R_1(\rho)}f(x)| \lesssim_N \int |\pi_{R_1(\rho)}f(y)| \frac{1}{|R_1|^{-1}}\Big(1 + \frac{|x-y|}{|R_1|^{-1}}\Big)^{-N} \ud y.  \]
Therefore we see that we can bound (by Minkowski's inequality)
\begin{align*}
\frac{\|\pi_{R_1(\rho)}f\|_{L^2(I_{\rho})}}{|I_{\rho}|^{1/2}} \lesssim & \int |\pi_{R_1(\rho)}f(y)| \sum_{\n \in \Z} (1 + |\n|)^{-N} \frac{\mathds{1}_{I^{\n}}(y)}{|I|} \ud y \\
& = \sum_{\n \in \Z} (1 + |\n|)^{-N} \fint_{I^\n} |\pi_{R_1(\rho)}f|,
\end{align*}
where $I$ is as above (in particular $I \supset I_\rho$) and 
\[ I^{\n} := I + \n|I|, \]
that is $I^{\n}$ denotes the interval $I$ shifted by $\n$ times its length.\\
With this in mind, we now define the second type of tiles.
%
%
\begin{definition}
Given a rectangle $R \in \mathscr{R}$ and a dyadic interval $I$, we define the (possibly empty) collection of small tiles $\SS^{\n}_{R,I}$ to be
\[ \SS^{\n}_{R,I} := \{ \rho = (\omega_2 \times I_\rho, \omega_3 \times I_\rho) \text{ small tile : } \omega_2 = R_2,\; \omega_3 = R_3,\; I_\rho \subseteq I^{\n} \}. \]
A \emph{super tile} $P$ is a pair of the form 
\[ (R_1 \times I,\; \SS^{\n}_{R, I}) \]
where $R \in \mathscr{R}$ and $I$ is a dyadic interval such that $|R_1||I| = 1$ (notice that the pair $(R,I)$ completely determines the super tile).\\
Given $P$ as above, we let $P_1 := R_1 \times I$ and call it simply a \emph{tile}, $I_P := I$, $R(P) := R$ (and analogously $R_j(P) := R_j$ for $j \in \{1,2,3\}$) and $\SS^{\n}_P := \SS^{\n}_{R, I}$. Finally, if $\P$ is a collection of super tiles, we let $\mathscr{R}(\P) := \{ R(P) \text{ : } P \in \P \}$.
\end{definition}
Now fix a parameter $r_0$ such that $r > r_0 > 2$. This will remain fixed throughout sections \S\ref{section_sizes} - \S\ref{section:partial_results} (ultimately we will take $r_0$ to depend on $r$; one could take for example $r_0 = (r+2)/2$). Using the fact that $\fint_{I} |\pi_{R_1}f| \leq \big(\fint_{I} |\pi_{R_1}f|^{r_0}\big)^{1/r_0}$ and the above remarks we can finally bound 
\[ |\Lambda^r_{\mathscr{R}_{\mathrm{high}}}(f,g,\mathbf{h})| \lesssim_{N} \sum_{\n \in \Z} (1 + |\n|)^N \Lambda^{\n}_{\P}(f,g,\mathbf{h}), \]
where $\Lambda^{\n}_{\P}$ denote the \emph{shifted} trilinear forms
\begin{equation}\label{eqn:trilinear_form_with_tiles} 
\begin{aligned}
\Lambda^{\n}_{\P}(f,g,\mathbf{h}) := \sum_{P \in \P} \big(\fint_{I_P} |\pi_{R_1(P)}f|^{r_0}\big)^{1/r_0}&  \Big[
\sum_{\rho \in \SS^{\n}_P} |I_\rho|^{1/2} \|\pi_{R_2(P)}g\|_{L^2(I_{\rho})} \\
& \times  \|h_{R(P)}\ast \widecheck{\chi}_{R_3(P)}\|_{L^\infty(I_{\rho})}\Big].  
\end{aligned}
\end{equation}
To bound $\Lambda^r_{\mathscr{R}_{\mathrm{high}}}(f,g,\mathbf{h})$ it will therefore suffice to bound the trilinear forms $\Lambda^{\n}_{\P}(f,g,\mathbf{h})$ in the same range with a constant that is at most polynomial in $\n$ (indeed, we will be able to show it is at most logarithmic).\\
In the following sections we will consider $\n$ fixed and study the trilinear forms $\Lambda^{\n}_{\P}$.
%
%
\begin{remark}
It might seem that we are introducing a gratuitous inefficiency in our argument by replacing the $L^1$-average $\fint_{I} |\pi_{R_1}f|$ with the larger $\big(\fint_{I} |\pi_{R_1}f|^{r_0}\big)^{1/r_0}$, but this will not be the case. This is due to our use of interpolation in the proof of Theorem \ref{main_theorem_2}: for the argument to carry through we will only need to prove boundedness of $\Lambda^r_{\mathscr{R}_{\mathrm{high}}}$ in any range sufficiently close to the  $L^2 \times L^2 \to L^1$ estimate, for all given $r>2$ (see Section \ref{section:proof_of_main_theorem}). Using $L^1$-averages would not enlarge the range obtained through interpolation, at least not with our argument.
\end{remark}
%
%
%
\section{$\n$-Columns and sizes}\label{section_sizes}
We will now introduce some special collections of super tiles for which we will be able to bound the (shifted) trilinear form explicitely. In the process, we will come up with quantities that will be used as sizes in order to perform a time-frequency analysis of the general trilinear form.\par
First we define an order relation on the super tiles.
%
%
\begin{definition}\label{definition_order_super_tiles}
Let $P, P'$ be super tiles and let $\n$ be fixed. Then we say that $P \prec_{\n}P'$ if 
\begin{align*}
R_1(P) & \supset R_1(P'), \\
I^{\n}_P & \subset I^{\n}_{P'}.
\end{align*}
We say that $P \preceq_{\n} P'$ if $P \prec_{\n}P'$ or $P = P'$.
\end{definition}
%
%
\begin{definition}
An \emph{$\n$-column} $\C$ with top a super tile $P_{\mathrm{top}}$ is a collection of super tiles such that for every $P \in \C$
\[ P \preceq_{\n} P_{\mathrm{top}}.\]
We denote $\mathsf{Top}(\C) = P_{\mathrm{top}}$.
\end{definition}
\begin{remark}
Notice the r\^{o}le of $\n$ in the above definitions. The difference between the above $\n$-columns and the columns originally defined in \cite{BeneaBernicot} resides in the fact that there is now the shifting parameter $\n$ to be taken into account. Thus, unlike in \cite{BeneaBernicot}, the tiles $P_1$ for $P \in \C$, that is the sets $R_1(P)\times I_P$, do not form an overlapping tree - unless $\n = 0$. See figure \ref{figure:n_column} for further clarification.
\end{remark}

The following lemma regarding the structure of an $\n$-column is essentially obvious once one unpacks the definitions; it will find its application in the discussion that follows.
\begin{lemma}\label{lemma_column_structure}
Let $\C$ be a column with top $P_{\mathrm{top}}$. Then for every super tile $P \in \C$ we have 
\[ \forall \rho \in \SS^{\n}_P, \quad I_\rho \subset I^{\n}_{P_{\mathrm{top}}}. \]
Moreover, if $R, R' \in \mathscr{R}(\C)$ are distinct, then we have $R_2 \cap R'_2 = \emptyset$.
\end{lemma}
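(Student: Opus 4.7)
The plan is to prove the two claims by unwinding the definitions of $\SS^{\n}_P$, of the partial order $\preceq_{\n}$, and of an $\n$-column, together with the standing hypothesis that the rectangles of $\mathscr{R}$ are pairwise disjoint. Both statements should be essentially immediate once the definitions are laid out.

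For the first claim, I would fix a super tile $P \in \C$ and a small tile $\rho \in \SS^{\n}_P$. By the very definition of $\SS^{\n}_P = \SS^{\n}_{R(P), I_P}$ one has $I_\rho \subseteq I^{\n}_P$. Since $\C$ is an $\n$-column with top $P_{\mathrm{top}}$, the relation $P \preceq_{\n} P_{\mathrm{top}}$ says in particular $I^{\n}_P \subseteq I^{\n}_{P_{\mathrm{top}}}$ (with equality only in the case $P = P_{\mathrm{top}}$). Combining these two inclusions yields $I_\rho \subset I^{\n}_{P_{\mathrm{top}}}$, which is what we want.

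For the second claim, I would pick super tiles $P, P' \in \C$ with $R(P) = R$ and $R(P') = R'$ where $R \neq R'$. Because $P, P' \preceq_{\n} P_{\mathrm{top}}$, Definition \ref{definition_order_super_tiles} gives $R_1 \supseteq R_1(P_{\mathrm{top}})$ and $R'_1 \supseteq R_1(P_{\mathrm{top}})$. Now $R_1, R'_1, R_1(P_{\mathrm{top}})$ are all dyadic intervals, and since $R_1 \cap R'_1 \supseteq R_1(P_{\mathrm{top}}) \neq \emptyset$, the dyadic grid structure forces $R_1$ and $R'_1$ to be nested; in particular $R_1 \cap R'_1 \neq \emptyset$. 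On the other hand, $R$ and $R'$ are two distinct elements of $\mathscr{R}$, a collection of pairwise disjoint rectangles, so $\emptyset = R \cap R' = (R_1 \cap R'_1) \times (R_2 \cap R'_2)$. Given that the first factor is nonempty, the only possibility is $R_2 \cap R'_2 = \emptyset$, as claimed.

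Neither step presents a serious obstacle: everything follows formally from the definitions and from nested dyadic intervals. The only point that deserves a small amount of care is the observation that the shifting parameter $\n$ plays no role in the second claim, since the conclusion is about the frequency components $R_2, R'_2$, whereas $\n$ only affects the spatial components $I^{\n}_P$. This is consistent with Figure \ref{figure:n_column} and explains why $\n$-columns behave, on the frequency side, exactly like the $0$-columns of \cite{BeneaBernicot}.
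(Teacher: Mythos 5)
Your argument is correct and is exactly the unpacking the paper leaves implicit when it declares the lemma ``essentially obvious once one unpacks the definitions'' and gives no proof. Both steps are sound: the first claim follows from $I_\rho \subseteq I^{\n}_P$ (definition of $\SS^{\n}_P$) chained with $I^{\n}_P \subseteq I^{\n}_{P_{\mathrm{top}}}$ (definition of $\preceq_{\n}$), and the second from $R_1 \cap R'_1 \supseteq R_1(P_{\mathrm{top}}) \neq \emptyset$ together with the disjointness of the rectangles in $\mathscr{R}$ and the factorization $R \cap R' = (R_1 \cap R'_1) \times (R_2 \cap R'_2)$.
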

In other words, the small tiles belonging to an $\n$-column agree with the time-frequency portrait of a column as in \cite{BeneaBernicot}. The second property in the lemma is due to the disjointness of the rectangles in $\mathscr{R}$. See figure \ref{figure:n_column} for a pictorial representation of an $\n$-column. 
%
%
%
\begin{figure}[ht]
\centering
\input{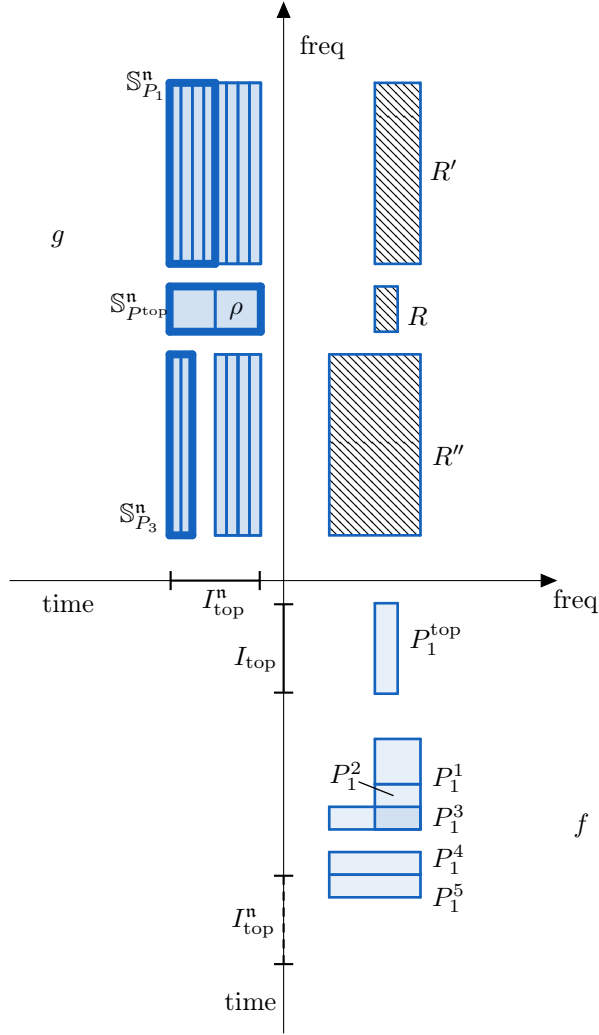}
\caption{\footnotesize A composite pictorial representation of an $\n$-column $\C$ for $\n = 3$. Quadrant I represents the frequency plane, quadrant II represents the time-frequency picture of $g$, quadrant IV represents the (rotated) time-frequency picture of $f$. The column consists of supertiles $\{P^{\mathrm{top}}, P^1, \ldots, P^5\}$ and has top $P^{\mathrm{top}}$, whose time interval is $I_{\mathrm{top}}$. The dashed rectangles in quadrant I are the frequency rectangles: $\mathscr{R}(\C) = \{R,R',R''\}$. The rectangles in quadrant II are small tiles (each has area 1). The thicker rectangles in quadrant II represent the union of all small tiles in $\SS^{\n}_P$ for $P = P^{\mathrm{top}}, P^1, P^3$. The rectangles in quadrant IV are the tiles $P^{\mathrm{top}}_1, P^1_1, \ldots, P^5_1$ as labeled (each has area 1). Observe that $R(P^{\mathrm{top}}) = R$, $R(P^1)=R(P^2)=R'$, $R(P^3)=R(P^4)=R(P^5)=R''$. Notice also that for any $j=1, \ldots, 5$ we have $I^{\n}_{P^j_1} \subset I^{\n}_{\mathrm{top}}$, as per definition.} \label{figure:n_column}
\end{figure}\\
%
%
Let then $\C$ be an $\n$-column and consider the trilinear form $\Lambda^{\n}_{\C}(f,g,\mathbf{h})$. First of all, we fix a super tile $P$ in $\C$ and look at the inner sum in the small tiles $\rho$ in \eqref{eqn:trilinear_form_with_tiles}: we have by H\"{o}lder inequality 
\begin{align*}
\sum_{\rho \in \SS^{\n}_P} & |I_\rho|^{1/2} \|\pi_{R_2(P)}g\|_{L^2(I_{\rho})} \|h_{R(P)}\ast \widecheck{\chi_{R_3(P)}}\|_{L^\infty(I_{\rho})} \\
& \leq 
\Big(\sum_{\rho \in \SS^{\n}_P} |I_\rho|^{1-r/2}  \|\pi_{R_2(P)}g\|_{L^2(I_{\rho})}^{r} \Big)^{1/r} 
\Big(\sum_{\rho \in \SS^{\n}_P} |I_\rho| \|h_{R(P)}\ast \widecheck{\chi_{R_3(P)}}\|_{L^\infty(I_{\rho})}^{r'} \Big)^{1/{r'}}.
\end{align*}
For the term in $g$, by Lemma \ref{lemma_column_structure} we can bound by H\"{o}lder's inequality
\[ \sum_{\rho \in \SS^{\n}_P} |I_\rho|^{1-r/2}  \|\pi_{R_2(P)}g\|_{L^2(I_{\rho})}^{r} = \sum_{\rho \in \SS^{\n}_P} |I_\rho| \Big(\fint_{I_{\rho}} |\pi_{R_2(P)}g|^2\Big)^{r/2} \leq \int_{I^{\n}_P} |\pi_{R_2(P)}g|^r, \]
because $r > 2$. \\
For the term in $\mathbf{h}$ instead, we see that we have for any small tile $\rho$ and any large $M > 0$
\begin{align*}
|I_\rho| \|h_{R(P)}\ast \widecheck{\chi_{R_3(P)}}\|_{L^\infty(I_{\rho})}^{r'} & = |I_\rho| \sup_{y \in I_\rho} |h_{R}\ast \widecheck{\chi_{R_3}}(y)|^{r'} \\
& \leq  |I_\rho|\big(\sup_{y \in I_\rho} \int |h_{R}(z)| |\widecheck{\chi_{R_3}}(y-z)| \ud z  \big)^{r'} \\
& \lesssim |I_\rho| \Big(\sup_{y \in I_\rho} \int |h_{R}(z)| \big(1 + \frac{|y-z|}{|I_\rho|}\big)^{-M} \frac{\ud z}{|I_\rho|}  \Big)^{r'} \\
& \leq |I_\rho| \big(\int |h_{R}(z)| \sup_{y \in I_\rho} \big(1 + \frac{|y-z|}{|I_\rho|}\big)^{-M} \frac{\ud z}{|I_\rho|}  \big)^{r'} \\
& \lesssim \int |h_{R}(z)|^{r'} \Phi_{I_\rho}(z) \ud z, 
\end{align*}
where $\Phi_{I}$ denotes a rapidly decaying function concentrated in $I$ and equal to $1$ there. In particular, $\Phi_I$ decays like $(1 + \mathrm{dist}(z,I)/|I|)^{-N}$ for some large $N$. Therefore, summing over $\rho \in \SS^{\n}_P$, we have 
\[ \sum_{\rho \in \SS^{\n}_P} |I_\rho| \|h_{R(P)}\ast \widecheck{\chi_{R_3(P)}}\|_{L^\infty(I_{\rho})}^{r'} \lesssim \int |h_{R(P)}(z)|^{r'} \Phi_{I^{\n}_P}(z) \ud z. \]
With these preliminary estimates at hand, we proceed to estimate $\Lambda^{\n}_{\C}(f,g,\mathbf{h})$. Write $I_{\mathrm{top}}$ in place of $I_{\mathsf{Top}(\C)}$ for shortness. We take the supremum in the terms in $f$ and use H\"{o}lder's inequality with exponents $(r,r')$ once again, this time in the super tiles, thus obtaining
\begin{align*}
\Lambda^{\n}_{\C}(f,g,\mathbf{h}) \lesssim &  \sup_{P \in \mathcal{C}}\Big(\fint_{I_P} |\pi_{R_1(P)}f|^{r_0} \Big)^{1/r_0} \Big[ \frac{1}{|I_{\mathrm{top}}|} \sum_{P \in \mathcal{C}}\int_{I^{\n}_P} |\pi_{R_2(P)}g|^r \Big]^{1/r}  \\
& \qquad \times \Big(\frac{1}{|I_{\mathrm{top}}|} \sum_{P \in \mathcal{C}}\int |h_{R(P)}|^{r'} \Phi_{I^{\n}_P} \Big)^{1/{r'}} |I_{\mathrm{top}}|.
\end{align*}
We then define sizes for $f$ and $\mathbf{h}$ according to the above estimate. 

\begin{definition}[Sizes]
%
%
Let $\P$ be a collection of super tiles, and let $f \in L^1_{\mathrm{loc}}(\R)$, $\mathbf{h} \in L^1_{\mathrm{loc}}(\ell^{r'})$. Then we define 
\[ \sizen_f(\P) := \sup_{P \in \P} \Big(\fint_{I_P} |\pi_{R_1(P)}f|^{r_0} \Big)^{1/r_0}, \]
and 
%
%
\[ \sizen_{\mathbf{h}}(\P) := \sup_{\substack{\C \text{ $\n$-column}, \\ \C \subset \P}} \Big(\frac{1}{|I_{\top{\C}}|} \sum_{P \in \mathcal{C}}\int |h_{R(P)}|^{r'} \Phi_{I^{\n}_P} \Big)^{1/{r'}}. \]
\end{definition}
Notice that, strictly speaking, $\sizen_f$ does not depend on the shifting parameter $\n$. However, the associated energy will and thus we keep the index $\n$ as a reminder.\\
We can then summarise the above discussion in the following proposition.
%
%
\begin{proposition}\label{proposition_column_control}
Let $\C$ be an $\n$-column. Then we can bound 
\[ \Lambda^{\n}_{\C}(f,g,\mathbf{h}) \lesssim  \Big[\frac{1}{|I_{\top{\C}}|} \sum_{P \in \C} \int_{I^{\n}_P} |\pi_{R_2(P)}g|^r  \Big]^{1/r} \\
 \sizen_f(\C) \sizen_{\mathbf{h}}(\C) |I_{\top{\C}}|. \]
\end{proposition}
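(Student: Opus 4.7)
The proof is essentially a double application of H\"{o}lder's inequality, and most of the necessary computation has already been carried out in the discussion preceding the statement; the plan is to organize those estimates and recognize the resulting factors as the sizes just defined. Starting from \eqref{eqn:trilinear_form_with_tiles} restricted to $\P = \C$, I would, for each fixed $P \in \C$, set aside the $f$-average $\bigl(\fint_{I_P}|\pi_{R_1(P)}f|^{r_0}\bigr)^{1/r_0}$ and apply H\"{o}lder's inequality in the index $\rho \in \SS^{\n}_P$ with exponents $(r,r')$ after suitably splitting the weight $|I_\rho|^{1/2}$, thereby decoupling the inner sum into a $g$-factor and an $\mathbf{h}$-factor.

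For the $g$-factor, I would rely on the geometry provided by Lemma \ref{lemma_column_structure}: the intervals $\{I_\rho : \rho \in \SS^{\n}_P\}$ have common length $|R_2(P)|^{-1}$ and partition $I^{\n}_P$, which has length $|R_1(P)|^{-1}$ and thus contains exactly $\ec{R(P)}$ of them. Rewriting $\|\pi_{R_2(P)}g\|_{L^2(I_\rho)}^r = |I_\rho|^{r/2}\bigl(\fint_{I_\rho}|\pi_{R_2(P)}g|^2\bigr)^{r/2}$ and then applying Jensen's inequality to the convex function $t \mapsto t^{r/2}$ (which requires $r \geq 2$) yields the bound by $\int_{I^{\n}_P}|\pi_{R_2(P)}g|^r$. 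For the $\mathbf{h}$-factor, the key observation is that $|R_3(P)| \sim |R_2(P)| = |I_\rho|^{-1}$ for $R(P) \in \mathscr{R}_{\mathrm{high}}$, so the kernel $\widecheck{\chi_{R_3(P)}}$ has rapid decay at scale $|I_\rho|$; controlling $\|h_{R(P)}\ast \widecheck{\chi_{R_3(P)}}\|_{L^\infty(I_\rho)}^{r'}$ by an integral of $|h_{R(P)}|^{r'}$ against a rapidly decaying weight $\Phi_{I_\rho}$ and summing in $\rho$ produces $\int |h_{R(P)}|^{r'}\Phi_{I^{\n}_P}$, after noting that the $\Phi_{I_\rho}$'s essentially sum to $\Phi_{I^{\n}_P}$.

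A second application of H\"{o}lder, this time across $P \in \C$ with exponents $(\infty, r, r')$ and a normalizing factor $|I_{\top{\C}}|$, then splits the result into the three desired factors: the supremum in $P$ of the $f$-averages, which is exactly $\sizen_f(\C)$; the $g$-sum, which matches the first factor in the claim verbatim; and the $\mathbf{h}$-sum $\bigl(\frac{1}{|I_{\top{\C}}|}\sum_{P \in \C} \int|h_{R(P)}|^{r'}\Phi_{I^{\n}_P}\bigr)^{1/r'}$, which is bounded by $\sizen_{\mathbf{h}}(\C)$ since $\C$ itself is an $\n$-column contained in $\C$ and hence admissible in the defining supremum. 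The only step requiring genuine attention is the inner $g$-estimate, which relies on the partition property of the small tiles in $\SS^{\n}_P$ together with the assumption $r > 2$; modulo this geometric observation the argument is essentially bookkeeping, and there is no real obstacle — the conceptual work has been absorbed into the carefully chosen definitions of the sizes.
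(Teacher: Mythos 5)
Your proposal is correct and follows the paper's own argument essentially line by line: two applications of H\"older with exponents $(r,r')$ (inner in $\rho$, outer in $P$), Jensen/H\"older for the $g$-factor using the fact that the $I_\rho$'s tile $I^{\n}_P$ (Lemma \ref{lemma_column_structure}) and $r>2$, rapid decay of $\widecheck{\chi}_{R_3(P)}$ at scale $|I_\rho|$ for the $\mathbf{h}$-factor, and then identification of the resulting factors with $\sizen_f(\C)$ and $\sizen_{\mathbf{h}}(\C)$. This is precisely the computation the paper carries out in the discussion preceding the proposition.
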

As explained in the overview of the proof, we will not introduce a size for $g$. Control of the contribution of $g$ will rather be achieved through the following lemma.
%
%
\begin{lemma}\label{lemma_g_control}
If $\C$ is an $\n$-column, then we have 
\[ \frac{1}{|I_{\top{\C}}|} \sum_{P \in \C} \int_{I^{\n}_P} |\pi_{R_2(P)}g|^r  \lesssim  \fint_{I^{\n}_{\top{\C}}} (\VarC{r}g(x))^r \ud x, \]
where $\VarC{r}$ is the Variational Carleson operator given by
\[ \VarC{r}g(x) := \sup_N \sup_{\xi_1 < \cdots < \xi_N} \Big( \sum_{j=1}^{N-1} |\pi_{[\xi_j, \xi_{j+1}]} g(x)|\Big)^{1/r}. \]
\end{lemma}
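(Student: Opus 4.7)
The plan is to reduce the claimed integral inequality to a pointwise estimate on $I^{\n}_{\top{\C}}$. Since every $P\in\C$ satisfies $I^{\n}_P\subset I^{\n}_{\top{\C}}$ by the defining property of $\prec_{\n}$, and since $|I^{\n}_{\top{\C}}|=|I_{\top{\C}}|$, one can exchange sum and integral to rewrite the LHS as
\[ \fint_{I^{\n}_{\top{\C}}} F(x)\,\ud x, \qquad F(x):=\sum_{\substack{P\in\C \\ x\in I^{\n}_P}} |\pi_{R_2(P)}g(x)|^r. \]
It therefore suffices to prove the pointwise bound $F(x)\lesssim (\VarC{r}g(x))^r$.

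The first step is to control the spatial multiplicity: group the super tiles of $\C$ by their frequency rectangle, setting $\C_R:=\{P\in\C:R(P)=R\}$ for each $R\in\mathscr{R}(\C)$. Any $P\in\C_R$ is determined by its time interval $I_P$, which is dyadic of fixed length $|R_1|^{-1}$; hence distinct $I_P$'s are disjoint, and since the shift $\n|I_P|$ used to form $I^{\n}_P$ is the same for all $P\in\C_R$, the family $\{I^{\n}_P:P\in\C_R\}$ is pairwise disjoint as well. Consequently each $x$ lies in $I^{\n}_P$ for at most one $P$ in each $\C_R$, and
\[ F(x) \leq \sum_{R\in\mathscr{R}(\C)} |\pi_{R_2} g(x)|^r. \]

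The second step is to compare the right-hand side to the variational Carleson operator. By the second assertion of Lemma \ref{lemma_column_structure}, the intervals $\{R_2:R\in\mathscr{R}(\C)\}$ are pairwise disjoint, so (assuming first that $\mathscr{R}(\C)$ is finite) their endpoints can be listed as a strictly increasing sequence $\xi_1<\xi_2<\cdots<\xi_N$, with each $R_2$ appearing as one of the consecutive pieces $[\xi_j,\xi_{j+1}]$ and the remaining pieces corresponding to the gaps in frequency. Adding the nonnegative "gap" contributions, one obtains
\[ \sum_{R\in\mathscr{R}(\C)} |\pi_{R_2} g(x)|^r \leq \sum_{j=1}^{N-1} |\pi_{[\xi_j,\xi_{j+1}]}g(x)|^r \leq (\VarC{r} g(x))^r \]
by the very definition of $\VarC{r}$ (with the standard $\ell^r$ summation inside). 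The case of an infinite $\mathscr{R}(\C)$ follows by monotone convergence applied to an exhausting sequence of finite subcollections. Integrating over $I^{\n}_{\top{\C}}$ yields the lemma.

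The argument is almost entirely structural, and the only genuinely delicate point is verifying that the shifting $I_P\mapsto I^{\n}_P$ in step one preserves the disjointness needed for the multiplicity estimate; this is precisely where the uniformity of $|I_P|=|R_1|^{-1}$ within each $\C_R$ is essential. Once this is in hand, the reduction to a pointwise inequality controlled by $\VarC{r}g$ is immediate from the frequency disjointness provided by Lemma \ref{lemma_column_structure}.
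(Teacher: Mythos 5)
Your proof is correct and follows essentially the same route as the paper's: reduce to a pointwise inequality on $I^{\n}_{\top{\C}}$ and bound the integrand by $(\VarC{r}g)^r$ using the disjointness structure of an $\n$-column. The only cosmetic difference is that you separate the bounded-multiplicity step (disjointness of the $I^{\n}_P$ within a fixed $\C_R$) from the frequency-disjointness step (distinct $R_2$'s disjoint by Lemma \ref{lemma_column_structure}), whereas the paper packages both into the single observation that the shifted tiles $R_2(P)\times I^{\n}_P$ are pairwise disjoint and contained in $\R\times I^{\n}_{\top{\C}}$; the content is identical.
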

\begin{proof}
By the definition of $\n$-column, we have that $I^{\n}_P \subset I^{\n}_{\top{\C}}$ for all $P \in \C$, and moreover if $P, P' \in \C$ are such that $R_2(P) \neq R_2(P')$ we necessarily have $R_2(P) \cap R_2(P') = \emptyset$ (see Lemma \ref{lemma_column_structure}). In other words, the ``shifted tiles'' $R_2(P) \times I^{\n}_P$ are all disjoint and contained in the strip $\R \times I^{\n}_{\top{\C}}$. If we then rewrite 
\[ \frac{1}{|I_{\top{\C}}|} \sum_{P \in \C} \int_{I^{\n}_P} |\pi_{R_2(P)}g|^r = \frac{1}{|I_{\top{\C}}|} \int_{I^{\n}_{\top{\C}}}\sum_{P \in \C}  |\pi_{R_2(P)}g(x)|^r \mathds{1}_{I^{\n}_P}(x) \ud x  \]
we see that we can bound the integrand pointwise by $(\VarC{r}g(x))^r$, and we are done.
\end{proof}
\begin{remark}
The above lemma relies crucially on the structure of the $\n$-columns to hold; thus we can see that the lemma provides motivation for our definition.
\end{remark}
Finally, we conclude this section with the observation that the sizes introduced are controlled by maximal averages, which will allow us to perform the time-frequency analysis. 
\begin{lemma}\label{lemma_f_control}
Let $\P$ be a collection of super tiles. Then we have 
\[ \sizen_f(\P) \lesssim \sup_{P \in \P} \Big(\fint_{I_P} |\mathscr{C}f|^{r_0}\Big)^{1/r_0}, \]
where $\mathscr{C}$ is the Carleson operator.
\end{lemma}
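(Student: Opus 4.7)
The plan is straightforward and reduces to a pointwise domination. The key observation is that for any dyadic interval $R_1(P) = [a,b]$, the sharp frequency projection $\pi_{R_1(P)}f$ can be written as a difference of two half-line frequency projections, each of which is trivially bounded in modulus by the Carleson operator $\mathscr{C}f$ by definition.

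More precisely, first I would write, for fixed $P \in \P$ with $R_1(P) = [a,b]$,
\[
\pi_{R_1(P)}f(x) = \int_{a}^{b} \widehat{f}(\xi) e^{2\pi i \xi x} \ud \xi = \int_{-\infty}^{b} \widehat{f}(\xi) e^{2\pi i \xi x} \ud\xi \,-\, \int_{-\infty}^{a} \widehat{f}(\xi) e^{2\pi i \xi x} \ud\xi,
\]
so by the triangle inequality and the very definition of $\mathscr{C}$ given in the Introduction,
\[
|\pi_{R_1(P)}f(x)| \leq 2\,\mathscr{C}f(x) \qquad \text{for a.e. } x \in \R.
\]
This pointwise bound is uniform in the choice of $P \in \P$.

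Next, raising to the $r_0$-th power and averaging over $I_P$ gives
\[
\Big(\fint_{I_P} |\pi_{R_1(P)}f|^{r_0}\Big)^{1/r_0} \leq 2 \Big(\fint_{I_P} |\mathscr{C}f|^{r_0}\Big)^{1/r_0},
\]
and taking the supremum over $P \in \P$ on both sides yields exactly the claimed inequality. There is no genuine obstacle here: the lemma is purely a consequence of the fact that the non-smooth frequency projection onto any interval is majorized by a universal constant times $\mathscr{C}f$, and the sup and average operations preserve this bound. The factor $2$ is harmless and absorbed into $\lesssim$.
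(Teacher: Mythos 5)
Your proof is correct and is precisely the argument the paper has in mind: the paper's own proof of this lemma simply reads \textit{``Obvious''}, relying on the pointwise bound $|\pi_{I}f|\le 2\,\mathscr{C}f$ already noted in the Introduction, which you have spelled out carefully.
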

\begin{proof}
Obvious.
\end{proof}
\begin{lemma}\label{lemma_h_control}
Let $\P$ be a collection of super tiles. Then we have 
\[ \sizen_{\mathbf{h}}(\P) \lesssim \sup_{P \in \P} \Big(\frac{1}{|I_P|} \int \|\mathbf{h}\|_{\ell^{r'}}^{r'} \Phi_{I^{\n}_P}  \Big)^{1/{r'}}. \]
\end{lemma}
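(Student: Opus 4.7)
The plan is to show, for an arbitrary $\n$-column $\C \subset \P$ with top $P_{\mathrm{top}}$, that the inner quantity being supremized in the definition of $\sizen_{\mathbf{h}}(\P)$ is controlled by the single averaged expression on the right-hand side of the lemma evaluated at $P = P_{\mathrm{top}}$. Writing $I_{\mathrm{top}} := I_{P_{\mathrm{top}}}$ for brevity, the strategy is to reorganize the sum $\sum_{P \in \C} \int |h_{R(P)}|^{r'} \Phi_{I_P^{\n}}$ by grouping super tiles according to their underlying rectangle, and then to pass from the smaller shifted intervals $\{I_P^{\n}\}$ to the single shifted interval $I_{\mathrm{top}}^{\n}$ through a Schur-type decay estimate.

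In detail: for each $R \in \mathscr{R}(\C)$, set $\C_R := \{P \in \C : R(P) = R\}$. All super tiles in $\C_R$ share the first frequency component $R_1$, so their time intervals $I_P$ are dyadic of common length $|R_1|^{-1}$. Hence distinct $P, P' \in \C_R$ have disjoint dyadic $I_P, I_{P'}$, and their translates $I_P^{\n}, I_{P'}^{\n}$ (both shifted by $\n |R_1|^{-1}$) are disjoint as well. By the definition of an $\n$-column each $I_P^{\n}$ is contained in $I_{\mathrm{top}}^{\n}$. A standard Schur estimate exploiting the rapid polynomial decay of $\Phi$ then yields the pointwise bound
\[ \sum_{P \in \C_R} \Phi_{I_P^{\n}}(x) \lesssim \Phi_{I_{\mathrm{top}}^{\n}}(x), \]
uniformly in $R$ and $x$: for $x \in I_{\mathrm{top}}^{\n}$ the disjointness of the $I_P^{\n}$ lets the summands decay geometrically with the proximity order to $x$, producing $O(1) \lesssim \Phi_{I_{\mathrm{top}}^{\n}}(x)$; for $x \notin I_{\mathrm{top}}^{\n}$ one has $\mathrm{dist}(x, I_P^{\n}) \geq \mathrm{dist}(x, I_{\mathrm{top}}^{\n})$, and a direct computation shows that the faster decay of each $\Phi_{I_P^{\n}}$ (scale $|R_1|^{-1}$) compared to $\Phi_{I_{\mathrm{top}}^{\n}}$ (scale $|I_{\mathrm{top}}|$) offsets the count $\leq |I_{\mathrm{top}}|\,|R_1|$ of terms, leaving a net factor $(|R_1|^{-1}/|I_{\mathrm{top}}|)^{N-1} \leq 1$.

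Summing over $R \in \mathscr{R}(\C)$ and using the trivial pointwise bound $\sum_{R} |h_R(x)|^{r'} \leq \|\mathbf{h}(x)\|_{\ell^{r'}}^{r'}$ yields
\[ \sum_{P \in \C} |h_{R(P)}(x)|^{r'} \Phi_{I_P^{\n}}(x) = \sum_{R \in \mathscr{R}(\C)} |h_R(x)|^{r'} \sum_{P \in \C_R} \Phi_{I_P^{\n}}(x) \lesssim \|\mathbf{h}(x)\|_{\ell^{r'}}^{r'} \Phi_{I_{\mathrm{top}}^{\n}}(x). \]
Integrating, dividing by $|I_{\mathrm{top}}|$, and taking the $r'$-th root bounds the quantity in the definition of $\sizen_{\mathbf{h}}(\C)$ by the single term $P = P_{\mathrm{top}}$ of the supremum on the right-hand side of the lemma; taking the supremum over $\n$-columns $\C \subset \P$ then completes the proof. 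The only substantive point is the Schur estimate, which is routine but requires tracking the uniform shift $\n$; since $I_P^{\n}$ and $I_{\mathrm{top}}^{\n}$ are translated by the same fraction of their respective lengths, the internal geometry inside $I_{\mathrm{top}}^{\n}$ is the same as in the unshifted case and the argument proceeds unchanged.
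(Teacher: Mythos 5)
Your proof is correct and is precisely the argument the paper alludes to with the remark that "the claim follows quickly from the definition of an $\n$-column and the rapid decay of the functions $\Phi_{I^{\n}_P}$": the paper leaves this to the reader, and you have filled it in faithfully. The key point — grouping by rectangle $R$ so that the tiles $\C_R$ have pairwise disjoint time intervals (all dyadic of length $|R_1(P)|^{-1}$), hence disjoint translates $I^{\n}_P$ contained in $I^{\n}_{\mathrm{top}}$, then summing the bumps via a Schur-type estimate and passing to $\|\mathbf{h}\|_{\ell^{r'}}$ when summing over $R$ — is exactly the structure that the definition of the $\n$-column is set up to deliver, and a naive argument that tried to compare $\sum_P |I_P|$ with $|I_{\mathrm{top}}|$ would fail precisely because columns may contain many tiles overlapping in time; you correctly avoid that. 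The only caveat worth noting (and it is standard) is that the Schur estimate costs one power in the decay exponent of $\Phi$, so strictly speaking the $\Phi_{I^{\n}_P}$ on the right-hand side should be understood as a function with slightly weaker decay than the one on the left-hand side; since $N$ is arbitrary and constants are implicit, this is harmless.
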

\begin{proof}
The claim follows quickly from the definition of an $\n$-column and the rapid decay of the functions $\Phi_{I^{\n}_P}$.
\end{proof}
%
%
%
%
\section{Energies}\label{section_energies}
In this section we will introduce the remaining quantities that we need in order to enable the time-frequency analysis of $\Lambda^{\n}_{\P}$. It will be necessary to introduce a special notion of disjointness for $\n$-columns (quite similar to that in \cite{BeneaBernicot}) in order to ensure good control of the quantities we are going to introduce, which is as follows.
%
%
\begin{definition}\label{definition_mutual_disjointness}
Let $\mathfrak{C}$ be a collection of $\n$-columns. We say that the $\n$-columns in $\mathfrak{C}$ are \emph{mutually disjoint} if
\begin{enumerate}[i)]
\item for every $\C, \C' \in \mathfrak{C}$ we have $\C \cap \C' = \emptyset$ (that is, the $\n$-columns are disjoint as sets of super tiles);
\item any two tops are not comparable under the $\preceq_{\n}$ relation, or equivalently the sets $R_1(\top{\C}) \times I^{\n}_{\top{\C}}$ for $\C \in \mathfrak{C}$ are pairwise disjoint.
\end{enumerate}
\end{definition}
Notice the presence of the shifting parameter $\n$ in the above definition. It is not the tiles $\top{\C}_1$ that are assumed to be disjoint sets in time-frequency, but rather their shifted versions.\\
With this notion of mutual disjointness we can now define the energies. 
%
%
\begin{definition}[Energy of $f$]
Let $\P$ be a collection of super tiles, and let $f \in L^1_{\mathrm{loc}}(\R)$. We define 
\[ \energyn_f (\P) := \sup_{n \in \Z} \sup_{\mathfrak{C}} 2^{n} \Big(\sum_{\C \in \mathfrak{C}} |I_{\top{\C}}|\Big)^{1/r_0}, \] 
where the inner supremum ranges over all collections $\mathfrak{C}$ of mutually disjoint \mbox{$\n$-columns} $\C$ such that for each $\C \in \mathfrak{C}$ and each $P \in \C$
\[ \Big(\fint_{I_P} |\pi_{R_1(P)}f|^{r_0}\Big)^{1/r_0} \geq 2^n. \]
\end{definition}
%
%
\begin{definition}[Energy of $\mathbf{h}$]
Let $\P$ be a collection of super tiles, and let $\mathbf{h} \in L^1_{\mathrm{loc}}(\ell^{r'})$. We define 
\[ \energyn_{\mathbf{h}} (\P) := \sup_{n \in \Z} \sup_{\mathfrak{C}} 2^{n} \Big(\sum_{\C \in \mathfrak{C}} |I_{\top{\C}}|\Big)^{1/{r'}}, \] 
where the inner supremum ranges over all collections $\mathfrak{C}$ of mutually disjoint \mbox{$\n$-columns} $\C$ such that for each $\C \in \mathfrak{C}$
\[ \Big(\frac{1}{|I_{\top{\C}}|} \sum_{P \in \mathcal{C}}\int |h_{R(P)}|^{r'} \Phi_{I^{\n}_P} \Big)^{1/{r'}} \geq 2^n. \]
\end{definition}
\begin{remark}
Observe that $\energyn_f$ is an $L^{r_0}$ quantity, while $\energyn_{\mathbf{h}}$ is an $L^{r'}$ quantity. The fact that $r' < 2 < r_0$ will be fundamental in summing up the energies coming from a decomposition of $\P$ into $\n$-columns (see Proposition \ref{proposition_generic_estimate}).
\end{remark}
We now show how to control the energies in terms of $L^p$ norms of the functions $f, \mathbf{h}$. We start with the latter.
%
%
\begin{lemma}\label{lemma_energy_h}
Let $\P$ be a collection of super tiles and let $\mathbf{h} \in L^{r'}(\ell^{r'})$. Then we have 
\[ \energyn_{\mathbf{h}}(\P) \lesssim \|\mathbf{h}\|_{L^{r'}(\ell^{r'})}. \]
\end{lemma}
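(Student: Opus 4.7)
The plan is to unwind the definition of $\energyn_{\mathbf{h}}(\P)$ and reduce the desired bound to the trivial identity $\sum_R \int |h_R|^{r'} = \|\mathbf{h}\|_{L^{r'}(\ell^{r'})}^{r'}$. Fix $n \in \Z$ and a collection $\mathfrak{C}$ of mutually disjoint $\n$-columns that nearly realizes the outer supremum and satisfies, for every $\C \in \mathfrak{C}$,
\[ \Big(\frac{1}{|I_{\top{\C}}|}\sum_{P \in \C} \int |h_{R(P)}|^{r'}\Phi_{I^{\n}_P}\Big)^{1/r'} \geq 2^n. \]
Raising to the $r'$-th power and summing yields
\[ 2^{nr'}\sum_{\C \in \mathfrak{C}} |I_{\top{\C}}| \leq \sum_{\C \in \mathfrak{C}}\sum_{P \in \C}\int |h_{R(P)}|^{r'}\Phi_{I^{\n}_P}. \]
Condition (i) of mutual disjointness guarantees that no super tile is shared between two distinct columns, so this is an unambiguous single sum over $P \in \bigsqcup_{\C \in \mathfrak{C}}\C$. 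I would next regroup the sum according to the rectangle $R = R(P)$.

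The crucial observation is then that for each fixed $R \in \mathscr{R}$, every super tile $P$ with $R(P) = R$ has time interval of the common length $|I_P| = |R_1|^{-1}$ and is determined by that interval. Hence distinct super tiles with the same associated rectangle carry distinct dyadic intervals of the same length, which are therefore pairwise disjoint; their $\n$-shifts $I^{\n}_P$ inherit both dyadicity and disjointness. Since $\Phi_I$ decays like $(1 + \mathrm{dist}(\cdot, I)/|I|)^{-N}$ for some large $N$, one gets the pointwise bound
\[ \sum_{P :\, R(P)=R}\Phi_{I^{\n}_P}(x) \lesssim 1, \]
because at most $O(1)$ disjoint intervals of length $|R_1|^{-1}$ lie in each annulus of thickness $|R_1|^{-1}$ around $x$, producing a convergent geometric series.

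Combining the above steps gives
\[ 2^{nr'}\sum_{\C \in \mathfrak{C}} |I_{\top{\C}}| \lesssim \sum_{R \in \mathscr{R}}\int |h_R|^{r'} = \|\mathbf{h}\|_{L^{r'}(\ell^{r'})}^{r'}, \]
and taking $r'$-th roots followed by suprema over $n$ and over $\mathfrak{C}$ concludes the proof. No substantial obstacle is anticipated here: only condition (i) of mutual disjointness is really used, to rule out double-counting of super tiles across different columns, while condition (ii) is not needed for this particular energy. The analogous bound for $\energyn_f$, which is an $L^{r_0}$ quantity with $r_0 > 2$ rather than an $L^{r'}$ one, is presumably the more delicate statement and will likely require the full strength of mutual disjointness together with a John--Nirenberg-type argument applied to $\mathscr{C}f$.
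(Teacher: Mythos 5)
Your proof is correct and follows essentially the same route as the paper's: fix a near-extremal $n$ and $\mathfrak{C}$, dominate $2^{nr'}\sum_{\mathcal{C}}|I_{\mathsf{Top}(\mathcal{C})}|$ by $\sum_{\mathcal{C}}\sum_{P\in\mathcal{C}}\int|h_{R(P)}|^{r'}\Phi_{I_P^{\mathfrak{n}}}$, regroup by $R$, and use that the super tiles sharing a fixed $R$ have pairwise disjoint time intervals of equal length (so their $\mathfrak{n}$-shifts remain disjoint and $\sum_{P:\,R(P)=R}\Phi_{I_P^{\mathfrak{n}}}\lesssim 1$), with condition (i) of mutual disjointness ruling out double-counting. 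Your side remark that condition (ii) is not used here, whereas $\mathrm{Energy}^{\mathfrak{n}}_f$ is the more delicate quantity, is also accurate.
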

\begin{proof}
This lemma is essentially the same as Proposition 2.10 in \cite{BernicotVitturi}.\\
Let $n \in \Z$ and $\mathfrak{C}$ be a collection of mutually disjoint $\n$-columns such that the pair realizes the supremum in the definition of $\energyn_{\mathbf{h}}$ within a factor of 2, that is 
\[ \energyn_{\mathbf{h}}(\P)^{r'} \sim 2^{r'n} \sum_{\C \in \mathfrak{C}} |I_{\top{\C}}|. \]
Then the right hand side is dominated by 
\[ \sum_{\C \in \mathfrak{C}} \sum_{P \in \mathcal{C}}\int |h_{R(P)}|^{r'} \Phi_{I^{\n}_P}, \]
which in turn is dominated by 
\[ \sum_{R \in \mathscr{R}} \int |h_{R}|^{r'}\Big( \sum_{\C \in \mathfrak{C}}  \sum_{\substack{ P \in \C : \\ R(P) = R}}\Phi_{I^{\n}_{P}} \Big). \]
Observe that as $P$ ranges over the super tiles such that $R(P) = R$, the intervals $I^{\n}_P$ are all disjoint; as the $\n$-columns are mutually disjoint we then have that  
\[ \sum_{\C \in \mathfrak{C}}  \sum_{\substack{ P \in \C : \\ R(P) = R}}\Phi_{I^{\n}_{P}} \lesssim 1 \]
and the conclusion follows.
\end{proof}
Next we will show how to control $\energyn_f$ in terms of $\|f\|_{L^{r_0}}$. Here the presence of the shift parameter will produce an unavoidable logarithmic loss in $\n$ in the inequality, which will however be more than acceptable for our purposes.
%
%
\begin{lemma}\label{lemma_energy_f}
Let $\P$ be a collection of super tiles and let $f \in L^{r_0}(\R)$. Then 
\[ \energyn_f(\P) \lesssim \log^{+}(\n) \|f\|_{L^{r_0}}, \]
where $\log^{+}(x) = \log(2 + |x|)$.
\end{lemma}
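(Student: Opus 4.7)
By the definition of $\energyn_f$, it suffices to show: for any $n\in\Z$ and any collection $\mathfrak{C}$ of mutually disjoint $\n$-columns whose super tiles $P$ satisfy $(\fint_{I_P}|\pi_{R_1(P)}f|^{r_0})^{1/r_0}\geq 2^n$ (in particular this holds for every top $\top{\C}$),
\[ 2^{nr_0}\sum_{\C\in\mathfrak{C}}|I_{\top{\C}}|\lesssim\log^{r_0}(\n)\|f\|_{L^{r_0}}^{r_0}. \]
Taking $P=\top{\C}$ in the defining inequality and summing gives
\[ 2^{nr_0}\sum_{\C}|I_{\top{\C}}|\leq\sum_{\C}\int_{I_{\top{\C}}}|\pi_{R_1(\top{\C})}f|^{r_0}, \]
so it remains to bound the right hand side by $\log^{r_0}(\n)\|f\|_{L^{r_0}}^{r_0}$.

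My plan is to \emph{shift} the integration from $I_{\top{\C}}$ onto $I^\n_{\top{\C}}$ at a cost of $\log^{r_0}(\n)$, and then exploit the disjointness of the shifted tiles $R_1(\top{\C})\times I^\n_{\top{\C}}$ guaranteed by mutual disjointness. For the shift I would use that $\pi_{R_1}f$ is Fourier-supported in $R_1$, so that the truncated projection kernel $\widecheck{\mathds{1}}_{R_1(\top{\C})}\mathds{1}_{|x|\leq\n|I_{\top{\C}}|}$ has $L^1$-norm of order $\log(\n)$; a convolution--Jensen argument would then yield
\[ \int_{I_{\top{\C}}}|\pi_{R_1(\top{\C})}f|^{r_0}\lesssim\log^{r_0}(\n)\int|\pi_{R_1(\top{\C})}f|^{r_0}\,\Psi_{I^\n_{\top{\C}}}, \]
where $\Psi_{I^\n_{\top{\C}}}$ is a rapidly decaying weight concentrated on $I^\n_{\top{\C}}$.

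After summing in $\C$ and swapping sum and integral, the crucial pointwise estimate is
\[ \sum_{\C:\, x\in I^\n_{\top{\C}}}|\pi_{R_1(\top{\C})}f(x)|^{r_0}\leq(\VarC{r_0}f(x))^{r_0}, \]
which follows immediately from mutual disjointness: whenever $x$ lies in the time component of two shifted tiles, their frequency components $R_1(\top{\C})$ must be disjoint, producing a family of disjoint dyadic frequency intervals to which the variational Carleson operator applies. The $L^{r_0}$-boundedness of $\VarC{r_0}$ for $r_0>2$ then yields the desired inequality. The main obstacle is the shift-absorption step: the $\n$-shift is much larger than the natural scale $|I_{\top{\C}}|$ of the frequency projection, so producing a clean $\log^{r_0}(\n)$ loss (rather than a polynomial one) requires careful choice of the weight $\Psi_{I^\n_{\top{\C}}}$ and a precise exploitation of the harmonic-analytic decay of the projection kernel.
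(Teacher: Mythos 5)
The shift step is where your plan breaks down, and it breaks in a way that cannot be repaired: the inequality
\[
\int_{I_{\top{\C}}}|\pi_{R_1(\top{\C})}f|^{r_0}\lesssim\log^{r_0}(\n)\int|\pi_{R_1(\top{\C})}f|^{r_0}\,\Psi_{I^\n_{\top{\C}}}
\]
is simply false, and no choice of weight $\Psi_{I^\n_{\top{\C}}}$ concentrated near $I^\n_{\top{\C}}$ can save it with a loss that is independent of $f$. Band-limitation to $R_1(\top{\C})$ imposes no constraint that transfers $L^{r_0}$ mass from $I_{\top{\C}}$ outward to $I^\n_{\top{\C}}$. Concretely, take $f$ to be a Schwartz wave packet with $\widehat{f}$ supported inside $R_1(\top{\C})$ and with time concentration on $I_{\top{\C}}$; then $\pi_{R_1(\top{\C})}f=f$, the left-hand side is of order $|I_{\top{\C}}|\,\|f\|_\infty^{r_0}$, while the right-hand side is $O(\n^{-Nr_0})$ smaller for any $N$. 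The truncated kernel $\widecheck{\mathds{1}}_{R_1}\mathds{1}_{|x|\leq\n|I_{\top{\C}}|}$ having $L^1$ norm $\sim\log\n$ does not help, because (i) it is not a reproducing kernel for the band-limited class once truncated, and (ii) even if you used the exact reproducing identity $\pi_{R_1}f=\pi_{R_1}f*\widecheck{\chi}_{R_1}$ with rapidly decaying $\widecheck{\chi}_{R_1}$ and Jensen, you would be bounding $|\pi_{R_1}f(x)|^{r_0}$ for $x\in I_{\top{\C}}$ by a weighted average of $|\pi_{R_1}f(y)|^{r_0}$ over $y$ \emph{near} $I_{\top{\C}}$, not over $I^\n_{\top{\C}}$. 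Your concluding pointwise estimate, $\sum_{\C:\,x\in I^\n_{\top{\C}}}|\pi_{R_1(\top{\C})}f(x)|^{r_0}\leq(\VarC{r_0}f(x))^{r_0}$, is correct given mutual disjointness of the shifted tiles, but you never get to apply it.

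The paper's proof uses the opposite strategy: it stays entirely on the \emph{unshifted} side and makes up the $\log^{+}(\n)$ loss by a counting argument rather than an analytic transfer. It introduces the unshifted order $<$ on super tiles and passes to the subcollection $\P_{\mathrm{tops}}^{\mathrm{max}}$ of $<$-maximal tops; these are pairwise disjoint as ordinary (unshifted) tiles, so $\sum_{P_0\in\P_{\mathrm{tops}}^{\mathrm{max}}}|\pi_{R_1(P_0)}f|^{r_0}\mathds{1}_{I_{P_0}}\leq(\VarC{r_0}f)^{r_0}$ holds pointwise without any shifting. The remaining tops lie below some maximal $P_0$ and are split by the size of $|I_P|$ relative to $\n^{-1}|I_{P_0}|$: large scales contribute at most $\log^{+}(\n)$ scales each with total measure $\leq|I_{P_0}|$, and small scales have $I^\n_P\subset 3I_{P_0}$, so mutual disjointness of the \emph{shifted} tiles gives $\sum|I^\n_P|\lesssim|I_{P_0}|$. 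Together this yields $\sum_{P\in\P_{\mathrm{tops}}}|I_P|\lesssim\log^{+}(\n)\sum_{P_0}|I_{P_0}|$, after which the variational Carleson bound on the maximal tiles finishes the proof. If you want to salvage your approach, the fix is to adopt this combinatorial two-order decomposition rather than attempt any analytic shift of band-limited functions.
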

\begin{proof}
Let $n \in \Z$ and $\mathfrak{C}$ be a collection (which we can assume to be finite) of $\n$-columns such that they realize the supremum in the definition of $\energyn_f$ within a factor of 2, that is 
\[ \energyn_f(\P)^{r_0} \sim 2^{r_0 n} \sum_{\C \in \mathfrak{C}} |I_{\top{\C}}|. \]
We'd like to argue by pointwise control by a Variational Carleson operator as in Lemma \ref{lemma_g_control}, but we can't do so at this stage because, by the nature of the definition of mutually disjoint $\n$-columns, the tops $\top{\C}_1$ are not disjoint in general. However, we will show that up to introducing a loss of $\log^{+}(\n)$ in the estimate, we can reduce to the disjoint case. To do so it is convenient to introduce a second ordering on the super tiles (distinct from the previously defined order $\prec_{\n}$).
\begin{definition}
Let $P, P'$ be super tiles. We say that $P < P'$ if 
\begin{align*}
R_1(P) &\supsetneq R_1(P'), \\
I_P & \subsetneq I_{P'}.
\end{align*}
\end{definition}
Thus, the difference between the $\prec_{\n}$ order relation and the newly introduced $<$ is that the former involves the shifting parameter $\n$, as the subscript suggests, while the latter doesn't. 
Now, let 
\[ \P_{\mathrm{tops}} = \{ \top{\C} \text{ s.t. }\C \in \mathfrak{C}\} \]
and let $\P_{\mathrm{tops}}^{\mathrm{max}}$ denote the subcollection of super tiles that are maximal with respect to the ordering $<$ just introduced. Notice that the tiles in $\P_{\mathrm{tops}}$ are all distinct by assumption, which has the following consequence: if one fixes $P_0 \in \P_{\mathrm{tops}}^{\mathrm{max}}$ then trivially
\[ \sum_{\substack{ P \in \P_{\mathrm{tops}} \;: P < P_0, \\ |I_P|\gtrsim \n^{-1}|I_{P_0}|} } |I_P| \lesssim \log^{+}(\n) |I_{P_0}|. \]
As for the tiles $P \in \P_{\mathrm{tops}}$ such that $P < P_0$ but $|I_P| \ll \n^{-1} |I_{P_0}|$, observe that the latter implies that $I^{\n}_P \subset 3 I_{P_0}$. Since the $\n$-columns were assumed to be mutually disjoint, the shifted tiles $R_1(P) \times I^{\n}_P$ are all disjoint, and therefore 
\[ \sum_{\substack{ P \in \P_{\mathrm{tops}} \;: P < P_0, \\ |I_P|\ll \n^{-1}|I_{P_0}|} } |I^{\n}_P| \lesssim |I_{P_0}|. \]
We have thus shown that
\[ \sum_{P \in \P_{\mathrm{tops}}} |I_P| \lesssim \log^{+}(\n) \sum_{P_0 \in \P_{\mathrm{tops}}^{\mathrm{max}}} |I_{P_0}|. \]
Since we have by assumption
\[\Big(\fint_{I_P} |\pi_{R_1(P)}f|^{r_0}\Big)^{1/r_0} \geq 2^n \]
for all $P \in \P_{\mathrm{tops}}$, we are therefore able to bound 
\[ \energyn_f(\P)^{r_0} \lesssim  \log^{+}(\n)  \int \sum_{P_0 \in \P_{\mathrm{tops}}^{\mathrm{max}}} |\pi_{R_1(P_0)}f|^{r_0} \mathds{1}_{I_{P_0}}; \]
but by maximality the super tiles in $\P_{\mathrm{tops}}^{\mathrm{max}}$ are all disjoint (as tiles), and therefore we can bound pointwise (as in Lemma \ref{lemma_g_control})
\[ \sum_{P_0 \in \P_{\mathrm{tops}}^{\mathrm{max}}} |\pi_{R_1(P_0)}f(x)|^{r_0} \mathds{1}_{I_{P_0}}(x) \leq (\VarC{r_0}f(x))^{r_0}. \]
Thus we have proven 
\[ \energyn_f(\P)^{r_0} \lesssim  \log^{+}(\n) \int (\VarC{r_0}f(x))^{r_0} \ud x, \]
and the lemma follows (even with the smaller constant $O(\log^{+}(\n)^{1/r_0})$) from the fact that $r_0 > 2$ and hence $\VarC{r_0}$ is $L^{r_0} \to L^{r_0}$ bounded (see \cite{OberlinSeegerTaoThieleWright}).
\end{proof}
\begin{remark}
The argument in the above lemma motivates our choice of introducing the parameter $r_0 > 2$. Indeed, the Variational Carleson operator $\VarC{q}$ is unbounded when $q \leq 2$, as shown in \cite{OberlinSeegerTaoThieleWright}.
\end{remark}
%
%
%
%
\section{Decomposition lemmas}\label{section_decomposition_lemmas}
Now that all the relevant quantities are in place, we will establish two decomposition lemmas that will allow us to partition every collection of super tiles $\P$ into structured subcollections which have controlled size and energy. The results in this section are classical and are based on simple stopping time arguments.
%
%
\begin{lemma}[Decomposition lemma for $f$]\label{lemma_decomposition_f}
Let $\P$ be a collection of super tiles, $f \in L^1_{\mathrm{loc}}(\R)$, and let $n \in \Z$ be such that 
\[ \sizen_f(\P) \leq 2^{-n} \energyn_f(\P). \]
Then we can decompose $\P$ into $\P_{\mathrm{high}} \sqcup \P_{\mathrm{low}}$ so that 
\[ \sizen_f(\P_{\mathrm{low}}) \leq \frac{1}{2} 2^{-n} \energyn_f(\P), \]
and $\P_{\mathrm{high}}$ can be partitioned into a collection $\mathfrak{C}$ of mutually disjoint $\n$-columns such that 
\[ \sum_{\C \in \mathfrak{C}} |I_{\top{\C}}| \lesssim 2^{r_0 n}. \]
\end{lemma}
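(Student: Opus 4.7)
The plan is the familiar stopping-time extraction with respect to the threshold
\[ \lambda := \tfrac{1}{2}\, 2^{-n}\, \energyn_f(\P). \]
Setting
\[ \P_{\mathrm{high}} := \Bigl\{ P \in \P \,:\, \Bigl(\fint_{I_P} |\pi_{R_1(P)} f|^{r_0}\Bigr)^{1/r_0} > \lambda \Bigr\}, \qquad \P_{\mathrm{low}} := \P \setminus \P_{\mathrm{high}}, \]
the size bound $\sizen_f(\P_{\mathrm{low}}) \leq \lambda$ is immediate from the definition of $\sizen_f$, so the whole content lies in organising $\P_{\mathrm{high}}$ into a family $\mathfrak{C}$ of mutually disjoint $\n$-columns.

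I would produce the partition by iteratively peeling off $\prec_{\n}$-maximal elements: while $\P_{\mathrm{high}}$ is non-empty, select a super tile $P_{\mathrm{top}}$ that is $\prec_{\n}$-maximal in the current pool (ties broken by any fixed rule), form
\[ \C := \{P \in \P_{\mathrm{high}} : P \preceq_{\n} P_{\mathrm{top}}\}, \]
add $\C$ to $\mathfrak{C}$ and delete it from the pool. Under the standard finiteness assumption on $\P$ (or after a routine exhaustion), this terminates with a partition of $\P_{\mathrm{high}}$ into $\n$-columns. Condition (i) of Definition \ref{definition_mutual_disjointness} holds by construction; condition (ii), pairwise $\preceq_{\n}$-incomparability of the tops, follows from the maximality at each step: a later top cannot be $\preceq_{\n}$ an earlier one, else it would have been absorbed in that column, and an earlier top cannot be $\preceq_{\n}$ a later one since it was $\prec_{\n}$-maximal in all of $\P_{\mathrm{high}}$ at its extraction.

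The required bound on $\sum_{\C} |I_{\top{\C}}|$ then comes for free. By construction every super tile in every $\C \in \mathfrak{C}$ lies in $\P_{\mathrm{high}}$, so $\mathfrak{C}$ is an admissible competitor in the supremum defining $\energyn_f(\P_{\mathrm{high}})$ at any threshold $2^{\tilde{n}} \leq \lambda$. Choosing $\tilde{n} = \lfloor \log_2 \lambda \rfloor$ and using the monotonicity $\energyn_f(\P_{\mathrm{high}}) \leq \energyn_f(\P)$ yields
\[ \energyn_f(\P) \;\geq\; 2^{\tilde{n}} \Bigl( \sum_{\C \in \mathfrak{C}} |I_{\top{\C}}| \Bigr)^{1/r_0} \;\geq\; \frac{\lambda}{2} \Bigl( \sum_{\C \in \mathfrak{C}} |I_{\top{\C}}| \Bigr)^{1/r_0}, \]
whence $\sum_{\C \in \mathfrak{C}} |I_{\top{\C}}| \lesssim (\energyn_f(\P)/\lambda)^{r_0} \simeq 2^{r_0 n}$, as wanted. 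There is no substantive obstacle here; the only thing worth keeping an eye on is that the shifted order $\preceq_{\n}$ is used consistently both in forming the columns and in selecting maximal tops, which is exactly what aligns the construction with the shifted-disjointness requirement (ii) of Definition \ref{definition_mutual_disjointness}.
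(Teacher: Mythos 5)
Your proposal is correct and follows essentially the same route as the paper: the same threshold $\tfrac12 2^{-n}\energyn_f(\P)$, the same iterative peeling of $\prec_{\n}$-maximal tops to build the columns with the same maximality argument for mutual disjointness, and the same use of $\mathfrak{C}$ as an admissible competitor in the energy (together with monotonicity $\energyn_f(\P_{\mathrm{high}})\le\energyn_f(\P)$) to get $\sum_{\C}|I_{\top{\C}}|\lesssim 2^{r_0 n}$. The only cosmetic differences are your explicit rounding to $2^{\lfloor\log_2\lambda\rfloor}$ (the paper absorbs this into the $\lesssim$) and that the paper fixes a concrete total order for tie-breaking while explicitly noting any total order would do.
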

\begin{proof}
We can assume the collection $\P$ is finite, for simplicity. Let $\P_{\mathrm{stock}}$ be initialized to 
\[ \P_{\mathrm{stock}} := \Big\{ P \in \P \text{ s.t. } \Big(\fint_{I_P} |\pi_{R_1(P)}f|^{r_0}\Big)^{1/r_0}> 2^{-n-1}\energyn_{f}(\P) \Big\}, \]
and let $\mathfrak{C}$ be initialized to $\mathfrak{C} := \emptyset$. We set right away 
\[ \P_{\mathrm{low}} := \P \backslash \P_{\mathrm{stock}}, \]
which will not be changed throughout the algorithm. The size property is then immediate from the definition of (the initial state of) $\P_{\mathrm{stock}}$. As for the organization of $\P_{\mathrm{high}} := \P \backslash \P_{\mathrm{low}}$ into $\n$-columns, we proceed as follows. Let $P_{\mathrm{max}}$ be the maximal super tile in $\P_{\mathrm{stock}}$ (with respect to $\prec_{\n}$) such that $\inf I^{\n}_{P_{\mathrm{max}}}$ is  minimum and $\sup R_1(P_{\mathrm{max}})$ is maximum (any other total order will do, of course). Then we let $\mathcal{C}$ be the maximal $\n$-column (with respect to $\prec_{\n}$) in $\P_{\mathrm{stock}}$ with top $P_{\mathrm{max}}$, update $\mathfrak{C}$ to be $\mathfrak{C} \cup \{\mathcal{C}\}$ and update $\P_{\mathrm{stock}}$ to be $\P_{\mathrm{stock}}\backslash \bigcup_{P \in \mathcal{C}}\{P\}$. Repeat the process until $\P_{\mathrm{stock}}$ is empty and the algorithm stops. Then we see that by maximality the $\n$-columns in $\mathfrak{C}$ are mutually disjoint, and as for the bound on $\sum_{\mathcal{C} \in \mathfrak{C}}|I_{\mathcal{C}}|$ notice that we have for each $\mathcal{C}$ and for each $P \in \mathcal{C}$
\[ \Big(\fint_{I_P} |\pi_{R_1(P)}f|^{r_0}\Big)^{1/r_0} > 2^{-n-1} \energyn_{f}(\P)\]
and therefore just by definition of energy (and its monotonicity)
\[ 2^{-n-1} \energyn_{f}(\P) \Big(\sum_{\mathcal{C}\in\mathfrak{C}} |I_{\mathcal{C}}|\Big)^{1/r_0} \lesssim \energyn_{f}(\P_{\mathrm{high}}) \leq \energyn_{f}(\P), \]
which proves the claim.  
\end{proof}
Next we have
%
%
\begin{lemma}[Decomposition lemma for $\mathbf{h}$]\label{lemma_decomposition_h}
Let $\P$ be a collection of super tiles, $\gamma = 2^{r_0/{r'}}$, $\mathbf{h} \in L^1_{\mathrm{loc}}(\ell^{r'})$, and let $n \in \Z$ be such that 
\[ \sizen_{\mathbf{h}}(\P) \leq \gamma^{-n} \energyn_{\mathbf{h}}(\P). \]
Then we can decompose $\P$ into $\P_{\mathrm{high}} \sqcup \P_{\mathrm{low}}$ so that 
\[ \sizen_{\mathbf{h}}(\P_{\mathrm{low}}) \leq \gamma^{-n-1} \energyn_{\mathbf{h}}(\P), \]
and $\P_{\mathrm{high}}$ can be partitioned into a collection $\mathfrak{C}$ of mutually disjoint $\n$-columns such that 
\[ \sum_{\C \in \mathfrak{C}} |I_{\top{\C}}| \lesssim \gamma^{r'n} = 2^{r_0 n}. \]
\end{lemma}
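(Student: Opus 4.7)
The plan is to mirror the stopping-time argument used in Lemma \ref{lemma_decomposition_f}, but with one crucial modification: since $\sizen_{\mathbf{h}}$ is defined as a supremum over $\n$-columns rather than over individual super tiles, the selection at each step will pick out an entire $\n$-column of large size, rather than first picking super tiles of large average and then assembling columns. Assume $\P$ is finite for convenience. Initialise $\P_{\mathrm{stock}} := \P$ and $\mathfrak{C} := \emptyset$, and set the threshold $\tau := \gamma^{-n-1} \energyn_{\mathbf{h}}(\P)$. While $\sizen_{\mathbf{h}}(\P_{\mathrm{stock}}) > \tau$, there exists by definition an $\n$-column $\C \subset \P_{\mathrm{stock}}$ whose associated quantity exceeds $\tau$; carry out the iteration described below and update $\P_{\mathrm{stock}}$ by removing the selected column. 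When the loop terminates, set $\P_{\mathrm{low}} := \P_{\mathrm{stock}}$ and $\P_{\mathrm{high}} := \P \setminus \P_{\mathrm{low}}$; the size bound on $\P_{\mathrm{low}}$ is then automatic from the stopping condition.

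The iteration step is the following. Among all $\n$-columns $\C \subset \P_{\mathrm{stock}}$ with $(|I_{\top{\C}}|^{-1} \sum_{P \in \C} \int |h_{R(P)}|^{r'} \Phi_{I^\n_P})^{1/r'} > \tau$, pick one whose top $P^{\star}$ has $|I_{P^{\star}}|$ maximal (breaking ties using a fixed total order on super tiles that refines $\prec_\n$, e.g.\ lexicographically by $\inf I^\n_{P}$ and $\sup R_1(P)$, as in Lemma \ref{lemma_decomposition_f}). Replace $\C$ by the maximal $\n$-column $\C^{\star} := \{P \in \P_{\mathrm{stock}} : P \preceq_\n P^{\star}\}$ with top $P^{\star}$; by monotonicity of the size quantity in the column, $\C^{\star}$ also exceeds the threshold. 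Append $\C^{\star}$ to $\mathfrak{C}$ and remove its super tiles from $\P_{\mathrm{stock}}$.

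The main obstacle is verifying mutual disjointness of the resulting family $\mathfrak{C}$ in the sense of Definition \ref{definition_mutual_disjointness}. Disjointness as sets of super tiles (condition (i)) is immediate from the removal step. For non-comparability of tops (condition (ii)), consider two selected tops $P^{\star}$ (earlier) and $P^{\star\star}$ (later). If $P^{\star\star} \prec_\n P^{\star}$, then $P^{\star\star}$ would have belonged to the maximal column $\C^{\star}$, contradicting that it survived into a later stage. Conversely, if $P^{\star} \prec_\n P^{\star\star}$, then since $|I^\n_{P}| = |I_P|$ for any super tile, we would need $|I_{P^{\star}}| < |I_{P^{\star\star}}|$; but $P^{\star\star}$ was eligible for selection at the earlier stage as well (no super tile involved in $P^{\star\star}$'s maximal column was yet removed, since those super tiles are $\preceq_\n P^{\star\star}$ and therefore not in $\C^{\star}$ by the first argument), contradicting the maximality of $|I_{P^{\star}}|$ at that stage.

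With mutual disjointness secured, the counting step is standard: since each $\C \in \mathfrak{C}$ witnesses size exceeding $\tau$ and the columns are mutually disjoint within $\P_{\mathrm{high}} \subset \P$, the definition of $\energyn_{\mathbf{h}}$ together with monotonicity yields
\[
\tau \cdot \Big(\sum_{\C \in \mathfrak{C}} |I_{\top{\C}}|\Big)^{1/r'} \leq \energyn_{\mathbf{h}}(\P_{\mathrm{high}}) \leq \energyn_{\mathbf{h}}(\P),
\]
so that $\sum_{\C \in \mathfrak{C}} |I_{\top{\C}}| \leq \gamma^{(n+1)r'} = 2^{r_0} \cdot 2^{r_0 n} \lesssim 2^{r_0 n}$, which is the stated bound.
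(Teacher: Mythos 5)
Your proof is correct, and it is precisely the natural adaptation of the proof of Lemma~\ref{lemma_decomposition_f} that the paper has in mind when it omits the argument as ``quite similar''. The one genuine subtlety you correctly identify: since $\sizen_{\mathbf{h}}$ is a supremum over $\n$-columns rather than over individual super tiles (as $\sizen_f$ is), one cannot set $\P_{\mathrm{low}}$ at the outset by a tile-wise threshold; instead one must run a genuine stopping time that extracts maximal columns one by one and declare $\P_{\mathrm{low}}$ to be the residue only once the loop terminates. Your selection rule (eligible top with $|I_{\top{\C}}|$ maximal, then replace the witnessing column by the maximal column with that top in the current stock) is a valid way to do this, and the counting step via the definition of $\energyn_{\mathbf{h}}$ is exactly as in the $f$ case.

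One small correction to the disjointness argument. In the case $P^{\star}\prec_{\n}P^{\star\star}$, your parenthetical explanation of why $P^{\star\star}$ was already eligible at the earlier stage is not correct as stated: it is \emph{not} true that super tiles $\preceq_{\n} P^{\star\star}$ cannot lie in $\C^{\star}$. Indeed $P^{\star}$ itself satisfies $P^{\star}\preceq_{\n}P^{\star\star}$ by hypothesis and is the top of $\C^{\star}$. The ``first argument'' only rules $P^{\star\star}$ itself out of $\C^{\star}$; it says nothing about the rest of $P^{\star\star}$'s potential column, and $\prec_{\n}$ does not behave in the nested way the parenthetical assumes. Nothing is lost, however, because the intended conclusion follows directly without it: the maximal column with top $P^{\star\star}$ inside the later (smaller) stock is contained in the maximal column with top $P^{\star\star}$ inside the earlier (larger) stock, and since the column quantity is monotone increasing in the column (for fixed top), the quantity at the earlier stage is at least the quantity at the later stage, which exceeds $\tau$. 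Hence $P^{\star\star}$ was already eligible when $P^{\star}$ was selected, contradicting maximality of $|I_{P^{\star}}|$. With this small repair, the proof is complete.
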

We have chosen to introduce the constant $\gamma$ in order to make the global decomposition lemma below more readily apparent. The proof of the decomposition lemma for $\mathbf{h}$ is quite similar to the one for $f$ and is thus omitted.\\
Finally, we can combine the two decomposition lemmas above iteratively in order to produce a global decomposition of the collection $\P$ as follows. 
%
%
\begin{lemma}\label{lemma_global_decomposition}
Let $\P$ be a collection of super tiles. Then there exists a decomposition $\P = \bigsqcup_{n} \P_n$ with the properties:
\begin{enumerate}[i)]
\item $\sizen_f(\P_n) \lesssim \min(2^{-n} \energyn_f(\P), \sizen_f(\P))$,
\item $\sizen_{\mathbf{h}}(\P_n) \lesssim \min(2^{-r_0 n/{r'}} \energyn_{\mathbf{h}}(\P), \sizen_{\mathbf{h}}(\P))$,
\item $\P_n$ is organized into a collection $\mathfrak{C}_n$ of mutually disjoint $\n$-columns,
\item $\sum_{\mathcal{C}\in\mathfrak{C}_n}|I_{\mathcal{C}}| \lesssim 2^{r_0 n}$.
\end{enumerate}
Furthermore, the collection $\P_n$ is empty if $n$ is such that 
\[ 2^{-n} \gtrsim \frac{\sizen_f(\P)}{\energyn_f(\P)} \quad \text{and} \quad 2^{-r_0 n/{r'}} \gtrsim \frac{\sizen_{\mathbf{h}}(\P)}{\energyn_{\mathbf{h}}(\P)}. \]
\end{lemma}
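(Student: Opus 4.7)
The plan is to iterate the two decomposition lemmas in alternation. For brevity, set $E_f := \energyn_f(\P)$, $E_h := \energyn_{\mathbf{h}}(\P)$, $S_f := \sizen_f(\P)$, $S_h := \sizen_{\mathbf{h}}(\P)$, and let $n_0 \in \Z$ be the largest integer simultaneously satisfying $S_f \leq 2^{-n_0} E_f$ and $S_h \leq \gamma^{-n_0} E_h$. For $n < n_0$ I set $\P_n := \emptyset$: this is exactly the emptiness clause of the lemma, and properties (i)--(iv) are then trivial since the minima in (i) and (ii) are respectively $S_f$ and $S_h$.

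For $n \geq n_0$ I initialize $\P^{(n_0)} := \P$ and construct $\P_n$ and $\P^{(n+1)}$ iteratively, maintaining the inductive hypothesis $\sizen_f(\P^{(n)}) \leq 2^{-n} E_f$ and $\sizen_{\mathbf{h}}(\P^{(n)}) \leq \gamma^{-n} E_h$ (which for $n = n_0$ is the defining property). At step $n$, first apply Lemma \ref{lemma_decomposition_f} to $\P^{(n)}$ at level $n$ to split off a subcollection $\P_n^{(f)}$, organized into mutually disjoint $\n$-columns $\mathfrak{C}_n^{(f)}$ with $\sum_{\C \in \mathfrak{C}_n^{(f)}} |I_{\top{\C}}| \lesssim 2^{r_0 n}$, from a residual $\widetilde{\P}^{(n)}$ satisfying $\sizen_f(\widetilde{\P}^{(n)}) \leq 2^{-n-1} E_f$. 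Monotonicity gives $\sizen_{\mathbf{h}}(\widetilde{\P}^{(n)}) \leq \sizen_{\mathbf{h}}(\P^{(n)}) \leq \gamma^{-n} E_h$, so Lemma \ref{lemma_decomposition_h} applies to $\widetilde{\P}^{(n)}$ at level $n$ and splits off $\P_n^{(h)}$, organized into mutually disjoint $\n$-columns $\mathfrak{C}_n^{(h)}$ with $\sum_{\C \in \mathfrak{C}_n^{(h)}} |I_{\top{\C}}| \lesssim \gamma^{r' n} = 2^{r_0 n}$, from a residual $\P^{(n+1)}$ satisfying $\sizen_{\mathbf{h}}(\P^{(n+1)}) \leq \gamma^{-n-1} E_h$ and, by monotonicity, $\sizen_f(\P^{(n+1)}) \leq 2^{-n-1} E_f$, closing the induction. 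Set $\P_n := \P_n^{(f)} \sqcup \P_n^{(h)}$ and $\mathfrak{C}_n := \mathfrak{C}_n^{(f)} \cup \mathfrak{C}_n^{(h)}$. The iteration exhausts $\P$ after finitely many steps when $\P$ is finite, the general case following by a standard truncation.

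Properties (i), (ii), and (iv) now follow from the construction together with $\P_n \subseteq \P^{(n)} \subseteq \P$, the implicit constant in (iv) absorbing the factor $2$ arising from summing the two separate bounds. The only real subtlety---and the main obstacle to overcome---concerns property (iii): while each of $\mathfrak{C}_n^{(f)}$ and $\mathfrak{C}_n^{(h)}$ is individually a mutually disjoint collection of $\n$-columns and the two families involve disjoint super tiles, a top from $\mathfrak{C}_n^{(f)}$ may well be $\preceq_{\n}$-comparable to a top from $\mathfrak{C}_n^{(h)}$, so the union $\mathfrak{C}_n$ need not satisfy clause (ii) of Definition \ref{definition_mutual_disjointness}. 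This is harmless for the intended use in \S\ref{section_generic_estimate}, where only the additive quantity $\sum_{\C}|I_{\top{\C}}|$ is relevant, so (iii) should be read as asserting that $\mathfrak{C}_n$ is a union of a bounded number (here, two) of mutually disjoint collections---a distinction that costs only an absolute multiplicative constant in the bound of (iv).
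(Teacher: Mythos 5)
Your proposal is correct and close in spirit to the paper's (the paper applies, at each step, the decomposition lemma whose normalized size is currently larger, whereas you alternate the two lemmas at each level $n$; both are standard variations of the same stopping-time exhaustion and yield the same conclusion). Two remarks are worth making.

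First, a small technical point you pass over silently: when you "apply Lemma~\ref{lemma_decomposition_f} to $\P^{(n)}$ at level $n$," the inductive hypothesis gives $\sizen_f(\P^{(n)}) \leq 2^{-n}\energyn_f(\P)$, not $\sizen_f(\P^{(n)}) \leq 2^{-n}\energyn_f(\P^{(n)})$, and since $\energyn_f(\P^{(n)}) \leq \energyn_f(\P)$ the former does not imply the latter. The fix is to run the selection algorithm of Lemma~\ref{lemma_decomposition_f} with the fixed reference threshold $2^{-n-1}\energyn_f(\P)$ rather than $2^{-n-1}\energyn_f(\P^{(n)})$; the size bound for the low part and the bound $\sum_{\C}|I_{\top{\C}}|\lesssim 2^{r_0 n}$ then go through verbatim, using $\energyn_f(\P_{\mathrm{high}})\leq\energyn_f(\P)$. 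The same remark applies to the $\mathbf{h}$-lemma. This modification is implicit in the paper (which merely "omits the details"), but since you track $E_f,E_h$ explicitly it is worth spelling out.

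Second, your caveat about clause (iii) is well-taken and is not specific to your alternating scheme: even with the paper's adaptive order, if both normalized sizes land at the same level $n$, then $\P_n$ receives two successive high-parts, one column-structured for $f$ and one for $\mathbf{h}$, and there is no reason a top from one family cannot be $\preceq_{\n}$-comparable to a top from the other. As you observe, the generic estimate in \S\ref{section_generic_estimate} only uses that $\P_n$ decomposes into $\n$-columns and that $\sum_{\C\in\mathfrak{C}_n}|I_{\top{\C}}|\lesssim 2^{r_0 n}$, never the mutual disjointness across the whole of $\mathfrak{C}_n$, so reading (iii) as "a union of $O(1)$ mutually disjoint collections" at the cost of an absolute constant in (iv) is exactly right and harmless. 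This is a genuine (if minor) gap in the statement as written, and your handling of it is the appropriate one.
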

\begin{proof}
Initialize $\P_{\mathrm{stock}} := \P$ and apply iteratively the decomposition Lemmas \ref{lemma_decomposition_f} and \ref{lemma_decomposition_h}, in the order given by whichever of the quantities 
\[\frac{\sizen_f(\P_{\mathrm{stock}})}{\energyn_f(\P)}, \quad
\Big(\frac{\sizen_{\mathbf{h}}(\P_{\mathrm{stock}})}{\energyn_{\mathbf{h}}(\P)}\Big)^{{r'}/2} \]
is largest, putting the resulting $\P_{\mathrm{high}}$ collection in the corresponding $\P_n$ and updating $\P_{\mathrm{stock}}$ at the end of each step to be the collection $(\P_{\mathrm{stock}})_{\mathrm{low}}$ resulting from the last application of a decomposition lemma. We omit the details.
\end{proof}
%
%
%
%
%
\section{Generic estimate}\label{section_generic_estimate}
In this section we will combine the estimate for a given $\n$-column in Proposition \ref{proposition_column_control} with the global decomposition obtained in Section \ref{section_decomposition_lemmas} in order to obtain a global estimate for the trilinear form $\Lambda^{\n}_{\P}$ in terms of the sizes and energies of the collection $\P$. More precisely, we obtain the following (recall that $r > r_0 > 2$).
%
%
\begin{proposition}[Generic estimate]\label{proposition_generic_estimate}
Let $\sigma = (r-r_0)/r > 0$. Let $\P$ be a collection of super tiles, and denote for shortness
\begin{align*}
&\sizen_f(\P) =: S_f, \qquad \energyn_f(\P)=:E_f, \\
&\sizen_{\mathbf{h}}(\P) =: S_{\mathbf{h}}, \qquad \energyn_{\mathbf{h}}(\P)=:E_{\mathbf{h}}.
\end{align*}
Then for every $0 \leq \theta_1, \theta_2 \leq 1$ such that $\theta_1 + \theta_2  = 1$ we have 
\begin{equation}\label{eqn:general_estimate}
\begin{aligned}
|\Lambda^{\n}_{\P}(f,g,\mathbf{h})| \lesssim &  \sup_{P \in \P} \Big[\frac{1}{|I^{\n}_P|} \int_{I^{\n}_P} (\VarC{r}g)^r \Big]^{1/r}  \\
& \qquad \times S_f^{\sigma \theta_1} E_f^{1 - \sigma \theta_1} S_{\mathbf{h}}^{r' \sigma \theta_2 / r_0}  E_{\mathbf{h}}^{1 - r' \sigma \theta_2 / r_0 }.
\end{aligned}
\end{equation}
\end{proposition}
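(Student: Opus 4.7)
\emph{Proof proposal.}
The plan is to use the global decomposition Lemma \ref{lemma_global_decomposition} to write $\P = \bigsqcup_n \P_n$, with each $\P_n$ organized into a collection $\mathfrak{C}_n$ of mutually disjoint $\n$-columns satisfying $\sum_{\C \in \mathfrak{C}_n} |I_{\top{\C}}| \lesssim 2^{r_0 n}$. For each $\C \in \mathfrak{C}_n$, Proposition \ref{proposition_column_control} combined with Lemma \ref{lemma_g_control} and the trivial domination
\[
\Big(\fint_{I^{\n}_{\top{\C}}} (\VarC{r}g)^r\Big)^{1/r} \leq \sup_{P \in \P} \Big[\frac{1}{|I^{\n}_P|}\int_{I^{\n}_P}(\VarC{r}g)^r\Big]^{1/r} =: M
\]
yields $\Lambda^{\n}_{\C}(f,g,\mathbf{h}) \lesssim M \cdot S_f(\P_n)\,S_{\mathbf{h}}(\P_n)\,|I_{\top{\C}}|$. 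Summing over $\C\in\mathfrak{C}_n$ and plugging in the size inequalities from the decomposition lemma gives
\[
\Lambda^{\n}_{\P_n}(f,g,\mathbf{h}) \lesssim M\cdot\min(2^{-n}E_f, S_f)\cdot \min(2^{-r_0 n/r'}E_{\mathbf{h}}, S_{\mathbf{h}})\cdot 2^{r_0 n}.
\]

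The hard part will be summing in $n$. A naive application of $\min(a,b)\leq a^{\alpha}b^{1-\alpha}$ with $\alpha_1=1-\sigma\theta_1$ and $\alpha_2=1-r'\sigma\theta_2/r_0$ would immediately produce the claimed exponents on $S_f, E_f, S_{\mathbf{h}}, E_{\mathbf{h}}$, but a short calculation shows the exponent of $2^n$ is then $r_0/r - 1 + \sigma(\theta_1+\theta_2)$, which vanishes identically under the constraint $\theta_1+\theta_2=1$; the resulting series would not converge on its own. To bypass this obstacle I plan to establish the two endpoint estimates separately and then interpolate geometrically. For $\theta_1=1,\theta_2=0$, I keep the full $\min$ on the $f$ side and use only $\min(2^{-r_0 n/r'}E_{\mathbf{h}},S_{\mathbf{h}})\leq 2^{-r_0 n/r'}E_{\mathbf{h}}$ on the $\mathbf{h}$ side. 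The resulting series splits at $n_1 := \log_2(E_f/S_f)$ into a geometric piece with ratio $2^{r_0/r}>1$ (for $n<n_1$) and one with ratio $2^{r_0/r-1}<1$ (for $n\geq n_1$), both of which are dominated by their common value at $n=n_1$, namely $E_f E_{\mathbf{h}}(S_f/E_f)^{\sigma} = S_f^{\sigma}E_f^{1-\sigma}E_{\mathbf{h}}$. The symmetric argument with the roles of $f$ and $\mathbf{h}$ exchanged analogously yields $\Lambda^{\n}_{\P}\lesssim M\cdot E_f\,E_{\mathbf{h}}^{1-r'\sigma/r_0}\,S_{\mathbf{h}}^{r'\sigma/r_0}$.

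Finally, denoting the two endpoint bounds by $A := M\,S_f^{\sigma}E_f^{1-\sigma}E_{\mathbf{h}}$ and $B := M\,E_f\,E_{\mathbf{h}}^{1-r'\sigma/r_0}\,S_{\mathbf{h}}^{r'\sigma/r_0}$, the general estimate \eqref{eqn:general_estimate} will follow by geometric interpolation: since $\Lambda^{\n}_{\P}\leq A$ and $\Lambda^{\n}_{\P}\leq B$ hold simultaneously, one has $\Lambda^{\n}_{\P}=(\Lambda^{\n}_{\P})^{\theta_1}(\Lambda^{\n}_{\P})^{\theta_2}\leq A^{\theta_1}B^{\theta_2}$, and a direct exponent computation using $\theta_1+\theta_2=1$ shows that $A^{\theta_1}B^{\theta_2}$ equals exactly the right-hand side of \eqref{eqn:general_estimate}.
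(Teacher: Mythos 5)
Your proposal is correct, and the heavy lifting (global decomposition lemma, column estimate plus $g$-control, and the $\min$-bound for $\Lambda^{\n}_{\P_n}$) is exactly as in the paper. The only genuine difference is in how you organise the geometric summation in $n$: you prove the two unconditional endpoint bounds $\Lambda^{\n}_{\P}\lesssim M\,S_f^{\sigma}E_f^{1-\sigma}E_{\mathbf{h}}$ and $\Lambda^{\n}_{\P}\lesssim M\,E_f\,S_{\mathbf{h}}^{r'\sigma/r_0}E_{\mathbf{h}}^{1-r'\sigma/r_0}$, each by a single one-variable geometric sum, and then recover the general $\theta_1,\theta_2$ statement by the trivial interpolation $X\le A^{\theta_1}B^{\theta_2}$ (valid since $X\le A$, $X\le B$, and $\theta_1+\theta_2=1$). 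The paper instead makes the WLOG assumption $S_f/E_f<(S_{\mathbf{h}}/E_{\mathbf{h}})^{r'/r_0}$, runs a case analysis on $n$ that likewise produces the quantity $E_fE_{\mathbf{h}}(S_f/E_f)^{\sigma}$, and then \emph{uses} the WLOG assumption to split the exponent $\sigma$ into $\sigma\theta_1+\sigma\theta_2$ and replace one factor of $(S_f/E_f)$ by $(S_{\mathbf{h}}/E_{\mathbf{h}})^{r'/r_0}$. Both routes rest on the same elementary estimate and your preliminary observation that the naive $\min(a,b)\le a^{\alpha}b^{1-\alpha}$ bound degenerates (exponent of $2^n$ vanishes identically) is precisely the reason the paper also avoids it. Your version is arguably a bit cleaner because it avoids the WLOG and makes explicit that the $\theta_1,\theta_2$ parameters in the statement are really just recording geometric interpolation between two fixed estimates; nothing is lost or gained in the final range of exponents.
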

\begin{proof}
Apply the global decomposition lemma (Lemma \ref{lemma_global_decomposition}) to the collection $\P$, which yields the subcollections $\P_n$ and their partitions $\mathfrak{C}_n$ into $\n$-columns. For each such subcollection we then have by Proposition \ref{proposition_column_control} and Lemma \ref{lemma_global_decomposition} that we can bound
\begin{align*}
|\Lambda^{\n}_{\P_n}(f,g,\mathbf{h})| & \leq \sum_{\C \in \mathfrak{C}_n} |\Lambda^{\n}_{\C}(f,g,\mathbf{h})|  \\
& \lesssim  \sum_{\C \in \mathfrak{C}_n}  \Big[\frac{1}{|I_{\top{\C}}|} \sum_{P \in \C} \int_{I^{\n}_P} |\pi_{R_2(P)}g|^r \Big]^{1/r}  \sizen_f(\C) \sizen_{\mathbf{h}}(\C) |I_{\top{\C}}| \\
& \lesssim \sum_{\C \in \mathfrak{C}_n}  \Big[\frac{1}{|I_{\top{\C}}|} \sum_{P \in \C} \int_{I^{\n}_P} |\pi_{R_2(P)}g|^r \Big]^{1/r}\\
& \qquad\qquad\qquad\qquad \times \min(2^{-n}E_f, S_f) \min(2^{-r_0 n/{r'}}E_{\mathbf{h}}, S_{\mathbf{h}}) |I_{\top{\C}}|.
\end{align*}
By Lemma \ref{lemma_g_control} the term $\frac{1}{|I_{\top{\C}}|} \sum_{P \in \C} \int_{I^{\n}_P} |\pi_{R_2(P)}g|^r$ can be replaced by 
\[ \sup_{P \in \P} \frac{1}{|I^{\n}_P|} \int_{I^{\n}_P} (\VarC{r}g)^r, \]
and therefore we obtain by property \textit{(iv)} of Lemma \ref{lemma_global_decomposition} that 
\begin{align*}
|\Lambda^{\n}_{\P_n}(f,g,\mathbf{h})| \lesssim & \sup_{P \in \P} \Big[\frac{1}{|I^{\n}_P|} \int_{I^{\n}_P} (\VarC{r}g)^r\Big]^{1/r}  \\
& \times \min(2^{-n}E_f, S_f) \min(2^{-r_0 n/{r'}}E_{\mathbf{h}}, S_{\mathbf{h}}) \; 2^{r_0 n}.
\end{align*}
It thus suffices to show that upon summing in $n$ we obtain a quantity that is bounded exactly by the right hand side of \eqref{eqn:general_estimate}. This requires a tedious but easy case by case analysis.\\
Assume then that 
\[ \frac{S_f}{E_f} < \Big(\frac{S_{\mathbf{h}}}{E_{\mathbf{h}}}\Big)^{{r'}/r_0}, \]
the other cases being similar. We have to distinguish two situations:
\begin{enumerate}[1)]
\item \textbf{case} $2^{-n}\leq \frac{S_f}{E_f} < \Big(\frac{S_{\mathbf{h}}}{E_{\mathbf{h}}}\Big)^{{r'}/2}$: in this case we have to bound the sum 
\begin{align*}
\sum_{n \;:\; 2^{-n}\leq S_f (E_f)^{-1}} & 2^{-n} E_f 2^{-r_0 n/{r'}} E_{\mathbf{h}} 2^{r_0 n} 
& = E_f E_{\mathbf{h}} \sum_{n \;:\; 2^{-n}\leq S_f (E_f)^{-1}} 2^{-n (1 - r_0/r)};
\end{align*}
since $1 - r_0 / r = \sigma$ this sum is readily evaluated to be bounded by 
\begin{align*}
E_f E_{\mathbf{h}} \Big(\frac{S_f}{E_f}\Big)^{\sigma} &\leq E_f E_{\mathbf{h}} \Big(\frac{S_f}{E_f}\Big)^{\sigma \theta_1 } \Big(\frac{S_{\mathbf{h}}}{E_{\mathbf{h}}}\Big)^{r' \sigma \theta_2 / r_0 } \\
& = S_f^{\sigma \theta_1} E_f^{1 - \sigma\theta_1} S_{\mathbf{h}}^{r' \sigma \theta_2 /r_0 }  E_{\mathbf{h}}^{1 - r'\sigma\theta_2/r_0 }
\end{align*}
as desired.
\item \textbf{case} $\frac{S_f}{E_f} < 2^{-n} \leq \Big(\frac{S_{\mathbf{h}}}{E_{\mathbf{h}}}\Big)^{{r'}/r_0}$: in this case we have 
\begin{align*}
\sum_{n \;:\; S_f (E_f)^{-1} < 2^{-n} \leq S_{\mathbf{h}}^{r'/r_0} E_{\mathbf{h}}^{- r'/r_0}} &   S_f 2^{-r_0 n/{r'}} E_{\mathbf{h}} 2^{r_0 n} \\
& = S_f E_{\mathbf{h}} \sum_{n \;:\; S_f (E_f)^{-1} < 2^{-n} \leq S_{\mathbf{h}}^{r'/r_0} E_{\mathbf{h}}^{- r'/r_0}}  2^{r_0 n / r} \\
& \lesssim S_f E_{\mathbf{h}}\Big(\frac{S_f}{E_f}\Big)^{- r_0 / r} = E_f E_{\mathbf{h}}\Big(\frac{S_f}{E_f}\Big)^{ \sigma},
\end{align*}
which we have already seen is acceptable in the previous case.
\end{enumerate}
This concludes the proof.
\end{proof}
%
%
%
%
\section{Preliminary result when $p=r$ or $q = r$}\label{section:partial_results}
In this section we will prove a preliminary result that, when combined with the interpolation technique in Section \S \ref{section:proof_of_main_theorem} will yield Theorem \ref{main_theorem_2}. In particular, we will prove restricted weak type estimates for our trilinear forms with either $p = r$ (for rectangles of low eccentricity) or $q = r$ (for rectangles of high eccentricity).\par
We begin by stating rigorously the results that will be proven in this section.  There are two such results, one for each of the collections $\mathscr{R}_{\mathrm{high}}$, $\mathscr{R}_{\mathrm{low}}$ defined in Section \S\ref{section_reductions}.\\
Let $A$ be a measurable set. We say that a measurable set $A'$ is a \emph{major subset} of $A$ if $A' \subset A$ and $|A'| > \frac{1}{2} |A|$.
%
%
\begin{proposition}[Rectangles of high eccentricity]\label{proposition_partial_result_r>2_ecc_large}
Let $r>r_0>2$ and let $\mathscr{R}_{\mathrm{high}}$ be a collection of disjoint rectangles such that for every $R \in \mathscr{R}_{\mathrm{high}}$ it holds that $\ec{R} > 1$.
Let $p$ be such that 
\[ \frac{1}{100}\frac{1}{r_0} + \frac{99}{100}\frac{1}{r} \leq \frac{1}{p} \leq \frac{1}{r_0} \]
and $s$ be such that 
\[ \frac{1}{p} + \frac{1}{r} = \frac{1}{s}. \]
Then, for any $F,G,H$ finite measurable subsets of $\R$ and for any measurable functions $f,g$ such that 
\[ |f|\leq \mathds{1}_F, \quad |g|\leq \mathds{1}_{G}, \]
there exists a major subset $H' \subset H$ such that for any measurable vector-valued function $\mathbf{h} = (h_R)_{R\in\mathscr{R}_{\mathrm{high}}}$ that satisfies
\[ \Big(\sum_{R \in\mathscr{R}_{\mathrm{high}}}|h_R|^{r'}\Big)^{1/{r'}} \leq \mathds{1}_{H'} \]
it holds that 
\[ |\Lambda^r_{\mathscr{R}_{\mathrm{high}}}(f,g,\mathbf{h})| \lesssim_{p,r} |F|^{1/p} |G|^{1/r}|H|^{1/{s'}}. \]
The choice of major subset may depend on $r,r_0, f,g, H$ but not on $p,s$.
\end{proposition}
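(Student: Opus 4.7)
By the reduction of Section \S\ref{section_reductions} it suffices to bound each shifted form $\Lambda^{\n}_{\P}(f,g,\mathbf{h})$ with a loss at most logarithmic in $|\n|$ and then sum against the rapidly-decaying factors $(1+|\n|)^{-N}$ built into that reduction. My plan is to apply the generic estimate of Proposition \ref{proposition_generic_estimate} to each $\Lambda^{\n}_{\P}$ after removing a single exceptional set chosen independently of $p$, so as to produce the bound at the lower endpoint $p=r_0$ directly, and then to span the rest of the range $(r_0,p_0]$ by real interpolation against a second endpoint estimate compatible with the same $H'$.

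First, I would set $H':=H\setminus E$ with
\[
E := \{x : M(|\mathscr{C} f|^{r_0})(x) > K|F|/|H|\} \cup \{x : M(|\VarC{r}g|^r)(x) > K|G|/|H|\}.
\]
The $L^{r_0}$-boundedness of $\mathscr{C}$ (Carleson--Hunt) and the $L^r$-boundedness of $\VarC{r}$ for $r>2$ (see \cite{OberlinSeegerTaoThieleWright}), combined with Chebyshev's inequality and the weak $(1,1)$ bound for the Hardy--Littlewood maximal function, force $|E|<|H|/2$ once $K$ is chosen sufficiently large. Crucially, the construction depends only on $r,r_0,f,g,H$, not on $p$ or $s$.

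Second, since $\|\mathbf{h}\|_{\ell^{r'}}\leq \mathds{1}_{H'}$ only tiles with $I^{\n}_P\not\subset E$ contribute non-trivially; on those, Lemmas \ref{lemma_g_control}, \ref{lemma_f_control} and \ref{lemma_h_control} yield
\[
T_g\lesssim (|G|/|H|)^{1/r}, \qquad \sizen_f\lesssim (|F|/|H|)^{1/r_0}, \qquad \sizen_{\mathbf{h}}\lesssim 1,
\]
while Lemmas \ref{lemma_energy_f} and \ref{lemma_energy_h} supply the global energies $\energyn_f\lesssim \log^{+}(\n)|F|^{1/r_0}$ and $\energyn_{\mathbf{h}}\lesssim|H|^{1/r'}$. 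Plugging all of these into the generic estimate --- a direct computation shows the resulting exponents of $|F|$ and $|H|$ are in fact independent of the parameters $\theta_1,\theta_2$ --- produces
\[
|\Lambda^{\n}_{\P}(f,g,\mathbf{h})|\lesssim (\log^{+}(\n))^{r_0/r}\,|F|^{1/r_0}|G|^{1/r}|H|^{1-1/r-1/r_0},
\]
which is exactly the target inequality at $p=r_0$, with a loss polylogarithmic in $|\n|$ that survives the final summation over $\n$.

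To span the remaining range $p\in(r_0,p_0]$ with $\frac{1}{p_0} = \frac{1}{100 r_0}+\frac{99}{100 r}$ one would produce a companion restricted-type estimate at $p_0$ by a genuinely different column-level manipulation --- for instance by invoking the pointwise bound $|\pi_R(f,g)|\lesssim\mathscr{C} f\cdot\mathscr{C} g$ and redistributing the H\"older exponents in the sum over small tiles $\rho$ inside each $\n$-column so as to transfer weight from $g$ to $f$ --- and then real-interpolate between the two endpoints, both using the same $H'$. The main obstacle I anticipate lies exactly here: the $|F|$-exponent coming out of the generic estimate is rigidly pinned at $1/r_0$ regardless of the parameters $\theta_1,\theta_2$, so manufacturing a second endpoint estimate with a \emph{different} $|F|$-exponent while remaining compatible with the exceptional set built for $p=r_0$ is the delicate point of the argument; the explicit fractions $\tfrac{1}{100}$, $\tfrac{99}{100}$ in the statement simply record the concrete loss incurred by this interpolation.
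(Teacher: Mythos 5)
Your construction of the exceptional set and the resulting $p=r_0$ endpoint estimate are correct, and you rightly notice that with $E$ built from $M\big((\mathscr{C}f)^{r_0}\big)$ the parameter $\theta_1$ in the generic estimate does \emph{not} move the $|F|$-exponent off $1/r_0$, so your scheme stops at a single point of the claimed range. The idea you are missing is that the paper does not produce a second endpoint at all: it makes the single exceptional set do the work by thresholding a \emph{much higher} power of the Carleson operator, namely $\mathcal{M}^{\m}\big((\mathscr{C}f)^{100 r_0}\big)\lesssim |F|/|H|$. This only gives the weak size bound $\sizen_f(\P_{\mathrm{small}})\lesssim (|F|/|H|)^{1/(100 r_0)}$, which is precisely what restores the freedom in $\theta_1$: plugging $S_f\sim(|F|/|H|)^{1/(100 r_0)}$ and $E_f\sim |F|^{1/r_0}$ into Proposition \ref{proposition_generic_estimate} yields $|F|^{1/r_0 - (99/100)\sigma\theta_1/r_0}$, which sweeps the entire interval $\big[\tfrac{1}{100 r_0}+\tfrac{99}{100 r},\ \tfrac{1}{r_0}\big]$ as $\theta_1$ ranges over $[0,1]$, with the same $H'$ throughout. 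So the constants $1/100$, $99/100$ do not record an interpolation loss; they are an artefact of the arbitrary (but fixed) choice of the exponent $100 r_0$. No companion endpoint, no real interpolation inside this proposition, and in particular no need to ``redistribute weight from $g$ to $f$'' at the column level.

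There is a second, independent gap. Your assertion that ``only tiles with $I^{\n}_P\not\subset E$ contribute non-trivially'' is false: the size $\sizen_{\mathbf{h}}$ is built out of $\int |h_{R(P)}|^{r'}\Phi_{I^{\n}_P}$, and $\Phi_{I^{\n}_P}$ is rapidly decaying but not compactly supported, so it picks up the tails of $\mathbf{h}$ supported on $H'\subset E^c$ even when $I^{\n}_P$ sits deep inside $E$. The paper handles this with the decomposition $\P_{\mathrm{large}}=\bigsqcup_d\P_d$ according to $\mathrm{dist}(I^{\n}_P,E^c)\sim 2^d|I_P|$: on $\P_d$ the size bounds for $f$ and $g$ degrade by powers of $2^d$, but $\sizen_{\mathbf{h}}(\P_d)\lesssim_M 2^{-dM}$ by the rapid decay, and the gain wins. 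This piece of the argument is not optional. Finally, your exceptional set uses the plain Hardy--Littlewood maximal function rather than the shifted maximal functions $\mathcal{M}^{\m}$ with the $\langle\m\rangle^{-2}$ summation weights; because the reduction allows an arbitrarily large $N$, a polynomial loss in $\n$ would in principle be absorbable, but then the $\P_d$ analysis has to track a second family of shifts and the bookkeeping becomes considerably more delicate, which is why the paper builds the shifts into $E$ from the start.
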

Observe that the range of exponents is restricted to a subset of the line $q=r$.
\begin{remark}\label{remark:partial_result_statement}
The rather clumsy appearance of the above statement is due to some technicalities. Some comments are in order.\\ Firstly, there is the fact that in our argument the major subset $H'$ will end up depending on $f,g$ rather than on the sets $F,G$ only - the latter being the standard in typical time-frequency analysis arguments. This is because we will control the contributions of $f$ and $g$ by non-local non-positive averages (see Lemma \ref{lemma_f_control} and Lemma \ref{lemma_g_control}) and thus we will not be allowed to replace $f$ and $g$ by $\mathds{1}_F, \mathds{1}_G$ in the definition of the exceptional set that we are going to remove from $H$ (see proof below). However, this will not be a problem for the interpolation argument in the quasi-Banach range (that is, when $s < 1$), as will be clear from Lemma \ref{lemma_interpolation} and its proof.\\
Secondly, the range of boundedness in the statement looks unnatural and arbitrary. This is due to the fact that, in order to perform interpolation between estimates for different values of $r$, we need the major subset to be independent of $p,s$. If we gave up this requirement, the methods employed in the proof would yield the more natural range $2 < p < r$, but then we would not be able to ensure that the choice of major subset is indeed independent of $p$. In view of a more transparent proof, we have preferred the above formulation.
\end{remark}
 Similarly, we have
\begin{proposition}[Rectangles of low eccentricity]\label{proposition_partial_result_r>2_ecc_small}
Let $r>r_0 > 2$ and let $\mathscr{R}_{\mathrm{low}}$ be a collection of disjoint rectangles such that for every $R \in \mathscr{R}_{\mathrm{low}}$ it holds $\ec{R} < 1$.
Let $q$ be such that 
\[ \frac{1}{100}\frac{1}{r_0} + \frac{99}{100}\frac{1}{r} \leq \frac{1}{q} \leq \frac{1}{r_0} \]
and $s$ be such that 
\[ \frac{1}{r} + \frac{1}{q} = \frac{1}{s}. \]
Then, for any $F,G,H$ finite measurable subsets of $\R$ and for any functions $f,g$ such that 
\[ |f|\leq \mathds{1}_F, \quad |g|\leq \mathds{1}_{G}, \]
there exists a major subset $H' \subset H$ such that for any measurable vector-valued function $\mathbf{h}= (h_R)_{R\in\mathscr{R}_{\mathrm{low}}}$ that satisfies
\[ \Big(\sum_{R\in\mathscr{R}_{\mathrm{low}}}|h_R|^{r'}\Big)^{1/{r'}} \leq \mathds{1}_{H'} \]
it holds that 
\[ |\Lambda^r_{\mathscr{R}_{\mathrm{low}}}(f,g,\mathbf{h})| \lesssim_{q,r} |F|^{1/r} |G|^{1/q}|H|^{1/{s'}}. \]
The choice of major subset $H'$ may depend on $r,r_0,f,g,H$ but not on $q,s$.
\end{proposition}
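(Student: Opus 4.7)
The plan is to obtain Proposition \ref{proposition_partial_result_r>2_ecc_small} by repeating the proof of Proposition \ref{proposition_partial_result_r>2_ecc_large} verbatim after interchanging the roles of the two factors $f$ and $g$ (equivalently, of $R_1$ and $R_2$). For $R \in \mathscr{R}_{\mathrm{low}}$ one has $|R_1| \geq |R_2|$, so now $|R_1|^{-1}$ is the smaller time-scale and $|R_2|^{-1}$ the larger one. Accordingly, I would redo the discretization of Section \S\ref{section_reductions} using the change of variables $x = |R_1|^{-1} y$, so that $\pi_{R_1(P)}f$ becomes the factor morally constant on the small-tile intervals $I_\rho$ of scale $|R_1|^{-1}$, while $\pi_{R_2(P)}g$ is morally constant only at the coarser super-tile scale $|R_2|^{-1}$. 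The definition $R_3 := 2(-R_1-R_2)$ still satisfies $|R_3|\sim\max(|R_1|,|R_2|) = |R_1|$, so the bump $\chi_{R_3}$ still interacts correctly with the new (smaller) scale.

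With these roles swapped, small tiles become pairs $(\omega_1\times I_\rho, \omega_3\times I_\rho)$ with $\omega_1 = R_1$ and $|\omega_1||I_\rho|=1$, while super tiles become $(R_2\times I, \SS^{\n}_{R,I})$ with $|R_2||I|=1$; the order $\prec_{\n}$ and the notion of $\n$-column are defined with $R_1$ replaced by $R_2$. The sizes $\sizen_g, \energyn_g$ are then built from $\big(\fint_{I_P}|\pi_{R_2(P)}g|^{r_0}\big)^{1/r_0}$ in place of the $f$-analogues, while those for $\mathbf{h}$ remain unchanged. The analogues of Lemmas \ref{lemma_column_structure}--\ref{lemma_energy_f} go through word for word; in particular the mirror of Lemma \ref{lemma_g_control} gives pointwise control of $\frac{1}{|I_{\top{\C}}|}\sum_{P\in\C}\int_{I^\n_P}|\pi_{R_1(P)}f|^r$ by $\fint_{I^\n_{\top{\C}}}(\VarC{r}f)^r$, using the fact that the shifted frequency rectangles $R_1(P)\times I^\n_P$ are disjoint for $P$ in a single $\n$-column. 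Rerunning Sections \S\ref{section_decomposition_lemmas}--\S\ref{section_generic_estimate} then yields the mirror generic estimate
\[ |\Lambda^\n_{\P}(f,g,\mathbf{h})| \lesssim \sup_{P\in\P}\Big[\frac{1}{|I^\n_P|}\int_{I^\n_P}(\VarC{r}f)^r\Big]^{1/r} S_g^{\sigma\theta_1}E_g^{1-\sigma\theta_1}S_{\mathbf{h}}^{r'\sigma\theta_2/r_0}E_{\mathbf{h}}^{1-r'\sigma\theta_2/r_0}. \]

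To produce the major subset $H'$ I would mimic the exceptional-set argument of Proposition \ref{proposition_partial_result_r>2_ecc_large}: let $\Omega$ be the union of the super-level sets of the appropriate maximal and Carleson-type operators applied to $f$ and $g$ (the Hardy--Littlewood maximal function, $\mathscr{C}f$, $\mathscr{C}g$, $\VarC{r_0}g$, $\VarC{r}f$), with thresholds calibrated by the densities $|F|/|H|$ and $|G|/|H|$ and large enough that $|\Omega|\leq |H|/2$ by $L^p$-boundedness of these operators together with the hypotheses $|f|\leq\mathds{1}_F$, $|g|\leq\mathds{1}_G$; then set $H' := H\setminus\Omega$. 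Crucially, $\Omega$ depends on $f,g,r,r_0,H$ but not on $q$ or $s$. For $\mathbf{h}$ supported in $H'$, restricting attention to super tiles whose time interval meets $H'$ allows Lemmas \ref{lemma_f_control}--\ref{lemma_energy_f} (in their mirrored forms) to yield power-of-density bounds for $\sizen_g, \energyn_g, \sizen_{\mathbf{h}}, \energyn_{\mathbf{h}}$, while the variational Carleson factor is controlled by $(|F|/|H|)^{1/r}$. Inserting these into the generic estimate with $\theta_1,\theta_2$ chosen so that the exponents of $|F|,|G|,|H|$ match $1/r, 1/q, 1/s'$ closes the bound for each fixed $\n$; summing in $\n$ against the polynomially-decaying $(1+|\n|)^{-N}$ coming from the discretization absorbs the $\log^{+}(\n)$ loss of Lemma \ref{lemma_energy_f}.

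The principal obstacle is the one already flagged in Remark \ref{remark:partial_result_statement}: in order for $H'$ to be independent of $(q,s)$, the interpolation exponents $\theta_1,\theta_2$ appearing in the generic estimate must be uniformly bounded for all admissible $q$ in the stated range, which is why the condition is stated as $\frac{1}{q}\in\big[\frac{1}{100r_0}+\frac{99}{100r},\frac{1}{r_0}\big]$ rather than the natural $2<q<r$. Beyond this bookkeeping, the argument is a literal transcription of the high-eccentricity proof under the $(f,R_1)\leftrightarrow (g,R_2)$ symmetry, and no genuinely new ingredient is required.
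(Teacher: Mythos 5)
Your proposal is correct and matches the paper's own treatment: the paper (in Remark~\ref{remark_on_trilinear_form_eccentricity_ll_1}) simply asserts that Proposition~\ref{proposition_partial_result_r>2_ecc_small} follows from the high-eccentricity argument by interchanging the roles of $f$ and $g$ (equivalently $R_1$ and $R_2$) and using $\n$-rows in place of $\n$-columns, which is precisely the symmetry you carry out in detail. Your spelling-out of the mirrored discretization, tiles, sizes, energies, decomposition lemmas and generic estimate is faithful to that intent, so no new ingredient is needed and nothing is missing.
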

The range of exponents this time is restricted to a subset of the line $p=r$ instead. The same remarks as in \ref{remark:partial_result_statement} apply. \\
Although the ranges in the two above propositions don't intersect, upon interpolation with the $r=\infty$ case they will each yield the same range of estimates, allowing us to sum their two contributions. See next section for details.\\
We now proceed with the proof. We will prove the first of the two propositions above and then comment in Remark \ref{remark_on_trilinear_form_eccentricity_ll_1} on the modifications one has to make to the argument in order to adapt it to the second one. 
\begin{proof}[Proof of Prop. \ref{proposition_partial_result_r>2_ecc_large}]
%
%
The proof follows a standard argument originating from \cite{MuscaluTaoThiele} (although implicitly present in previous work), together with ideas from \cite{Benea_thesis,BeneaMuscalu}, in particular the idea of using non-local operators for the stopping-time argument below. \\
It will suffice to prove, for data as given in the statement, that for any collection of super tiles $\P$ it is 
\begin{equation}\label{eqn:restricted_weak_type_estimate}
 \Lambda^{\n}_{\P}(f,g,\mathbf{h}) \lesssim_{p,q,r} [\log^{+}(\n)]^{O(1)} |F|^{1/p} |G|^{1/r} |H|^{1/{s'}} 
 \end{equation}
for $\frac{1}{100 r_0} + \frac{99}{100 r} \leq \frac{1}{p} \leq \frac{1}{r_0}$ and $1/p + 1/r + 1/{s'} = 1$. Indeed, since the constant is only poly-logarithmic in the shifting parameter $\n$ and since by Section \S\ref{section_reductions} the trilinear form $\Lambda^r_{\mathscr{R}_{\mathrm{high}}}$ is dominated by 
\[ \sum_{\n \in \Z} (1 + |\n|)^{-N} \Lambda^{\n}_{\P}(f,g,\mathbf{h}) \]
for an arbitrarily large $N>0$, the above implies the same bounds for $\Lambda^r_{\mathscr{R}_{\mathrm{high}}}$ itself.\\
%
%
Given sets $F,G,H$ and functions $f,g$ such that 
\[ |f| \leq \mathds{1}_F, \quad |g| \leq \mathds{1}_G, \]
we set out to define an exceptional set $E$ (independent of $p,s$ but depending on $f,g$) which will be removed from $H$. It will be useful to introduce the following maximal operators: for any $\m \in \Z$, define $\mathcal{M}^{\m}$ to be the shifted maximal function
\[ \mathcal{M}^{\m}f(x):= \sup_{\substack{I \text{ dyadic}, \\ x \in I }} \frac{1}{|I|} \int_{I^{\m}} |f(y)| \ud y. \]
It is well known that the shifted maximal function $\mathcal{M}^{\m}$ is $L^p \to L^p$ bounded for $1 < p \leq \infty$ and also $L^1 \to L^{1,\infty}$ bounded, with constant at most $O(\log^{+}(\m))$ (see for example \S 4.3.4 of \cite{MuscaluSchlag}). Now we can define the exceptional set $E$ to be  
\begin{align*}
E:=  \bigcup_{\m \in \Z} & \Big\{x \in \R \text{ s.t. } \mathcal{M}^{-\n + \m}((\mathscr{C}f)^{100 r_0})(x) \gtrsim \log^{+}(\m - \n) \langle \m \rangle^2 \frac{|F|}{|H|} \Big\} \\
& \cup \bigcup_{\m' \in \Z} \Big\{x \in \R \text{ s.t. } \mathcal{M}^{\m'}((\VarC{r}g)^r)(x) \gtrsim \log^{+}(\m') \langle \m' \rangle^2 \frac{|G|}{|H|}\Big\}
\end{align*}
with implicit constants to be chosen below, where $\langle \m \rangle := (1 + |\m|^2)^{1/2}$. 
%
%
Define then the set $H':= H \backslash E$; we claim that $H'$ is a major subset of $H$. Indeed, for a given $\m \in \Z$ we see by the $L^1 \to L^{1,\infty}$ boundedness of $\mathcal{M}^{-\n + \m}$ that 
\begin{align*}
\Big|\Big\{x \in \R \text{ s.t. } & \mathcal{M}^{-\n + \m}((\mathscr{C}f)^{100 r_0})(x) \gtrsim \log^{+}(\m - \n) \langle \m \rangle^2 \frac{|F|}{|H|} \Big\}\Big| \\
&  \lesssim \log^{+}(\m - \n) \frac{\int |\mathscr{C}f|^{100 r_0} \ud x}{\log^{+}(\m - \n) \langle \m \rangle^2 \frac{|F|}{|H|}} \\
& \lesssim \langle \m \rangle^{-2}|H| \frac{\int |f|^{100 r_0}}{|F|} \leq 
 \langle \m \rangle^{-2} |H|, 
\end{align*}
where in the last line we have used the fact that the Carleson operator $\mathscr{C}$ is $L^{100 r_0} \to L^{100 r_0}$ bounded. Since this is summable in $\m$, we see that for a suitable choice of implicit constants the contribution to $|E|$ of these sets is at most $\ll |H|$. Similarly, for a given $\m' \in \Z$ we see by the $L^1 \to L^{1,\infty}$ boundedness of $\mathcal{M}^{\m'}$ and by the $L^r \to L^r$ boundedness of $\VarC{r}g$ that 
\[ \Big|\Big\{x \in \R \text{ s.t. } \mathcal{M}^{\m'}((\VarC{r}g)^r)(x) \gtrsim \log^{+}(\m') \langle \m' \rangle^2 \frac{|G|}{|H|} \Big\}\Big| \ll \langle \m' \rangle^{-2} |H|, \] 
which is again summable in $\m'$ and thus we conclude that $|E| \ll |H|$, which proves the claim. Notice $H'$ depends on $f,g,r,r_0$ and $H$ but not on $p$.\\
%
%
Now, we partition the collection $\P$ into
\begin{align*}
\P_{\mathrm{small}} :=& \{P \in \P \text{ s.t. } I^{\n}_P \not\subset E\}, \\
\P_{\mathrm{large}}:=& \P \backslash \P_{\mathrm{small}},
\end{align*}
and will estimate the trilinear forms $\Lambda^{\n}_{\P_{\mathrm{small}}}$ and $\Lambda^{\n}_{\P_{\mathrm{large}}}$ separately. Notice the presence of the shifting parameter $\n$ in the definitions.\\
%
%
%
We start with $\P_{\mathrm{small}}$. Given $P \in \P_{\mathrm{small}}$ we observe that since $I^{\n}_P \not\subset E$ there exists an $x \in I^{\n}_P$ such that $\mathcal{M}^{-\n}((\mathscr{C}f)^{100 r_0})(x) \lesssim \log^{+}(\n)|F| / |H|$, and thus a fortiori it must be that 
\[ \frac{1}{|I_P|} \int_{(I^{\n}_P)^{-\n}} |\mathscr{C}f|^{100 r_0} \ud x \lesssim \log^{+}(\n)\frac{|F|}{|H|}; \]
but $(I^{\n}_P)^{-\n} = I_P$, and therefore we must have by Lemma \ref{lemma_f_control} that
\begin{equation}\label{eqn:estimate_size_f}
\sizen_f(\P_{\mathrm{small}}) \lesssim \log^{+}(\n)\Big(\frac{|F|}{|H|}\Big)^{1/ {100 r_0}} 
\end{equation}
(we are not keeping track of the optimal powers of $\log^{+}(\n)$ because they will be inconsequential to us).
Similarly, we see that for a given $P \in \P_{\mathrm{small}}$ there must be an $x \in I^{\n}_P$ such that $\mathcal{M}^{0}((\VarC{r}g)^r)(x) \lesssim |G| / |H|$, and therefore for all such $P$ we have
\begin{equation}\label{eqn:estimate_g}
\frac{1}{|I_P|} \int_{I^{\n}_P}(\VarC{r}g)^r \ud x \lesssim \frac{|G|}{|H|}.
\end{equation}
Moreover, by Lemma \ref{lemma_h_control} and the hypothesis that $\|\mathbf{h}\|_{\ell^{r'}} \leq \mathds{1}_{H'}$ we have the trivial bound 
\begin{equation}\label{eqn:estimate_size_h}
\sizen_{\mathbf{h}}(\P) \lesssim 1 
\end{equation}
for any arbitrary collection of super tiles $\P$.\\
%
%
Recall that $\sigma = (r - r_0)/r$. Combining \eqref{eqn:estimate_size_f}, \eqref{eqn:estimate_g} and \eqref{eqn:estimate_size_h} with the generic estimate of Proposition \ref{proposition_generic_estimate} and the energy estimates of Lemmas \ref{lemma_energy_f}, \ref{lemma_energy_h}, we obtain after a little algebra that for any $0 \leq \theta_1, \theta_2 \leq 1$, $\theta_1 + \theta_2 = 1$, 
\begin{equation}\label{eqn:restricted_weak_type_estimate_2}
\begin{aligned}
|\Lambda^{\n}_{\P_{\mathrm{small}}}(f,g,\mathbf{h})|  \lesssim & \log^{+}(\n)^{O(1)} \Big(\frac{|G|}{|H|}  \Big)^{1/r}  \Big(\frac{|F|}{|H|}\Big)^{\sigma \theta_1 / 100 r_0} \\
& \qquad \qquad \times (|F|^{1/r_0})^{1 - \sigma\theta_1} \cdot 1 \cdot |H|^{1/{r'} - \sigma\theta_2/r_0 } \\
& =  \log^{+}(\n)^{O(1)} |F|^{1/r_0 - (99/100)\sigma\theta_1/r_0 } |G|^{1/r}\\
& \qquad \qquad \times |H|^{1/{r'} - 1/r - \sigma\theta_2/r_0 - \sigma \theta_1 / 100 r_0}.
\end{aligned}
\end{equation}
Since 
\[ \frac{1}{r_0} - \frac{99}{100} \frac{\sigma}{r_0} = \frac{1}{100 r_0} + \frac{99}{100 r_0}, \]
this yields precisely the desired estimates \eqref{eqn:restricted_weak_type_estimate} as $\theta_1$ ranges over $[0,1]$.\\
%
%
Now we are left with showing that \eqref{eqn:restricted_weak_type_estimate} holds for $\Lambda^{\n}_{\P_{\mathrm{large}}}$ as well. In order to do so, we decompose $\P_{\mathrm{large}}$ into $\bigsqcup_{d \in \N} \P_{d}$ where
%
%
\[ \P_d := \Big\{P \in \P_{\mathrm{large}} \text{ s.t. } 2^{d}\leq 1 + \frac{\mathrm{dist}(I^{\n}_P, E^c)}{|I_P|} < 2^{d+1} \Big\}; \]
it then suffices to prove that the contribution of $\Lambda^{\n}_{\P_d}$ is summable in $d$ and that the sum is bounded by the right hand side of \eqref{eqn:restricted_weak_type_estimate_2}. \\
%
%
Let then $d$ be fixed and observe that if $P \in \P_d$ then $2^{d + 1} I^{\n}_P \not\subset E$ and $2^{d} I^{\n}_P \subset E$. This means that for some integer $\mathfrak{l}$ such that $|\mathfrak{l}| \sim 2^d$ the shifted interval $(I^{\n}_P)^{\mathfrak{l}} = I^{\n + \mathfrak{l}}_P$ is not contained in $E$. This implies, by definition of $E$, that for some $x \in I^{\n + \mathfrak{l}}_P$ we have $\mathcal{M}^{-\n -\mathfrak{l}}(\mathscr{C}f^{100 r_0})(x) \lesssim \log^{+}(\n + \mathfrak{l}) \langle \mathfrak{l} \rangle^2 {|F|/|H|}$, which in turn implies as before 
\begin{equation}\label{eqn:estimate_size_f_2}
\sizen_f(\P_d) \lesssim (\sup_{|\mathfrak{l}|\sim 2^d}\log^{+}(\n + \mathfrak{l})) 2^{2d / 100 r_0} \Big(\frac{|F|}{|H|}\Big)^{100 r_0}.
\end{equation}
Similarly, given $P \in \P_d$ we see that there must be an integer $\mathfrak{l}$ such that $|\mathfrak{l}| \sim 2^d$ and the shifted interval $(I^{\n}_P)^{\mathfrak{l}} = I^{\n + \mathfrak{l}}_P$ contains a point $x$ such that $\mathcal{M}^{-\mathfrak{l}}((\VarC{r}g)^r)(x) \lesssim \log^{+}(\mathfrak{l}) \langle \mathfrak{l} \rangle^2 {|G|/|H|}$; this in turn implies the estimate
\begin{equation}\label{eqn:estimate_g_2}
\sup_{P \in \P_d} \frac{1}{|I_P|} \int_{I^{\n}_P} (\VarC{r}g)^r \ud x \lesssim d 2^{2d} \frac{|G|}{|H|}.
\end{equation}
Now, estimates \eqref{eqn:estimate_size_f_2}, \eqref{eqn:estimate_g_2} are worse than the corresponding ones \eqref{eqn:estimate_size_f}, \eqref{eqn:estimate_g}; however, we now have an improved estimate for $\mathbf{h}$. Indeed, if $P \in \P_d$, we must have by definition of $H'$ that $\mathrm{dist}(I^{\n}_P, H') \gtrsim 2^d |I_P|$; then, by the rapid decay of the functions $\Phi_{I^{\n}_P}$, for any large $M>0$ we can bound (again by Lemma \ref{lemma_h_control}) 
\begin{equation}\label{eqn:estimate_size_h_2}
\sizen_{\mathbf{h}}(\P_d) \lesssim_M 2^{-d M}.
\end{equation}
Combining estimates \eqref{eqn:estimate_size_f_2}, \eqref{eqn:estimate_g_2} and \eqref{eqn:estimate_size_h_2} with Proposition \ref{proposition_generic_estimate} and the energy Lemmas \ref{lemma_energy_f}, \ref{lemma_energy_h} as done before, we obtain for any $0 \leq \theta_1, \theta_2 \leq 1$, $\theta_1 + \theta_2 = 1$, that the estimate
\begin{align*}
 |\Lambda^{\n}_{\P_d}(f,g,\mathbf{h})| \lesssim (\sup_{|\mathfrak{l}|\sim 2^d}\log^{+}(\n + \mathfrak{l}))^{O(1)} &  d 2^{-d M'}  |F|^{1/r_0 - (99/100)\sigma\theta_1/r_0 } \\
 & \times |G|^{1/r} |H|^{1/{r'} - 1/r - \sigma\theta_2/r_0 - \sigma \theta_1 / 100 r_0} 
\end{align*}
holds, where $M' > M/2$, provided $M$ was chosen sufficiently large. This is summable in $d > 0$, and since 
\[ \sum_{d > 0} \sum_{\mathfrak{l} \; : \; |\mathfrak{l}|\sim 2^d} \log^{+}(\n + \mathfrak{l})^{O(1)}  d 2^{-d M'} \lesssim (\log^{+}(\n))^{O(1)}  \]
we see that $|\Lambda^{\n}_{\P_{\mathrm{large}}}(f,g,\mathbf{h})|$ is controlled by the right hand side of \eqref{eqn:restricted_weak_type_estimate_2} as desired, thus concluding the proof.
\end{proof}
%
%
\begin{remark}\label{remark_on_trilinear_form_eccentricity_ll_1}
We conclude the section with a comment on how Proposition \ref{proposition_partial_result_r>2_ecc_small} is proven. In the previous sections we have concentrated on the collection $\mathscr{R}_{\mathrm{high}}$ of rectangles with large eccentricity (i.e. $|R_2| > |R_1|$). However, it is clear that the analysis is completely symmetric in the opposite case of $\mathscr{R}_{\mathrm{low}}$, in which the r\^{o}les of functions $f$ and $g$ are simply swapped. In particular, one has to re-define super tiles reversing $f$ and $g$ and use $\n$-rows instead of $\n$-columns (adapting the definitions is a trivial exercise); then one performs a global decomposition of the collection of super tiles into uniformly controlled $\n$-rows. All lemmas and propositions from previous sections then hold with $f$ replaced by $g$ and viceversa, without substantial changes.
\end{remark}
%
%
%
%
\section{Proof of the main theorem}\label{section:proof_of_main_theorem}
In this section we will finally complete the proof of Theorem \ref{main_theorem_2}. This will be achieved by interpolating the results of Section \S \ref{section:partial_results} for generic $r>2$ with the trivial case of $r=\infty$, as in the authors' related work \cite{BernicotVitturi}.\par
First of all, we state (and prove) the $r=\infty$ case of Theorem \ref{main_theorem_2}. 
%
%
\begin{theorem}\label{theorem_r_infty}
Let $\mathscr{R}$ be a collection of disjoint rectangles. Then for all $1 < p, q < \infty$ and 
\[ \frac{1}{p} + \frac{1}{q} = \frac{1}{s} \]
it holds that for any $f \in L^p, g \in L^q$
\[ \|S^{\infty}_{\mathscr{R}}(f,g)\|_{L^s} \lesssim_{p,q} \|f\|_{L^p} \|g\|_{L^q}.  \]
\end{theorem}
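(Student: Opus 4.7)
The plan is to reduce the theorem to the pointwise estimate
\[ S^\infty_{\mathscr{R}}(f,g)(x) \lesssim Mf(x) \cdot Mg(x), \]
uniformly in the collection $\mathscr{R}$, where $M$ denotes the Hardy--Littlewood maximal operator. Once this is established, the conclusion is immediate from H\"older's inequality with $1/p + 1/q = 1/s$ combined with the $L^p$-boundedness of $M$ for $1 < p < \infty$.

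To prove the pointwise bound I would rewrite each smooth bilinear projection as a double convolution. For a fixed rectangle $R = R_1 \times R_2$, Fubini gives
\[ \iint \widehat{f}(\xi) \widehat{g}(\eta) \chi_R(\xi,\eta) e^{2\pi i (\xi + \eta) x} \ud \xi \ud \eta = \iint f(y) g(z)\, K_R(x-y, x-z)\, \ud y \ud z, \]
where $K_R = (\chi_R)^{\vee}$ satisfies
\[ K_R(u,v) = |R_1||R_2|\, e^{2\pi i (c(R_1) u + c(R_2) v)}\, \widecheck{\chi}(|R_1| u,\, |R_2| v). \]
Because $\chi$ is a fixed smooth, compactly supported function, all of its iterated partial derivatives in the two variables separately are $L^\infty$-bounded, which yields the product-type decay
\[ |\widecheck{\chi}(a,b)| \lesssim_N (1 + |a|)^{-N}(1 + |b|)^{-N} \quad \text{for every } N \geq 1. \]
After rescaling,
\[ |K_R(u,v)| \lesssim_N |R_1|(1 + |R_1 u|)^{-N} \cdot |R_2|(1 + |R_2 v|)^{-N}, \]
so the kernel factorizes into two one-dimensional approximate identities at scales $|R_1|^{-1}$ and $|R_2|^{-1}$. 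Substituting back and using the standard fact that convolution with such kernels is pointwise dominated by $M$, we conclude that each summand of $S^\infty_{\mathscr{R}}(f,g)(x)$ is controlled by $Mf(x)\cdot Mg(x)$, independently of $R$, and taking the supremum in $R$ gives the desired pointwise bound.

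No genuine obstacle arises in this argument: the entire proof is driven by the single observation that the smoothness of $\chi$ causes its inverse Fourier transform to split into a product of one-dimensional Schwartz-like factors, and this decouples the two simultaneous scales $|R_1|^{-1}$, $|R_2|^{-1}$ at the kernel level. This is precisely the feature that fails in the non-smooth setting considered in Theorem \ref{main_theorem_2}: the kernels $\widecheck{\mathds{1}}_{R_1}, \widecheck{\mathds{1}}_{R_2}$ have no such decay, individual projections can only be controlled by the Carleson operator rather than by $M$, and the sup bound by $\mathscr{C}f \cdot \mathscr{C}g$ that survives does not suffice to reach finite $r$ without the time--frequency machinery developed in the preceding sections.
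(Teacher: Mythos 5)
Your argument is correct for the literal $S^\infty_{\mathscr{R}}$ with smooth cutoffs $\chi_R$, and the stronger pointwise bound $S^\infty_{\mathscr{R}}(f,g)\lesssim Mf\cdot Mg$ via the Schwartz decay of $\widecheck{\chi}$ is valid. However, the paper's own proof opens with the identity $S^\infty_{\mathscr{R}}(f,g)(x)=\sup_{R}|\pi_{R_1}f(x)\cdot\pi_{R_2}g(x)|$, which is the \emph{non-smooth} operator $T^\infty_{\mathscr{R}}$ from the introduction (the symbol $S^\infty$ in the statement is a notational slip), and Remark~\ref{remark_ell_1_trilinear_form} together with the interpolation argument of Section~\ref{section:proof_of_main_theorem} use this theorem against the non-smooth trilinear form $\Lambda^\infty_{\mathscr{R}}$. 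For $T^\infty_{\mathscr{R}}$ your kernel-factorization argument fails: each $\widecheck{\mathds{1}}_{R_j}$ is a modulated sinc kernel with no integrable decay, so the convolutions cannot be dominated by $M$. The pointwise bound that survives is $T^\infty_{\mathscr{R}}(f,g)\leq \mathscr{C}f\cdot\mathscr{C}g$, and the theorem then follows from H\"older's inequality and the Carleson--Hunt theorem rather than from the Hardy--Littlewood maximal function. You correctly flag exactly this distinction in your closing paragraph, but you treat the Carleson route as external to the statement; in fact it \emph{is} the version of the theorem that the downstream interpolation requires, so your $M$-based argument cannot be substituted for the paper's.
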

\begin{proof}
Notice that 
\[ S^{\infty}_{\mathscr{R}}(f,g)(x) = \sup_{R \in \mathscr{R}} |\pi_{R_1}f(x) \cdot \pi_{R_2}g(x)| \]
is dominated pointwise by $\mathscr{C}f \cdot \mathscr{C}g$. The result is then a trivial consequence of H\"{o}lder's inequality and the Carleson-Hunt theorem.
\end{proof}
\begin{remark}\label{remark_ell_1_trilinear_form}
We record here for future reference that Theorem \ref{theorem_r_infty} implies the $L^p \times L^r \times L^{s'}(\ell^1) \to \R$ boundedness of the trilinear form $\Lambda^{\infty}_{\mathscr{R}}(f,g,\mathbf{h})$ as given in \eqref{eqn:trilinear_form}.
\end{remark}
In order to combine this trivial result with Propositions \ref{proposition_partial_result_r>2_ecc_large}, \ref{proposition_partial_result_r>2_ecc_small}, we will need a multilinear interpolation argument for vector-valued operators. This argument is originally due to Silva, who introduced it in \cite{Silva}, and is based upon complex interpolation. We introduce the following definition that extends the usual notion of generalized restricted weak type inequalities to the vector-valued setting we need.
%
%
\begin{definition}\label{definition_generalized_restricted_weak_type}
Let $\Lambda$ be a trilinear form, let ${\boldsymbol\alpha}= (\alpha_1,\alpha_2,\alpha_3)$ be such that $0 \leq \alpha_1, \alpha_2 \leq 1$, $\alpha_3\leq 1$, $\alpha_1 + \alpha_2 + \alpha_3 = 1$ and let $t\geq 1$. We say that $\Lambda$ is of \emph{generalized restricted weak type} $({\boldsymbol\alpha};t)$ if for every measurable subsets $F,G,H$ of $\R$ of finite measure and for every functions $f,g$ such that 
\[ |f|\leq \mathds{1}_F, \quad |g|\leq \mathds{1}_G, \]
there exists a major subset $H'$ of $H$ such that for any vector-valued measurable function $\mathbf{h} = (h_k)_k$ that satisfies
\[ \Big(\sum_{k}|h_k|^t\Big)^{1/t} \leq \mathds{1}_{H'}, \] 
the inequality 
\[ |\Lambda(f,g,\mathbf{h})| \lesssim |F|^{\alpha_1}|G|^{\alpha_2}|H|^{\alpha_3}\]
holds true.
\end{definition}
\begin{remark}
The difference between Definition \ref{definition_generalized_restricted_weak_type} and the classical definition of generalized restricted weak type (see for example Chapter 3 in \cite{Thiele_book}) is two-fold. Firstly, they differ in the presence of the additional parameter $t$ which specifies the space where the vector-valued function $\mathbf{h}$ takes values; secondly, the major subset $H'$ is here allowed to depend on $f,g$ instead of $F,G$ only. 
\end{remark}
%
%
\begin{remark}\label{remark:preliminary_result_using_generalized_restricted_weak_type}
Using Definition \ref{definition_generalized_restricted_weak_type} we can rephrase Proposition \ref{proposition_partial_result_r>2_ecc_large} as stating that, under the same hypotheses, the trilinear form $\Lambda^r_{\mathscr{R}_{\mathrm{high}}}$ is of generalized restricted weak type
\[ \Big(\Big(\frac{1}{p}, \frac{1}{r}, \frac{1}{s'}\Big); r'\Big)  \]
for all $p,s$ such that $\frac{1}{100 r_0} + \frac{99}{100 r} \leq \frac{1}{p} \leq \frac{1}{r_0}$ and $\frac{1}{p} + \frac{1}{r} + \frac{1}{s'} = 1$.\\
Similarly, Proposition \ref{proposition_partial_result_r>2_ecc_small} states that the trilinear form $\Lambda^r_{\mathscr{R}_{\mathrm{low}}}$ is of generalized restricted weak type
\[ \Big(\Big(\frac{1}{r}, \frac{1}{q}, \frac{1}{s'}\Big); r'\Big)  \]
for all $q$ such that $\frac{1}{100 r_0} + \frac{99}{100 r} \leq \frac{1}{q} \leq \frac{1}{r_0}$ and $\frac{1}{r} + \frac{1}{q} + \frac{1}{s'} = 1$.\\
Finally, Theorem \ref{theorem_r_infty} above implies that $\Lambda^\infty_{\mathscr{R}}$ is of generalized restricted weak type 
\[ \Big(\Big(\frac{1}{p}, \frac{1}{q}, \frac{1}{s'}\Big); 1\Big)  \]
for all $1 < p,q < \infty$ and $\frac{1}{p} + \frac{1}{q} + \frac{1}{s'} = 1$.
\end{remark}
The interpolation argument can now be stated as follows.
%
%
\begin{lemma}[\cite{Silva}]\label{lemma_interpolation}
Let $\Lambda$ be a trilinear form of generalized restricted weak type $({\boldsymbol\alpha};t_0)$ and $({\boldsymbol\beta};t_1)$, with the property that the major subset is the same for $({\boldsymbol\alpha};t_0)$ and $({\boldsymbol\beta};t_1)$. Then for all $\theta$ such that $0 < \theta < 1$, with ${\boldsymbol\alpha}^{\theta}$ given by
\[ \alpha^{\theta}_j = (1-\theta)\alpha_j + \theta \beta_j, \qquad j=1,2,3  \]
and $t_\theta$ given by
\[ \frac{1}{t_\theta} = \frac{1-\theta}{t_0} + \frac{\theta}{t_1}, \]
it holds that $\Lambda$ is of generalized restricted weak type $({\boldsymbol\alpha}^{\theta}; t_\theta)$.
\end{lemma}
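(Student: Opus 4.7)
The strategy is standard multilinear complex interpolation adapted to the vector-valued setting, as originally carried out by Silva. Fix measurable sets $F, G, H$ of finite measure and measurable functions $f, g$ with $|f| \leq \mathds{1}_F$, $|g| \leq \mathds{1}_G$. By the common-major-subset hypothesis, there exists a single $H' \subset H$ with $|H'| > |H|/2$ which simultaneously serves as the major subset for the $(\boldsymbol{\alpha}; t_0)$ and $(\boldsymbol{\beta}; t_1)$ weak types, with these specific $f, g$. We claim this same $H'$ witnesses the $(\boldsymbol{\alpha}^{\theta}; t_\theta)$ weak type; so take any $\mathbf{h} = (h_k)_k$ with $\|\mathbf{h}\|_{\ell^{t_\theta}} \leq \mathds{1}_{H'}$, and we must bound $|\Lambda(f,g,\mathbf{h})| \lesssim |F|^{\alpha_1^\theta}|G|^{\alpha_2^\theta}|H|^{\alpha_3^\theta}$.

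The key construction is the analytic family of vector-valued functions
\[ \mathbf{h}_z := \Big( \tfrac{h_k}{|h_k|} \, |h_k|^{p(z)} \Big)_k, \qquad p(z) := t_\theta\Big(\tfrac{1-z}{t_0} + \tfrac{z}{t_1}\Big), \]
so that $p(\theta) = 1$ and $\mathrm{Re}\,p(z) = t_\theta/t_j$ for $\mathrm{Re}(z) = j$, $j \in \{0,1\}$. A direct computation then gives $\|\mathbf{h}_z\|_{\ell^{t_j}} = \|\mathbf{h}\|_{\ell^{t_\theta}}^{t_\theta/t_j} \leq \mathds{1}_{H'}$ on the line $\mathrm{Re}(z)=j$. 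By a standard reduction (density of functions taking finitely many values, writing the sign as a formal phase), the family $\mathbf{h}_z$ is well-defined and genuinely analytic. The scalar inputs $f, g$ do not need to be deformed: they already satisfy $|f|\leq \mathds{1}_F$, $|g|\leq \mathds{1}_G$ as required at both endpoints.

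Define $F(z) := \Lambda(f, g, \mathbf{h}_z)$. By the multilinearity of $\Lambda$ and the analyticity of $z \mapsto \mathbf{h}_z$, $F$ is analytic in the open strip $0 < \mathrm{Re}(z) < 1$, continuous on its closure, and of admissible (sub-exponential) growth when $\mathbf{h}$ is first reduced to the simple-function case. On $\mathrm{Re}(z) = 0$, invoking the $(\boldsymbol{\alpha}; t_0)$ weak type with the (unchanged) major subset $H'$ yields $|F(z)| \leq C_0 \, |F|^{\alpha_1}|G|^{\alpha_2}|H|^{\alpha_3}$; on $\mathrm{Re}(z) = 1$, the $(\boldsymbol{\beta}; t_1)$ weak type (with the same $H'$) gives $|F(z)| \leq C_1 \, |F|^{\beta_1}|G|^{\beta_2}|H|^{\beta_3}$. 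Applying Hadamard's three lines lemma at $z=\theta$ then yields
\[ |\Lambda(f,g,\mathbf{h})| = |F(\theta)| \leq C_0^{1-\theta} C_1^{\theta} \, |F|^{\alpha_1^\theta}|G|^{\alpha_2^\theta}|H|^{\alpha_3^\theta}, \]
which is exactly the required inequality.

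The main conceptual obstacle—and the reason for the unusual hypothesis in the lemma—is the fact that in the notion of generalized restricted weak type used here the major subset $H'$ may depend on $f, g$ (not only on $F, G$). A naive interpolation would produce different major subsets at the two endpoints, and the three lines lemma could not be applied to a single $F(z)$. The hypothesis of a common $H'$ removes this obstruction, and makes the argument go through verbatim. All other steps—verifying analyticity and growth of $F(z)$, checking the boundary norm bounds on $\mathbf{h}_z$, and invoking the three lines lemma—are routine; the reduction to simple $\mathbf{h}$ and passage to the limit also follow the usual complex-interpolation blueprint.
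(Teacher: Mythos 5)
Your proof is correct and follows essentially the same complex-interpolation argument as the paper's own sketch (deform only $\mathbf{h}$ via an analytic power of $|h_k|$, check the boundary $\ell^{t_j}$-bounds, and apply the three-lines lemma with the single shared major subset $H'$). The only differences are cosmetic: you include the phase factor $h_k/|h_k|$ so that $\mathbf{h}_\theta = \mathbf{h}$ exactly (the paper implicitly reduces to $h_k\geq 0$), and you make the role of the common-$H'$ hypothesis more explicit; both are harmless refinements of the same route.
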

One can also bound the constant of the interpolated inequality explicitely in terms of the data, but we will not be interested in doing so here.
\begin{proof}
The lemma is a particular case of a more general interpolation lemma of Silva (specifically Lemma 4.3 in \cite{Silva}). We sketch the proof here for the reader's convenience.\\
We argue by complex interpolation and assume $t_0, t_1 < \infty$. Let $F,G,H,f,g,H',\theta$ be given and let $\mathbf{h}$ be such that 
\[ \Big(\sum_k |h_k|^{t_\theta}\Big)^{1/{t_\theta}} \leq \mathds{1}_{H'}.\] 
For $z\in \mathbb{C}$ with $\mathrm{Re} z \in [0,1]$ define $\mathbf{h}^z$ by 
\[ h_k^z(x) := |h_k(x)|^{t(z)} \]
for every $k$, where 
\[ t(z) := (1-z)\frac{t_\theta}{r_0} + z \frac{t_\theta}{r_1}. \]
When $\mathrm{Re} z = 0$ we have $|h_k^z|^{t_0} = |h_k|^{t_\theta}$, and when $\mathrm{Re} z = 1$ we have $|h_k^z|^{t_1} = |h_k|^{t_\theta}$; hence by assumption we have for $\mathrm{Re} z = 0$
\[ |\Lambda(f,g,\mathbf{h}^z)| \lesssim |F|^{\alpha_1}|G|^{\alpha_2}|H|^{\alpha_3}, \]
and for $\mathrm{Re} z = 1$ we have
\[ |\Lambda(f,g,\mathbf{h}^z)| \lesssim |F|^{\beta_1}|G|^{\beta_2}|H|^{\beta_3}. \]
Since the function $\Psi(z) := \Lambda(f,g,\mathbf{h}^z)$ is easily seen to be holomorphic in the open strip $S = \{z \in \mathbb{C} \text{ s.t. } 0 < \mathrm{Re} z < 1 \}$, continuous in its closure and bounded, we can apply to it Hadamard's three-lines-lemma and conclude that since $\mathbf{h}^{\theta + i0} = \mathbf{h}$ we have
\[ |\Lambda(f,g,\mathbf{h})| \lesssim |F|^{\alpha^{\theta}_1}|G|^{\alpha^{\theta}_2}|H|^{\alpha^{\theta}_3}, \]
as desired.
\end{proof}
We are now ready to prove Theorem \ref{main_theorem_2}. It will be a straightforward consequence of the interpolation Lemma \ref{lemma_interpolation}, but some extra care is needed to make sure all hypotheses are verified. The resulting proof is elementary but technical and painful to read; in order to remedy this fact, we have included in Figure \ref{figure_interpolation} a picture that illustrates the proof geometrically.

\begin{figure}[ht]
\centering
\definecolor{zzttqq}{rgb}{0.6,0.2,0}
\definecolor{ffxfqq}{rgb}{1,0.5,0}
\definecolor{sqsqsq}{rgb}{0.125,0.125,0.125}
\definecolor{uuuuuu}{rgb}{0.267,0.267,0.267}
\begin{tikzpicture}[line cap=round,line join=round,>=triangle 45,x=1cm,y=1cm]
\clip(-0.57,-0.6) rectangle (16.0,9);
%
%
%
\fill[line width=2pt,color=ffxfqq,fill=ffxfqq,fill opacity=0.18] (3.114202852374399,1.311243306262905) -- (5.13077037267722,3.3278108265657265) -- (5.130770372677221,6.688756693737095) -- (3.114202852374399,4.672189173434274) -- cycle;
\fill[line width=2pt,color=zzttqq,fill=zzttqq,fill opacity=0.35] (3.114202852374399,3.9996337876048416) -- (4.485117202280964,5.370548137511407) -- (4.485117202280964,2.816668733335356) -- (3.114202852374399,1.4457543834287914) -- cycle;
\fill[line width=2pt,color=zzttqq,fill=zzttqq,fill opacity=0.10] (5.734878676629043,2.4146857585806494) -- (6.923864357693555,3.603671439645162) -- (6.923864357693555,5.585314241419351) -- (5.734878676629043,4.396328560354837) -- cycle;
\fill[line width=2pt,color=zzttqq,pattern=north west lines,pattern color=zzttqq,fill opacity=0.40] (0,5) -- (3,8) -- (3,3) -- (0,0) -- cycle;
%
%
%
\draw [line width=1pt] (0,0)-- (3,3);
\draw [line width=1pt] (8,0)-- (0,0);
\draw [line width=1pt] (8,5)-- (0,5);
\draw [line width=1pt] (8,0)-- (8,5);
\draw [line width=1pt] (11,3)-- (3,3);
\draw [line width=1pt] (8,0)-- (11,3);
\draw [line width=1pt] (0,5)-- (0,0);
\draw [line width=1pt] (3,3)-- (3,8);
\draw [line width=1pt] (3,8)-- (0,5);
\draw [line width=1pt] (11,3)-- (11,8);
\draw [line width=1pt] (11,8)-- (8,5);
\draw [line width=1pt] (11,8)-- (3,8);
\draw [line width=0.8pt,dotted] (1.5,4)-- (9.5,4);
\draw [line width=0.8pt,dotted] (9.5,4)-- (0,0);
\draw [line width=0.8pt,dotted] (9.5,4)-- (3,3);
\draw [line width=0.8pt,dotted] (9.5,4)-- (3,8);
\draw [line width=0.8pt,dotted] (9.5,4)-- (0,5);
\draw [line width=0.8pt,dash pattern=on 2pt off 2pt] (5.734878676629043,2.6623911088024226)-- (0,0);
\draw [line width=0.8pt,dash pattern=on 2pt off 2pt] (5.734878676629043,3.15780180924597)-- (0,5);
\draw [line width=0.8pt,dash pattern=on 2pt off 2pt] (5.734878676629043,3.15780180924597)-- (3,8);
\draw [line width=0.8pt,dash pattern=on 2pt off 2pt] (5.734878676629043,2.6623911088024226)-- (3,3);
\draw [line width=2pt] (5.734878676629043,3.15780180924597)-- (5.734878676629043,2.6623911088024226);
\draw [line width=2pt] (6.180748307028235,2.8605553889798414)-- (5.883501886762106,2.5633089687137134);
\draw [line width=0.6pt,dotted] (2.62248661252581,0)-- (5.622486612525809,3);
\draw [line width=0.6pt,dotted] (4.829371517161299,0)-- (7.829371517161299,3);
\draw [line width=0.6pt,dotted] (5.13077037267722,3.3278108265657265)-- (5.130770372677221,2.5082837601514103);
\draw [line width=0.6pt,dotted] (6.923864357693555,3.603671439645162)-- (6.923864357693556,2.094492840532256);
\draw [line width=0.6pt,dotted] (3.114202852374399,1.311243306262905)-- (3.114202852374399,0.4917162398485895);
\draw [line width=0.6pt,dotted] (5.734878676629043,0.9055071594677438)-- (5.73487867662904,2.4146857585806485);
\draw [line width=0.6pt,dotted] (5.734878676629043,0.9055071594677438)-- (8.905507159467744,0.9055071594677438);
\draw [line width=0.6pt,dotted] (3.114202852374399,0.4917162398485895)-- (8.491716239848589,0.4917162398485895);
\draw [line width=0.6pt,dotted] (9.5,4)-- (9.5,1.5);
\draw [line width=0.6pt,dotted] (9.5,4)-- (8,2.5);
%
\draw [->,line width=0.8pt] (11.6,4.4) -- (9.5,4);
\draw [->,line width=0.8pt] (11.5,1.5) -- (5.734878676629043,2.92);
%
%
%
\draw (11.5,1.6) node[text width=3cm,anchor=north west] {partial range of $\Lambda^{r_1}_{\mathscr{R}_{\mathrm{low}}} \text{ for } p=r_1$};
\draw (8,2.9) node[anchor=north west] {$1/2$};
\draw (9.4,1.6) node[anchor=north west] {$1/2$};
\draw (8.8,0.95) node[anchor=north west] {$1/r_1$};
\draw (8.4,0.55) node[anchor=north west] {$1/r_2$};
\draw (7.5,-0.2) node[anchor=north west] {$r=2$};
\draw (2.6,-0.2) node[anchor=north west] {$r_2$};
\draw (-0.4,-0.2) node[anchor=north west] {$r=\infty$};
\draw (4.8,-0.2) node[anchor=north west] {$r_1$};
\draw (11.8,4.8) node[anchor=north west] {$L^2\times L^2 \to L^1$};
\draw (10.1,1.2) node[anchor=north west] {$1/p$};
\draw (11,6) node[anchor=north west] {$1/q$};
\draw (8,0.1) node[anchor=north west] {$0$};
\draw (10.8,2.8) node[anchor=north west] {$1$};
\draw (11,3.4) node[anchor=north west] {$0$};
\draw (11,8.4) node[anchor=north west] {$1$};
\draw (6.7,8.1) node[anchor=south west] {$r$};
%
%
%
\begin{scriptsize}
\draw [fill=uuuuuu] (1.5,4) circle (2pt);
\draw [fill=sqsqsq] (4.829371517161299,0) circle (2.5pt);
\draw [fill=sqsqsq] (2.62248661252581,0) circle (2.5pt);
\draw [color=uuuuuu] (9.5,4) circle (2pt);
\draw [fill=uuuuuu] (4.12248661252581,4) circle (2pt);
\draw [fill=uuuuuu] (6.329371517161299,4) circle (2pt);
\end{scriptsize}
\end{tikzpicture}
\caption{\footnotesize The interpolation argument in geometric form. The horizontal coordinate represents the exponent $r$, the other coordinates represent the (inverse) exponents $1/p$-$1/q$. A point at coordinates $(r,1/p,1/q)$ represents the $L^p \times L^q \times L^{s'}(\ell^{r'}) \to \R$ estimate for the trilinear form $\Lambda^{r}_{\mathscr{R}}$, where $1/s = 1/p +  1/q$. In particular, the horizontal line through through the axis of the parallelepiped represents (dualized) $L^2\times L^2 \to L^1$ estimates. At $r=\infty$ the range of boundedness for $\Lambda^{\infty}_{\mathscr{R}}$ is the whole $[0,1]\times[0,1]$ (hatched) square in the $1/p$-$1/q$ plane, as given by Theorem \ref{theorem_r_infty}. At exponent $r_1$ intermediate between $2$ and $\infty$, Proposition \ref{proposition_partial_result_r>2_ecc_large} gives for $\Lambda^{r_1}_{\mathscr{R}_{\mathrm{low}}}$ the range of boundedness represented by the thickened interval above, as per label. The other thickened interval represents instead the range of boundedness for $\Lambda^{r_1}_{\mathscr{R}_{\mathrm{high}}}$ given by Proposition \ref{proposition_partial_result_r>2_ecc_small}. Interpolation between exponents $r_1$ and $\infty$ can be interpreted geometrically in the above picture as taking the convex hull of the corresponding endpoint ranges. This gives then for an intermediate exponent $r_2$ a range of boundedness (for $\Lambda^{r_2}_{\mathscr{R}_{\mathrm{low}}}$) represented by the darker shaded area at $r=r_2$, a subset of the $r_2' < p, q < r_2$ range represented by the lighter shaded area. As $r_1$ tends to $2$, the darker shaded area fills the lighter one. Interpolation with the other thickened segment fills the same area in the limit.}
\label{figure_interpolation}
\end{figure}
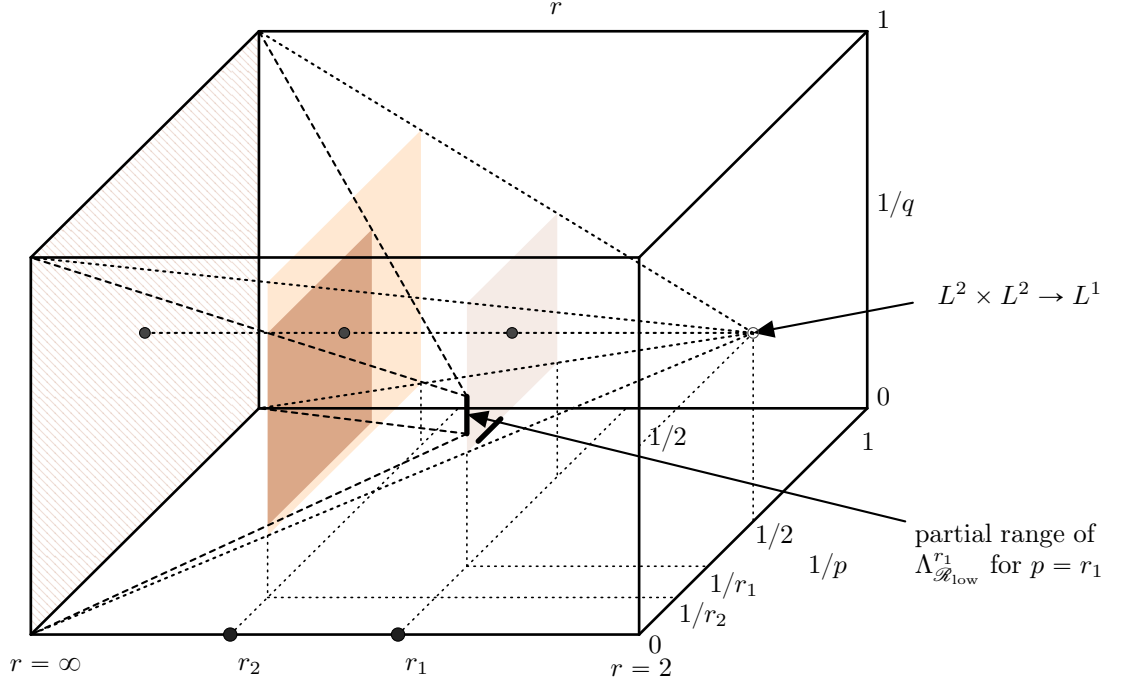
\begin{proof}[Proof of Th. \ref{main_theorem_2}]
First of all, as noticed in Remark \ref{remark:preliminary_result_using_generalized_restricted_weak_type}, the trilinear form $\Lambda^{\infty}_{\mathscr{R}}$ is of generalized restricted weak type $\Big(\Big(\frac{1}{p_0}, \frac{1}{q_0}, \frac{1}{s_0'}\Big); 1 \Big)$ for $1 < p_0,q_0 < \infty$, $1/{p_0} + 1/{q_0} + 1/{s_0'} = 1$ and for any collection $\mathscr{R}$. Secondly, set $r_0 = r_0(r) := (r+2)/2$ in the statement of Proposition \ref{proposition_partial_result_r>2_ecc_large} in order to remove the dependence on $r_0$; write $t_1$ in place of $r'$, so that $r_0(r) = (3t_1 - 2)/(2t_1 - 2)$. Then we have that the trilinear form $\Lambda^r_{\mathscr{R}_{\mathrm{high}}}$ is of generalized restricted weak type $\Big(\Big(\frac{1}{p_1}, \frac{1}{t_1'}, \frac{1}{s_1'}\Big); t_1 \Big)$ for all $1<t_1<2$ and all $p_1, s_1$ such that 
\begin{equation}\label{eqn:exponents_conditions} 
\frac{t_1 - 1}{50(3t_1 - 2)} + \frac{99}{100}\frac{t_1 - 1}{t_1} \leq  \frac{1}{p_1} \leq \frac{2 t_1 - 2}{3 t_1 - 2}  
\end{equation}
and $1/{p_1} + 1/{t_1'} + 1/{s_1'} = 1$. Fix then some other $r>2$. We claim that for any $p,q$ such that $r' < p,q < r$ we can choose $p_0, q_0, p_1, t_1, \theta$ so that 
\begin{equation}\label{eqn:system_for_interpolation}
\begin{aligned}
\frac{1}{p} & = \frac{1 - \theta}{p_0} + \frac{\theta}{p_1}, \\
\frac{1}{q} & = \frac{1 - \theta}{q_0} + \frac{\theta}{{t_1}'}, \\
\frac{1}{r'} & = \frac{1 - \theta}{1} + \frac{\theta}{t_1}, 
\end{aligned}
\end{equation} 
where $1 < p_0, q_0 < \infty$, $t_1$ and $p_1$ satisfy condition \eqref{eqn:exponents_conditions}, and $0 < \theta < 1$. By the above discussion, a direct application of Lemma \ref{lemma_interpolation} will then show that $\Lambda^r_{\mathscr{R}_{\mathrm{high}}}$ is of generalized restricted weak type $\Big(\Big(\frac{1}{p}, \frac{1}{q}, \frac{1}{s'}\Big); r' \Big)$. Notice that the major subset can be taken to be the one given by the value $t_1$, since for $t_0 = 1$ the major subset is the set itself.\\
That we can find such quantities as above follows from the limiting case obtained by taking $t_1 = p_1 = 2$, that is assuming we have the estimate $\big(\big(\frac{1}{2},\frac{1}{2},0\big); 1\big)$ (notice that for such a choice of parameters we don't know if the trilinear form is of generalized restricted weak type, but at the moment we are only concerned with showing the existence of solutions to system \eqref{eqn:system_for_interpolation}). Indeed, in this case it's easy to solve the above system of equations and inequalities: one has $\frac{\theta}{2} = \frac{1}{r}$ and therefore $\frac{1}{p_0} = \Big(\frac{1}{p} - \frac{1}{r}\Big)\Big(1 - \frac{2}{r}\Big)^{-1}$ and $\frac{1}{q_0} = \Big(\frac{1}{q} - \frac{1}{r}\Big)\Big(1 - \frac{2}{r}\Big)^{-1}$. In the general case of $t_1 < 2$ (for which we know the trilinear form is of generalized restricted weak type), by choosing $t_1$ sufficiently close to $2$ we can solve the system by a perturbation of the solution for the limiting case (by continuity), because the range where $\Lambda^r_{\mathscr{R}_{\mathrm{high}}}$ is of generalized restricted weak type according to Proposition \ref{proposition_partial_result_r>2_ecc_large} is arbitrarily close to the $\big(\big(\frac{1}{2},\frac{1}{2},0\big); 1\big)$ estimate.\\
Finally, observe that if $\Lambda^r_{\mathscr{R}_{\mathrm{high}}}$ is of generalized restricted weak type $\Big(\Big(\frac{1}{p}, \frac{1}{q},\frac{1}{s'}\Big); r'\Big)$ then $\widetilde{\Lambda}^r_{\mathscr{R}_{\mathrm{high}}}$ (as given by \eqref{eqn:trilinear_form_1}) is of generalized restricted weak type $\Big(\frac{1}{p}, \frac{1}{q},\frac{1}{s'}\Big)$ \emph{in the classical sense}, and therefore we can conclude the strong $L^p \times L^q \times L^{s'} \to \R$ boundedness of $\widetilde{\Lambda}^r_{\mathscr{R}_{\mathrm{high}}}$ by classical multilinear interpolation. This is equivalent to the $L^p \times L^q \times L^{s'}(\ell^{r'}) \to \R$ boundedness of the trilinear form $\Lambda^r_{\mathscr{R}_{\mathrm{high}}}$, of course.\\
\par
The argument for $\Lambda^r_{\mathscr{R}_{\mathrm{low}}}$ is essentially identical, and one can easily verify that it yields the same range of estimates. Thus we can conclude estimate \eqref{eqn:main_estimate} by summing up the separate contributions given by collections $\mathscr{R}_{\mathrm{high}}$, $\mathscr{R}_{\mathrm{low}}$.
\end{proof} 
\appendix
\section{}\label{appendix_A}
In this appendix we show how to prove Corollary \ref{corollary_smooth_rectangles}. What we want to show in particular is the boundedness of the trilinear form 
\begin{equation}\label{eqn:smooth_trilinear_form}
\Xi^r_{\chi,\mathscr{R}}(f,g,\mathbf{h}):= \sum_{R \in \mathscr{R}} \int\limits_{\R} \iint\limits_{\widehat{\R^2}} \widehat{f}(\xi) \widehat{g}(\eta) \chi_R(\xi, \eta) e^{2\pi i (\xi + \eta) x} h_R(x) \ud \xi \ud \eta \ud x,
\end{equation}
where $\chi_R$ is the smooth compactly supported bump function $\chi$ rescaled to fit the rectangle $R$. The boundedness will also be uniform in $\chi$ if $\chi$ is taken in a bounded subset\footnote{In particular, we ask that $\chi$ has support inside $B(2)$ and that $\|\partial^{\alpha}_{\xi}\partial^{\beta}_{\eta} \chi\|_{L^\infty} \leq C_{\alpha,\beta}$ for some absolute constants $C_{\alpha,\beta}>0$ and all indices $\alpha,\beta$.} of $C_c^{\infty}(\R^2)$. We will perform a series of reductions and then show how the corollary is an immediate consequence of the following variant of Theorem \ref{main_theorem_2}, where dyadic rectangles are replaced by well-separated ones. First let us make the notion of well-separated rectangles rigorous.
\begin{definition}
Let $\mathscr{R}$ be a collection of rectangles. We say that $\mathscr{R}$ is \emph{$K$-separated}, with $K \geq 1$, if for any $R, R' \in \mathscr{R}$ such that $R \neq R'$ we have $KR \cap K R' = \emptyset$.
\end{definition}
With this definition we can state the variant mentioned before.
\begin{theorem*}[Variant of Theorem \ref{main_theorem_2}]
Let $r>2$ and let $\mathscr{R}$ be a $3$-separated collection of \emph{rectangles} in $\widehat{\R^2}$. Let $p,q,s$ be such that 
\[ \frac{1}{p} + \frac{1}{q} = \frac{1}{s} \]
and such that 
\[ r' < p,q < r, \qquad r'/2 < s < r/2. \]
Then for every $f \in L^p$, $g \in L^q$ and $\mathbf{h} \in L^{s'}(\ell^{r'})$ we have
\begin{equation}
\Lambda^r_{\mathscr{R}}(f,g,\mathbf{h}) \lesssim_{r,p,q} \|f\|_{L^p} \|g\|_{L^q} \|\mathbf{h}\|_{L^{s'}(\ell^{r'})}.
\end{equation}
\end{theorem*}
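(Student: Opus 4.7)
The plan is to reduce the variant to Theorem~\ref{main_theorem_2} via the device of shifted dyadic grids, and then to carry out essentially the same time-frequency analysis on the resulting skeleton.

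First I would invoke the classical fact that there exist three dyadic grids $\mathcal{D}^{\alpha}$, $\alpha \in \{0, 1/3, 2/3\}$, with the property that for every interval $I \subset \R$ one can find $\alpha$ and $J \in \mathcal{D}^\alpha$ with $I \subset J \subset 3I$. Applied coordinate-wise this assigns to each $R \in \mathscr{R}$ an enclosing rectangle $\tilde R = \tilde R_1 \times \tilde R_2$ with $\tilde R_j \in \mathcal{D}^{\alpha_j(R)}$, $R \subset \tilde R \subset 3R$, and $|\tilde R_j| \sim |R_j|$. Pigeonholing on the nine possible pairs of shifts splits $\mathscr{R}$ into nine subcollections on each of which the shifts are constant; since every shifted dyadic grid enjoys the same nesting and partition properties as the standard one, we may reason as though $\tilde{\mathscr{R}} := \{\tilde R : R \in \mathscr{R}\}$ lies in $\mathcal{D} \times \mathcal{D}$. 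The $3$-separation hypothesis is precisely what is needed to ensure $\tilde{\mathscr{R}}$ is \emph{disjoint}: $\tilde R \cap \tilde R' \subset 3R \cap 3R' = \emptyset$.

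The core of the argument is then to rerun the proof of Theorem~\ref{main_theorem_2} using the dyadic skeleton $\tilde{\mathscr{R}}$ to organise the time-frequency tiles, while keeping the original non-dyadic projections $\pi_{R_j}$ in all pointwise estimates. Concretely, a small tile takes the form $(\tilde R_2 \times I_\rho,\, \tilde R_3 \times I_\rho)$ with $|\tilde R_2||I_\rho|=1$; a super tile takes the form $(\tilde R_1 \times I,\, \SS^{\n}_{R, I})$ with $|\tilde R_1||I|=1$; and sizes and energies are defined by the same formulas as in Sections~\ref{section_sizes}--\ref{section_energies}, but with $\pi_{R_1}f$, $\pi_{R_2}g$ and $h_R$ as integrands rather than $\pi_{\tilde R_1}f$, $\pi_{\tilde R_2}g$. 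Because $R_j \subset \tilde R_j$ with $|R_j| \sim |\tilde R_j|$, the uncertainty principle underlying Sections~\ref{section_reductions}--\ref{section_energies}, namely that $|\pi_{R_j}f|$ is morally constant at scale $|\tilde R_j|^{-1}$, and the pointwise domination by the Carleson and variational Carleson operators (Lemmas~\ref{lemma_f_control}--\ref{lemma_energy_f}) transfer verbatim; the structural Lemma~\ref{lemma_column_structure} on pairwise disjointness of the $R_2$'s along a column follows directly from the assumed disjointness of $\mathscr{R}$. Consequently the decomposition lemmas of Section~\ref{section_decomposition_lemmas}, the generic estimate of Section~\ref{section_generic_estimate}, the partial results of Section~\ref{section:partial_results} and the final interpolation of Section~\ref{section:proof_of_main_theorem} all go through unchanged, yielding the claimed bound.

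The main obstacle is purely one of bookkeeping: at every step of the original proof where strict dyadicity of $R_j$ itself is invoked, one must check that the comparable-scale dyadic $\tilde R_j$ suffices. This is routine, because the dyadic structure enters only through scale arithmetic and the stopping-time arguments, both of which depend only on $|R_j| \sim |\tilde R_j|$ and on disjointness of $\mathscr{R}$, not on $R_j$ itself being dyadic.
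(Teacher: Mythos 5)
Your argument is essentially identical to the paper's: both cover each $R$ by a dyadic rectangle $\widetilde R$ from a shifted grid with $R\subset\widetilde R\subset 3R$, observe that $3$-separation forces the $\widetilde R$ to remain pairwise disjoint, split into finitely many shifted grids, and then rerun the time-frequency machinery of Theorem~\ref{main_theorem_2} using the dyadic skeleton $\widetilde{\mathscr{R}}$ to organise tiles while keeping the original non-dyadic projections $\pi_{R_j}$ in the analytic estimates. The paper leaves the transference details to the reader; your write-up fills in the same checks (comparable scales, domination by Carleson and variational Carleson operators, disjointness of the $R_2$'s along a column) that the paper takes for granted.
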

\begin{proof}
We need only observe that if the rectangles are $3$-separated, then for each rectangle $R \in \mathscr{R}$ there exists a rectangle $\widetilde{R}$ with $R \subset \widetilde{R} \subset 3R$ that belongs to one of the dyadic (shifted) grids $\mathcal{D} \times \mathcal{D}'$, where $\mathcal{D},\mathcal{D}'$ are either of the collections\footnote{Notice that $\mathcal{D}_0$ is the standard dyadic grid and $\mathcal{D}_1,\mathcal{D}_2$ also satisfy the combinatorial property $I, I' \in \mathcal{D}_j$ and $I \cap I' \neq \emptyset$ $\Rightarrow$ $I \subseteq I'$ or $I' \subseteq I$.}
\begin{align*}
\mathcal{D}_0 & := \{ [2^k \ell, 2^k(\ell + 1)] \text{ : } k, \ell \in \Z\},  \\
\mathcal{D}_1 & := \{ \big[2^k \big(\ell + \frac{(-1)^k}{3}\big), 2^k\big(\ell + 1 + \frac{(-1)^k}{3}\big)\big] \text{ : } k, \ell \in \Z\},  \\
\mathcal{D}_2 & := \{ \big[2^k \big(\ell - \frac{(-1)^k}{3}\big), 2^k\big(\ell + 1 - \frac{(-1)^k}{3}\big)\big] \text{ : } k, \ell \in \Z\}.
\end{align*}
Then one repeats the proof of Theorem \ref{main_theorem_2} for each such product of grids using rectangles $\widetilde{R}$. The details are left to the reader.
\end{proof}
Now we claim that Corollary \ref{corollary_smooth_rectangles} follows for arbitrary disjoint rectangles if it holds for, say, $12$-separated collections of rectangles. Indeed, let $\mathcal{W}$ denote a Whitney decomposition of $[-1/2,1/2]$ chosen so that $I \in \mathcal{W} \Rightarrow 16 I \subset [-1/2,1/2]$. Thus we have a decomposition of the square $[-1/2,1/2]^2$ given by the rectangles in $\mathcal{W}\times\mathcal{W}$, where elements can be indexed with $(j,k) \in \N^2$ in such a way that the element corresponding to $(j,k)$ has horizontal side of length $\lesssim \delta^j$ and vertical side of length $\lesssim \delta^k$ for some absolute constant $0<\delta < 1$. We consider then a smooth partition of unity $\{\phi_{j,k}\}$ associated to the decomposition $\mathcal{W}\times\mathcal{W}$, that is $\mathds{1}_{[-1/2,1/2]^2} = \sum_{j,k}\phi_{j,k}$, which we can take so that each $\phi_{j,k}(\xi,\eta)$ factorizes as a tensor product $\phi_j \otimes \phi_k$ of functions of one variable and so that each $\phi_{j,k}$ is supported in the $4/3$-enlargement of the rectangle of $\mathcal{W}\times\mathcal{W}$ indexed by $(j,k)$.\\
For a given rectangle $R$, we denote by $\Psi_R$ the affine map that sends $R$ to $[-1/2,1/2]^2$ preserving orientation (thus $\chi_R = \chi \circ \Psi_R$); if the rectangle is the $4/3$-enlargement of the one in $\mathcal{W}\times\mathcal{W}$ indexed by $(j,k)$ we write $\Psi_{jk}$ for this map. We decompose $\chi$ using the partition of unity above: we let $c_{jk}:= \|\chi \phi_{jk}\|_{\infty}$ and $\chi_{jk}:= c_{jk}^{-1} (\chi\cdot \phi_{jk})\circ \Psi_{jk}^{-1}$, so that 
\[ \chi = \sum_{j,k} c_{jk} \chi_{jk} \circ \Psi_{jk}\]
and each $\chi_{jk}$ is $L^\infty$ normalized. It's easy to see that $c_{jk} \lesssim \min(\gamma^j, \gamma^k)$ for some $0<\gamma<1$. Moreover, each $\chi_{jk}$ is adapted to $[-1/2,1/2]^2$ , in the sense that $\chi_{jk}$ is supported in $[-1/2,1/2]^2$ and $\|\partial^{\alpha}_{\xi}\partial^{\beta}_{\eta} \chi_{jk}\|_{L^\infty} \lesssim_{\alpha,\beta} 1$.\\
Using this decomposition of $\chi$ we decompose the trilinear form $\Xi^r_{\chi,\mathscr{R}}$ as follows. Notice that $(\chi\cdot \phi_{jk})\circ \Psi_{R} = c_{jk} \chi_{jk} \circ (\Psi_{jk}\circ\Psi_{R})$, so if we define the rectangle $R_{jk}$ to be $R_{jk} = (\Psi_{jk}\circ\Psi_{R})^{-1}([-1/2,1/2]^2)$ we can see that $\{3/4 R_{jk}\}_{jk}$ is a Whitney decomposition of $R$ which is an affine image of $\mathcal{W}\times\mathcal{W}$. We define the collections $\mathscr{R}_{jk}$ to be $\{R_{jk} \text{ s.t. } R \in \mathscr{R}\}$. Therefore we can write
\[ \Xi^r_{\chi,\mathscr{R}} = \sum_{j,k} c_{jk} \Xi^r_{\chi_{jk},\mathscr{R}_{jk}}, \]
and the boundedness of $\Xi^r_{\chi,\mathscr{R}}$ follows from the (uniform in $j,k$) boundedness of the $\Xi^r_{\chi_{jk},\mathscr{R}_{jk}}$ thanks to the summability of $c_{jk}$. It's easy to verify from the construction above that the rectangles in $\mathscr{R}_{jk}$ are indeed $12$-separated, and hence it suffices to prove boundedness for $12$-separated collections.\\
Now, fix $\chi$ and take $\mathscr{R}$ to be $12$-separated. We can assume that $\chi(\xi,\eta) = \chi_1(\xi)\chi_2(\eta)$ because we can always reduce to this case by a windowed Fourier series expansion\footnote{This has the effect of reducing the separation constant slightly.} (the coefficients will be summable thanks to the fact that $\chi$ is smooth). Letting $\phi_j(x) := \chi_j'(x)$ for $j=1,2$ we can write 
\begin{align*}
\Xi^r_{\chi,\mathscr{R}}& (f,g,\mathbf{h})\\
 & = \sum_{R\in\mathscr{R}} \int\limits_{\R} \iint\limits_{\widehat{\R^2}}\int\limits^{\Psi_{R_2}(\eta)}_{-\infty}\int\limits^{\Psi_{R_1}(\xi)}_{-\infty} \widehat{f}(\xi) \widehat{g}(\eta) \mathds{1}_{R}(\xi,\eta) \phi_1(\zeta)\phi_2(\theta) e^{2\pi i (\xi + \eta)x} h_R(x) \ud\zeta \ud\theta \ud\xi \ud\eta \ud x.
\end{align*}
We want to use Fubini to take the integration in $\ud\zeta\ud\theta$ out, so for $\zeta,\theta$ fixed and $R \in \mathscr{R}$ we let $R_{\zeta,\theta} := R \cap ([\Psi_{R_1}^{-1}(\zeta),+\infty)\times[\Psi_{R_2}^{-1}(\theta),+\infty))$ and let $\mathscr{R}_{\zeta,\theta}:= \{R_{\zeta,\theta} \text{ s.t. } R \in \mathscr{R}\}$. We see that by Fubini we can bound 
\begin{align*}
|\Xi^r_{\chi,\mathscr{R}}& (f,g,\mathbf{h})| \\
& \leq \int\limits^{+\infty}_{-\infty}\int\limits^{+\infty}_{-\infty} |\phi_1(\zeta)||\phi_2(\theta)|\Big|\sum_{R\in\mathscr{R}} \int\limits_{\R} \iint\limits_{\widehat{\R^2}}\widehat{f}(\xi) \widehat{g}(\eta) \mathds{1}_{R_{\zeta,\theta}}(\xi,\eta)  e^{2\pi i (\xi + \eta)x} h_R(x) \ud\xi \ud\eta \ud x\Big| \ud\zeta \ud\theta \\
& = \int\limits^{+\infty}_{-\infty}\int\limits^{+\infty}_{-\infty} |\phi_1(\zeta)||\phi_2(\theta)| |\Lambda^r_{\mathscr{R}_{\zeta,\theta}}(f,g,\mathbf{h})| \ud\zeta \ud\theta.
\end{align*}
To conclude, notice that $\mathscr{R}_{\zeta,\theta}$ is certainly a $3$-separated collection of rectangles and therefore the variant of Theorem \ref{main_theorem_2} above applies, giving a bound independent of $\zeta,\theta$. Since $\phi_1,\phi_2 \in L^1$, integrating in $\ud\zeta\ud\theta$ concludes the argument.

\bibliography{bilinear_rubio_de_francia_bibliography}

\providecommand{\bysame}{\leavevmode\hbox to3em{\hrulefill}\thinspace}
\providecommand{\MR}{\relax\ifhmode\unskip\space\fi MR }
\providecommand{\MRhref}[2]{%
  \href{http://www.ams.org/mathscinet-getitem?mr=#1}{#2}
}
\providecommand{\href}[2]{#2}
\begin{thebibliography}{10}

\bibitem{Benea_thesis}
Cristina Benea, \emph{Vector-valued {E}xtensions for {S}ingular {B}ilinear
  {O}perators and {A}pplications}, Ph.D. thesis, Cornell University,
  https://ecommons.cornell.edu/handle/1813/40903, 2015.

\bibitem{BeneaBernicot}
Cristina Benea and Fr\'ed\'eric Bernicot, \emph{A bilinear {R}ubio de {F}rancia
  inequality for arbitrary squares}, Forum Math. Sigma \textbf{4} (2016), e26,
  34. \MR{3569060}

\bibitem{BeneaMuscalu_preprint}
Cristina Benea and Camil Muscalu, \emph{Rubio de {F}rancia theorems revisited:
  linear and bilinear case}, preprint.

\bibitem{BeneaMuscalu}
\bysame, \emph{Multiple vector-valued inequalities via the helicoidal method},
  Anal. PDE \textbf{9} (2016), no.~8, 1931--1988. \MR{3599522}

\bibitem{Bernicot}
Fr\'ed\'eric Bernicot, \emph{{$L^p$} estimates for non-smooth bilinear
  {L}ittlewood-{P}aley square functions on {$\Bbb R$}}, Math. Ann. \textbf{351}
  (2011), no.~1, 1--49. \MR{2824844}

\bibitem{BernicotShrivastava}
Fr\'ed\'eric Bernicot and Saurabh Shrivastava, \emph{Boundedness of smooth
  bilinear square functions and applications to some bilinear
  pseudo-differential operators}, Indiana Univ. Math. J. \textbf{60} (2011),
  no.~1, 233--268. \MR{2952417}

\bibitem{BernicotVitturi}
Fr\'{e}d\'{e}ric {Bernicot} and Marco {Vitturi}, \emph{{Bilinear Rubio de
  Francia inequalities for collections of non-smooth squares}}, Publicacions
  {M}atem\`{a}tiques, to appear.

\bibitem{Bourgain}
J.~Bourgain, \emph{On square funtions on the trigonometric system}, Bull. Soc.
  Math. Belg. S\'{e}r. B \textbf{37} (1985), no.~1, 20--26.

\bibitem{Carleson_LP}
Lennart Carleson, \emph{On the {L}ittlewood-{P}aley theorem}, Inst.
  Mittag-Leffler report (1967).

\bibitem{Cordoba}
Antonio C\'ordoba, \emph{Some remarks on the {L}ittlewood-{P}aley theory},
  Rend. Circ. Mat. Palermo (2) (1981), no.~suppl. 1, 75--80. \MR{639467}

\bibitem{TaoCowling}
Michael Cowling and Terence Tao, \emph{Some light on {L}ittlewood-{P}aley
  theory}, Math. Ann. \textbf{321} (2001), no.~4, 885--888. \MR{1872532}

\bibitem{Journe}
Jean-Lin Journ\'e, \emph{Calder\'on-{Z}ygmund operators on product spaces},
  Rev. Mat. Iberoamericana \textbf{1} (1985), no.~3, 55--91. \MR{836284}

\bibitem{Lacey_bilinearSqF}
Michael~T. Lacey, \emph{On bilinear {L}ittlewood-{P}aley square functions},
  Publ. Mat. \textbf{40} (1996), no.~2, 387--396. \MR{1425626}

\bibitem{Lacey_RdF}
\bysame, \emph{Issues related to {R}ubio de {F}rancia's {L}ittlewood-{P}aley
  inequality}, New York Journal of Mathematics. NYJM Monographs, vol.~2, State
  University of New York, University at Albany, Albany, NY, 2007. \MR{2293255}

\bibitem{MohantyShrivastava}
Parasar Mohanty and Saurabh Shrivastava, \emph{A note on the bilinear
  {L}ittlewood-{P}aley square function}, Proc. Amer. Math. Soc. \textbf{138}
  (2010), no.~6, 2095--2098. \MR{2596047}

\bibitem{MuscaluSchlag}
Camil Muscalu and Wilhelm Schlag, \emph{Classical and multilinear harmonic
  analysis. {V}ol. {II}}, Cambridge Studies in Advanced Mathematics, vol. 138,
  Cambridge University Press, Cambridge, 2013. \MR{3052499}

\bibitem{MuscaluTaoThiele}
Camil Muscalu, Terence Tao, and Christoph Thiele, \emph{{$L^p$} estimates for
  the biest. {II}. {T}he {F}ourier case}, Math. Ann. \textbf{329} (2004),
  no.~3, 427--461. \MR{2127985}

\bibitem{OberlinSeegerTaoThieleWright}
Richard Oberlin, Andreas Seeger, Terence Tao, Christoph Thiele, and James
  Wright, \emph{A variation norm {C}arleson theorem}, J. Eur. Math. Soc. (JEMS)
  \textbf{14} (2012), no.~2, 421--464. \MR{2881301}

\bibitem{RubioDeFrancia}
Jos\'e~L. Rubio~de Francia, \emph{A {L}ittlewood-{P}aley inequality for
  arbitrary intervals}, Rev. Mat. Iberoamericana \textbf{1} (1985), no.~2,
  1--14. \MR{850681}

\bibitem{Silva}
Prabath Silva, \emph{Vector-valued inequalities for families of bilinear
  {H}ilbert transforms and applications to bi-parameter problems}, J. Lond.
  Math. Soc. (2) \textbf{90} (2014), no.~3, 695--724. \MR{3291796}

\bibitem{Sjoelin}
Per Sj\"{o}lin, \emph{A note on {L}ittlewood-{P}aley decompositions with
  arbitrary intervals}, J. Approx. Theory \textbf{48} (1986), no.~3, 328--334.
  \MR{864755}

\bibitem{Soria}
Fernando Soria, \emph{A note on a {L}ittlewood-{P}aley inequality for arbitrary
  intervals in {${\bf R}^2$}}, J. London Math. Soc. (2) \textbf{36} (1987),
  no.~1, 137--142. \MR{897682}

\bibitem{Thiele_book}
Christoph Thiele, \emph{Wave packet analysis}, CBMS Regional Conference Series
  in Mathematics, vol. 105, Published for the Conference Board of the
  Mathematical Sciences, Washington, DC; by the American Mathematical Society,
  Providence, RI, 2006. \MR{2199086}

\end{thebibliography}
\bibliographystyle{amsplain}

\end{document}